\DeclareMathAlphabet{\mathpzc}{OT1}{pzc}{m}{it}
\theoremstyle{plain}
\newtheorem{thm}{Theorem}[section]
\newtheorem{cor}[thm]{Corollary}
\newtheorem{lem}[thm]{Lemma}
\newtheorem*{maintheorem*}{Main Theorem}
\newtheorem*{thm*}{Theorem}
\newtheorem*{thma*}{Theorem A}
\newtheorem*{thmaa*}{Theorem A'}
\newtheorem*{thmb*}{Theorem B}
\newtheorem*{thmo*}{Theorem 1.1}
\newtheorem*{thmc*}{Theorem C}
\newtheorem*{thmd*}{Theorem D}
\newtheorem*{thmf*}{Theorem 4.1}
\newtheorem*{remark*}{Remark}
\newtheorem*{conjecture*}{Conjecture}
\newtheorem*{prop*}{Proposition}
\newtheorem*{lem*}{Basic Lemma}
\theoremstyle{definition}
\newtheorem*{proofc*}{Proof of Theorem C}
\newtheorem{definition}[thm]{Definition}
\newtheorem{remark}[thm]{Remark}
\def\bbr{\mathbb{R}}
\def\vare{\varepsilon}
\def\d{\partial}
\def\p{\prod_{\nu\in S}}
\def\h{\hspace{1mm}}
\def\d{\partial}
\def\g{\nabla}
\def\p{\prod_{\nu\in S}}
\def\M{\mathcal{M}}
\def\p{\mathcal{P}}
\def\h{\hspace{1mm}}
\def\M{\mathcal{M}}
\def\ld{\lambda}
\def\defeq{\mathrel{\mathop:}=}
\def\sideremark#1
\begin{document}

\title{Elliptic equations with singular BMO coefficients in Reifenberg domains} 

\begin{abstract}
$W^{1, p}$ estimate for the solutions of elliptic equations whose coefficient matrix can have large jump along the boundary of subdomains is obtained. The principal coefficients are supposed to be in the John-Nirenberg space with small BMO seminorms. The domain and subdomains are Reifenberg flat domains and moreover, it has been shown that the estimates are uniform with respect to the distance between the subdomains. 
\end{abstract}


\author{Ko Woon Um}
\address{Mathematics Department, The University of Iowa, 14 Maclean Hall, Iowa City, Iowa 52242.}

\maketitle

\section{Introduction}

We consider the following Dirichlet problem for the divergence form elliptic equation
\begin{equation}~\label{div}
\begin{cases}-(a_{ij}u_{x_j})_{x_i} = -\mbox{div}( A (x)\g u(x)) =  \mbox{div} f  = (f^i)_{x_i} \h\h \mbox{in}\h \Omega  \\ \hspace{1.7cm} u = 0 \h\h\h\h\h\h\h\h\mbox{on} \h \d \Omega
\end{cases}\end{equation}
where $\Omega$ is an open and bounded subset of $\bbr^n$.
Throughout this paper we assume that the $n \times n$  matrix  $A=\sum_{i=0}^{i=K}A^i\chi_{\Omega^i}$ is defined on $\bbr^n$ where $ \Omega^1, \dots, \Omega^K$ are open subsets of $\Omega$ with flat boundary (see definition \ref{flatdefinition}), $  \Omega^0 \defeq {\Omega\setminus\cup_{i=1}^{i=K}\Omega^i} $ and $A^i$'s for $i=0,\dots,K$ are in the John-Nirenberg space BMO \cite{JN61} of the functions of bounded mean oscillation with small BMO seminorms.

This problem arises from the underground water flow through composite media with closely spaced interfacial boundaries, by which the coefficient matrix $A$ has discontinuity across the boundaries of subdomains. There have been many results to prove $ C^{1, \alpha} $ regularity for a weak solution in \cite{LV}, \cite{LN} and \cite{AW} by Y. Li, L. Nirenberg, M. Vogelius, F. Almgren and L. Wang. In this paper, I proved $ W^{1,p} $ regularity for Elliptic Dirichlet problem with singular coefficient matrix $A$ under some necessary conditions.

\begin{definition}\label{deltaRvanishing}(Small BMO seminorm Assumption) We say that the matrix $A$ of coefficients is $(\delta, R)-vanishing $ in $\Omega$ if
$$\sup_{0<r\leq R} \sup_{x \in \mathbb{R}^n} \sqrt{\frac{1}{|B_r|}\displaystyle \int_{B_r(x) \cap \Omega} |A(y)-\bar{A}_{B_r(x)\cap \Omega}|^2} dy \leq \delta.$$
\end{definition}

\begin{definition}\label{flatdefinition}(Reifenberg Flat Domain Assumption) We say that a domain $\Omega$ is $(\delta, R)$-{\it Reifenberg flat} if for every $x \in \d \Omega$ and every $r \in (0,R]$, there exists orthonormal coordinate system $(y_1, \dots, y_n)$ with origin at $x$ so that in that coordinate system
$$B_{r}(0) \cap \{y_n > r\delta\} \subset \Omega,$$
$$B_{r}(0) \cap \{y_n < -r\delta\} \subset \Omega^c.$$
\end{definition}

From this definition, we can see that if a domain $\Omega$ is $(\delta, R)-Reifenberg \h flat$, then for any $x \in \d \Omega$ and every $r \in (0, R]$, there exists an $(n-1)$-dimensional plane $\p(x,r)$ such that
$$ \frac{1}{r} D[\d\Omega \cap B_r(x) , \p(x,r) \cap B_r(x) ] \leq \delta,$$ where $D$ denotes the Hausdorff distance; namely,
$$D[A, B] = \sup \{{\rm{dist}}(a, B) : a \in A \} + \sup \{{\rm{dist}}(b, A) : b \in B \}.$$

We will get $W^{1,p}$ estimate for the classical weak solution of a divergence form elliptic equation (\ref{div}). The following is the definition for a weak solution.

\begin{definition}
 Let $1 < p,q<\infty$, $\frac{1}{p} + \frac{1}{q} = 1.$ Then a weak solution of (\ref{div}) is a function $u \in W_{0}^{1,p}(\Omega)$ such that
$$\displaystyle \int_{\Omega} A \g u \g \varphi dx = - \displaystyle \int_{\Omega} f \g \varphi dx \h\h\h \forall \varphi \in W_{0}^{1,q}(\Omega).$$
\end{definition}

The following is the main result of this thesis.

\begin{thm}~\label{main}Let $p$ be a real number $ 1 < p < \infty.$ Then there is a small $\delta= \delta(\Lambda, p , n, R)> 0 $ so that for all $\Omega = \cup_{i=0}^{i=K} \Omega^i$ where $\Omega^0 \defeq {\Omega\setminus\cup_{i=1}^{i=k}\Omega^i}$ and $\Omega$ and disjoint subdomains $\Omega^i$'s for $i=1,\dots,K$ are $(\delta, R)$-Reifenberg flat, for all  $A=\sum_{i=0}^{i=K}A^i\chi_{\Omega^i}$ where $A^i$'s are  $(\delta, R)$-vanishing in $ \Omega^i$ and  uniformly elliptic for $i=0,\dots,K$, and for all $f$ with $f \in L^p(\Omega, \bbr^n),$ the Dirichlet problem (\ref{div}) has a unique weak solution with the estimate
\begin{equation}~\label{result} \displaystyle \int_{\Omega}|\g u |^p dx \leq C \displaystyle \int_{\Omega}|f|^p dx,\end{equation}
where the constant $C$ is independent of $u$ and $f$.
\end{thm}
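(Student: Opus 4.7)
The plan is to follow the Caffarelli--Peral / Byun--Wang strategy, obtaining the $L^p$ bound on $\nabla u$ via a maximal function / good-$\lambda$ argument that rests on local $L^2$ comparison estimates with reference problems whose gradients are uniformly bounded. Existence and uniqueness (and the $W^{1,2}$ estimate) come for free from Lax--Milgram and uniform ellipticity, so the real content is the higher integrability. By duality and a standard density argument it suffices to treat $p>2$; by scaling and a partition of unity covering $\Omega$, the problem reduces to establishing a level-set decay estimate for the Hardy--Littlewood maximal function $\mathcal{M}(|\nabla u|^2)$ on balls of radius at most $R$.

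The technical heart of the argument is a family of comparison estimates that cover three scenarios around any base point $x_0$ and scale $r\le R$. First, if $B_{8r}(x_0)$ does not meet any interface $\partial\Omega^i$ or the outer boundary $\partial\Omega$, compare $u$ on $B_r$ with the solution $v$ of $-\mathrm{div}(\bar A\,\nabla v)=0$ in $B_{2r}$ with $v=u$ on $\partial B_{2r}$, where $\bar A$ is the average of $A$ on $B_{2r}$; classical interior estimates give $\|\nabla v\|_{L^\infty(B_r)}^2\lesssim \fint_{B_{2r}}|\nabla u|^2$. Second, if $B_{8r}(x_0)$ meets $\partial\Omega$ but no interior interface, Reifenberg flatness lets us flatten the boundary and compare with $v$ solving the frozen-coefficient problem in a half-ball with zero Dirichlet data, for which the gradient is again Lipschitz up to the boundary. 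Third, and this is the new case, if $B_{8r}(x_0)$ crosses one (or several) interfaces $\partial\Omega^i$, we freeze the coefficient block-wise to obtain a piecewise-constant coefficient matrix and a piecewise flat interface; the reference problem is then exactly the transmission problem studied by Li--Nirenberg and Li--Vogelius, whose solutions enjoy piecewise $C^{1,\alpha}$ estimates with a uniform Lipschitz bound on $\nabla v$. In each scenario the comparison yields
\[
\fint_{B_r}|\nabla u-\nabla v|^2\,dx
 \;\le\; C\,\delta^{2\sigma}\fint_{B_{8r}}|\nabla u|^2\,dx
 \;+\; C\fint_{B_{8r}}|f|^2\,dx,
\]
for some $\sigma>0$ and $C$ depending only on the ellipticity constant, the dimension, and $p$, but crucially \emph{not} on the distance between the subdomains $\Omega^i$.

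Once this comparison estimate is in hand, the remainder is standard Vitali covering machinery. One shows that for any $\lambda>\lambda_0$,
\[
\bigl|\{\mathcal{M}(|\nabla u|^2)>N^2\lambda\}\cap\Omega\bigr|
 \;\le\; \varepsilon\,\bigl|\{\mathcal{M}(|\nabla u|^2)>\lambda\}\cap\Omega\bigr|
 \;+\; \bigl|\{\mathcal{M}(|f|^2)>c\lambda\}\cap\Omega\bigr|
\]
with $\varepsilon=\varepsilon(\delta)$ as small as desired when $\delta$ is chosen small relative to $p$. Multiplying by $\lambda^{p/2-1}$ and integrating over $\lambda$, the absorption argument of Caffarelli--Peral upgrades the $L^2$ bound to the $L^p$ bound, finishing the proof by the maximal inequality applied to $|\nabla u|^p\in L^1$.

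The main obstacle is the third comparison case, that is, showing that the transmission reference problem produces a Lipschitz gradient with a bound uniform in how close different interfaces $\partial\Omega^i,\partial\Omega^j$ may be. Naively one would lose control as two interfaces almost touch, but Reifenberg flatness of each $\Omega^i$ forces the interfaces to look like disjoint or nearly parallel hyperplanes locally, and the Li--Vogelius estimate, applied block by block and chained, gives a constant that does not degenerate. Making this chaining rigorous, together with verifying that the flattening change of variables distorts the ellipticity and the BMO norm by a controlled amount in the Reifenberg constant $\delta$, is the most delicate part; the rest is a careful but routine adaptation of existing $W^{1,p}$ technology.
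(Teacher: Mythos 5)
Your overall outline---Lax--Milgram for existence, reduction to $p>2$ by duality, a good-$\lambda$ level-set estimate for $\mathcal{M}(|\nabla u|^2)$ proved via Vitali covering and local comparison with frozen-coefficient reference problems---matches the paper's strategy. But the core difficulty, which you correctly locate in your last paragraph, is handled quite differently, and as written your resolution of it has a genuine gap.

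Your proposal for the interface case is to compare $u$ against the solution of a transmission problem with \emph{several} flat interfaces and piecewise-constant coefficients, invoke the Li--Vogelius/Li--Nirenberg piecewise $C^{1,\alpha}$ theory for a uniform Lipschitz bound on $\nabla v$, and then claim that ``chaining'' these estimates block by block gives constants that do not degenerate as interfaces approach one another. This chaining is never made precise, and it is precisely where the issue lives: Li--Vogelius give constants independent of the interfacial distance, but their setting is a fixed configuration of $C^{1,\alpha}$ subdomains and the result is not a priori a local $L^\infty$ gradient estimate on a ball that straddles several nearly parallel hyperplanes at arbitrary positions and orientations. Nothing in your write-up shows that when you freeze the Reifenberg-flat interfaces to hyperplanes, the resulting multi-interface configuration sits inside the class Li--Vogelius actually treat, nor what ``chaining'' means quantitatively. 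This is not a routine verification.

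The paper avoids this entirely. Its reference problem is always the \emph{single} flat interface: Lemma \ref{dvbound} proves $\|\nabla v\|_{L^\infty(B_{1/2})}\le C\|v\|_{L^2(B_1)}$ for $\mathrm{div}(\bar A\nabla v)=0$ with $\bar A$ piecewise constant across one hyperplane $\{x_n=a\}$, by tangential difference quotients, De Giorgi--Nash, and Sobolev trace theory---no citation to transmission theory is needed. The multi-interface case is then handled geometrically rather than analytically: Lemma \ref{notmore3} shows that at most two disjoint half-spaces (hence at most three subdomains) can meet a concentric ball of half the radius, and Lemma \ref{bigducontr} dichotomizes: if two interfaces are \emph{close} inside $B_1$, a Harnack-inequality argument applied to the distance function between the two (nearly parallel) approximating hyperplanes shows they can be absorbed into a single flat interface with a slightly larger $\delta$, reducing to Lemma \ref{bigducontrol1}; if they are \emph{far apart}, one covers $B_1$ by balls of radius comparable to the separation $\gamma_0$, each of which meets at most two subdomains, and runs Corollary \ref{anyscaleball} at that scale, with the price of a factor $\lambda_i\sim\gamma_0^{-n}$ in the threshold (which is harmless since $\gamma_0$ is a fixed constant once $\delta$ is chosen). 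This reduction is the idea your proposal is missing: you never actually need a multi-interface reference problem, because close interfaces collapse to one and distant interfaces decouple at a smaller scale.

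A second, smaller difference: the paper establishes the comparison $\int_{B_{4/3}}|\nabla(u-v)|^2\le\varepsilon^2$ not by a direct energy estimate with the frozen coefficient, but by a compactness/contradiction argument (Lemmas \ref{uclosev}, \ref{duclosedv}), extracting weakly convergent subsequences and identifying the limit equation. Your energy-comparison formulation with $C\delta^{2\sigma}$ is the Caffarelli--Peral form and could in principle be substituted, but you would still need the $L^\infty$ gradient bound on the reference solution, which brings you back to the gap above. To complete your argument you would need to either (i) prove the Lipschitz bound for the multi-interface flat transmission problem directly with constants uniform in the interfacial separations, or (ii) adopt something like the paper's geometric dichotomy so that only the single-interface bound is ever invoked.
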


Let us just mention here that the constant $C$ above does not depend on the distance between the subdomains, which allows the domains to touch each other.

Before our work, in the parabolic case, Fred Almgren and Lihe Wang proved the  $C^{1, \alpha}$ estimates for heat flows across an interface under reasonable further assumption on $A$ in~\cite{AW}. If $u $ is a weak solution of
\begin{equation}~\label{AWsingu}
\begin{cases} B(x) u_t = \mbox{div} (A(x) \g u) + \mbox{div} F   \hspace{3mm} \mbox{in}\h\h \Omega  \\ \hspace{1.0cm} u = 0 \hspace{3.2cm}\mbox{on} \h\h \d \Omega
\end{cases}\end{equation}
where $ B(x)$ and $A(x)$ have singularity along the H\"{o}lder continuous boundaries of subdomains, they proved $|  \g u(x,s) -\g u(y,s) | \leq C |x-y|^{\alpha}$ and $ |  \g u(y,s) -\g u(y,t) | \leq C |s-t|^{\frac{\alpha}{2}}$.\\

In the elliptic case, in \cite{LV}, Y. Li and M. Vogelius considered an elliptic equation
\begin{equation} \mbox{div}(A \g u)= h + \mbox{div} (g)\end{equation}
on a bounded domain $D$ which has a finite number of disjoint subdomains $D_m$ with $ C^{1, \alpha} $ boundary and allowed the matrix $A$ to have discontinuity across the boundaries. They proved a $ C^{1, \alpha}$ regularity for the solution  under reasonable H\"{o}lder continuity assumptions on $A$, $h$ and $g_i$. Later in ~\cite{LN}, Y. Li and L. Nirenberg extended the result in \cite{LV} to general second order elliptic systems with piecewise smooth coefficients, which arises in elasticity. 


In chapter 2, we state preliminary notations, definitions and assumptions throughout this paper. Mathematical background and main tools are given in chapter 3. In the first section of chapter 4, we discuss the interior $W^{1,p}$ regularity  for a weak solution of (\ref{div}) and in the second section, a global $W^{1,p}$ regularity is derived.

\section{Definitions and Notations}
\subsection{Geometric Notation}

\begin{itemize}
\item[(1)] A typical point in $\bbr^n$ is $x = (x', x_n)$. A typical point in $\bbr^n \times \bbr$ is $(x,t) = (x', x_n, t)$.\\
\item[(2)] $\bbr^n_+ = \{ x \in \bbr^n ; x_n > 0 \}$ and $\bbr^n_- = \{ x \in \bbr^n ; x_n < 0 \}.$\\
\item[(3)] $B_r = \{ x \in \bbr^n : |x| < r \}$ is an open ball in $ \bbr^n $ centered at 0 and radius $ r >0$, $ B_r(x) = B_r + x,$  $ B_r^+ = B_r \cap \{ x_n > 0\}$, $B_r^+(x) = B_r^+ + x,$ $ T_r = B_r \cap \{ x_n = 0\}, $ and $T_r(x) = T_r + x. $\\
\item[(4)] $\Omega_r = \Omega \cap B_r, $ $ \Omega_r(x) = \Omega \cap B_r(x) $.\\
\item[(5)] $ \d \Omega_r $ is the boundary of $\Omega_r$, $ \d_w \Omega_r = \d \Omega \cap B_r $ is the wiggled part of $ \d\Omega_r, $ and $ \d_c \Omega_r = \d \Omega_r \backslash \d_w \Omega_r  $ is the curved part of $\d\Omega_r.$\\
\item[(6)] $\p_i^{\delta}(y)$ is the $(n-1)$ dimensional plane which is translated hyperplane at $y \in \d \Omega^i$ by $\delta$ along the normal direction toward $\Omega^i$.\\
\end{itemize}

\subsection{Matrix of Coefficients}
\begin{definition} We say that $A$ is $uniformly \h elliptic$ if there exists a positive constant $\Lambda $ such that
$$\Lambda^{-1} | \xi|^2 \leq A(x) \xi \cdot \xi \leq \Lambda | \xi|^2 \h\h\h\h \mbox{a.e.} \h\h\h\h x \in \bbr^n, \forall \xi \in \bbr^n. $$
\end{definition}

\begin{itemize}
\item[(1)] We write $A= (a_{ij})$ to mean an $ n \times n$ matrix with the (i, j)-th entry $a_{ij}$.
\item[(2)] $|A| = \sqrt{(A:A)} = \sqrt{\sum_{i,j=1}^{n} a_{ij}^2}$  and $\| A \|_{\infty} = \sup_{y} |A(y)|$.
\item[(3)] $$\bar{A}_{\Omega} = \frac{1}{|\Omega|} \displaystyle \int_{\Omega} A(x) dx$$ is the average of $A$ over $\Omega$.
\item[(4)] $A$ is supposed to be $A = \sum_{i = 0}^K A^i \chi{\Omega^i}$ where $A^i$'s are assumed to be uniformly elliptic and $(\delta, R)- vanishing$ on $\Omega^i$ for any $i = 0, \dots, K$.
\item[(5)] $\tilde{A}_{B_r} \defeq \sum_{i = 0}^K \bar{A^i}_{\Omega^i_r} \chi_{\Omega^i_r}$.
\end{itemize}

\subsection{Notation for Derivatives}
\begin{itemize}
\item[(1)] $\g u = (u_{x_1}, \dots, u_{x_n}) $ is the gradient of $u$.
\item[(2)]  Multiindex Notation:
\begin{itemize}
\item[(a)]A vector of the form  $\alpha= (\alpha_1, \dots, \alpha_n)$, where each component $\alpha_i$ is a nonnegative integer, is called a multiindex of order
    $$|\alpha| = \alpha_1 + \cdots + \alpha_n.$$
\item[(b)]Given a multiindex $\alpha= (\alpha_1, \dots, \alpha_n)$, define 
$$D^{\alpha} u(x) \defeq \frac{\d^{|\alpha|} u(x)}{\d x_1^{\alpha_1}\cdots \d x_n^{\alpha_n}} = \d_{x_1}^{\alpha_1} \cdots \d_{x_n}^{\alpha_n}u.$$
\end{itemize}
\item[(3)]  $$ \mbox{div}(f) = \sum_{i =1}^{n}(f^i(x))_{x_i}$$
is the divergence of $f = (f^1, \dots, f^n)$.
\end{itemize}

\subsection{Notation for estimates}
We employ the letter $C$ to denote a universal constant usually depending on the dimension, ellipticity and the geometric quantities  of $\Omega$.

\subsection{Notation for Function and Function Spaces}
\begin{itemize}
\item[(1)] If $f : \Omega \rightarrow \bbr^n$, we write $f(x) = (f^1(x), \dots, f^n(x))$ for $ x \in \Omega$.
\item[(2)] $$\bar{f}_{\Omega} = \frac{1}{|\Omega|} \displaystyle \int_{\Omega} |f(x)| dx$$ is the average of $f$ over $\Omega$.
\item[(3)] $C_{0}^{\infty}(\Omega)$ = $\{ u \in C^{\infty}(\Omega) : u \mbox{ has compact  support in }  \Omega\}.$
\item[(4)] $L^p(\Omega) = \{ u : \|u\|_{L^p(\Omega)} < \infty \},$ where $\|u \|_{L^p(\Omega)} = \left(\displaystyle \int_{\Omega}|u|^p dx \right)^{\frac{1}{p}} $ for any $1 \leq p  < \infty. $
\item[(5)] $L^{\infty}(\Omega) = \{ u : \|u\|_{L^{\infty}(\Omega)} < \infty \},$ where $\|u \|_{L^{\infty}(\Omega)} =$ ess $\sup_{\Omega }|u |.$
\item[(6)] Let $u$ and $v$ be two locally integrable functions. Then we say that $v$ is the $i^{th}$ weak derivative of $u$ if for any $ \varphi \in  C_{0}^{\infty}(\Omega),$
$$\displaystyle \int_{\Omega} u \frac{\d \varphi}{\d x_i} dx = - \displaystyle \int_{\Omega} v \varphi dx.$$
We denote by $\frac{\d u}{\d x_i} $ the $i^{th}$ weak derivative of $u$.  Then we say that $u$ is in the space $W^{1,p}(\Omega) $ if $u$ has weak derivatives $\frac{\d u }{\d x_i} \in L^p(\Omega)$ and $ u \in L^p(\Omega)$. $ W^{1,p}$ is a Banach space equiped with the norm
$$\left( \|u\|^{p}_{L^p(\Omega)} + \sum_i \|\frac{\d u }{\d x_i}\|^{p}_{L_p(\Omega)} \right)^{\frac{1}{p}}.$$ In the case $ p = 2, H^1 = W^{1,2}$ is a Hilbert space. We say $ u \in W^{1,p}_{0}(\Omega)$ if $ Eu \in W^{1,p}(\bbr^n), $ where $ Eu $ is the 0-extension of $u$ to $\bbr^n$.
\end{itemize}


\section{Preliminary tools and mathematical background}
In this chaper we recall standard facts from measure theory and functional analysis which will be needed in the sequel. We will present the proof for less familiar facts.

\subsection{The Hardy-Littlewood Maximal Function and Related Mathematical Background}
\begin{lem}\label{lplevel}\cite{CC95} Suppose that $f$ is a nonnegative measurable function in a bounded domain $\Omega$. Let $\theta > 0$ and $m > 1$ be constants. Then for  $ 0 < p < \infty,$
$$f  \in L^p(\Omega) \h\h\h\h  iff \h\h\h\h S = \sum_{k \geq 1} m^{kp} | \{ x \in \Omega : f(x) >\theta m^k\}| < \infty$$
and
$$ \frac{1}{C} S \leq \|f\|_{L^p(\Omega)}^{p} \leq C (|\Omega| + S),$$
where $C > 0$ is a constant depending only on $\theta, m $ and $p$.

\end{lem}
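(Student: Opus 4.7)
The plan is to use the distribution function (layer cake) identity
\[
\|f\|_{L^p(\Omega)}^p \;=\; p\int_0^\infty t^{p-1}\mu(t)\,dt, \qquad \mu(t)\defeq |\{x\in\Omega : f(x) > t\}|,
\]
and then to recognize the series $S$ as essentially a Riemann sum for $\int_\theta^\infty t^{p-1}\mu(t)\,dt$ with respect to the geometric partition $\{\theta m^k\}_{k\ge 0}$. Because $f\ge 0$ and $\Omega$ is bounded, $\mu$ is monotone decreasing and bounded above by $|\Omega|$, which is what will absorb the low-$t$ part of the integral.

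The first step is to split $\int_0^\infty = \int_0^\theta + \int_\theta^\infty$ and observe that $\int_0^\theta t^{p-1}\mu(t)\,dt \le |\Omega|\int_0^\theta t^{p-1}\,dt = \tfrac{\theta^p}{p}|\Omega|$. For the tail, I break $[\theta,\infty) = \bigcup_{k\ge 0}[\theta m^k,\theta m^{k+1}]$ and use that on each such interval $t^{p-1}$ lies between $(\theta m^k)^{p-1}$ and $(\theta m^{k+1})^{p-1}$ while $\mu(t)$ lies between $\mu(\theta m^{k+1})$ and $\mu(\theta m^k)$, the length of the interval being $\theta m^k(m-1)$. This yields the two-sided pointwise-on-intervals comparison
\[
\theta^p(m-1)\,m^{kp}\,\mu(\theta m^{k+1}) \;\le\; \int_{\theta m^k}^{\theta m^{k+1}} t^{p-1}\mu(t)\,dt \;\le\; \theta^p(m-1)\,m^{p-1}\,m^{kp}\,\mu(\theta m^k).
\]

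For the upper estimate I sum the right-hand side over $k\ge 0$, which is $C(\theta, m, p)\bigl(\mu(\theta) + S\bigr)\le C(|\Omega|+S)$ since the $k=0$ term is controlled by $|\Omega|$ and the $k\ge 1$ terms are exactly $S$ (up to the constant factor). Adding the $[0,\theta]$ piece, also controlled by $|\Omega|$, gives $\|f\|_{L^p(\Omega)}^p \le C(|\Omega|+S)$. For the lower estimate I sum the left-hand side over $k\ge 0$ and reindex $j = k+1$: the result is $\theta^p(m-1)m^{-p}\sum_{j\ge 1} m^{jp}\mu(\theta m^j) = C\,S$, so $\|f\|_{L^p(\Omega)}^p \ge \tfrac{1}{C}S$. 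The equivalence $f\in L^p(\Omega) \iff S<\infty$ is then immediate from these two inequalities together with the finiteness of $|\Omega|$.

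There is no real obstacle here; the entire argument is a careful bookkeeping of constants depending on $\theta$, $m$, $p$. The one point that warrants a little attention is that the series in the lemma begins at $k=1$ while the natural dyadic decomposition of $[\theta,\infty)$ starts with the block $[\theta,\theta m]$; the $k=0$ block $\mu(\theta)$ is precisely why an additive $|\Omega|$ term is necessary on the upper bound but not on the lower.
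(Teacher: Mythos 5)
The paper does not actually prove Lemma~\ref{lplevel}; it cites \cite{CC95} and moves on. Your layer-cake argument is the standard proof of this equivalence (and is, in spirit, the one in the cited reference), so the overall strategy is correct and there is nothing to compare against in the paper itself.

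There is, however, a small slip in your two-sided comparison that you should fix before relying on it for the full range $0<p<\infty$. You obtained the lower bound
\[
\theta^p(m-1)\,m^{kp}\,\mu(\theta m^{k+1}) \;\le\; \int_{\theta m^k}^{\theta m^{k+1}} t^{p-1}\mu(t)\,dt
\]
by taking $t^{p-1}$ at the left endpoint and $\mu$ at the right endpoint. For $p<1$ the function $t^{p-1}$ is \emph{decreasing}, so its minimum on the block is at the \emph{right} endpoint, and the left-endpoint value is not a valid lower bound for the integrand. In fact, taking $\mu$ constant on the block, your claimed lower bound would require $(m-1)\le (m^p-1)/p$, which fails for $0<p<1$ (and is also visibly inconsistent with your upper bound constant $\theta^p(m-1)m^{p-1}$, which is then \emph{smaller} than the lower bound constant $\theta^p(m-1)$). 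The cleanest repair is to bound $\mu(t)\ge\mu(\theta m^{k+1})$ first and then integrate $t^{p-1}$ exactly, giving
\[
\int_{\theta m^k}^{\theta m^{k+1}} t^{p-1}\mu(t)\,dt \;\ge\; \mu(\theta m^{k+1})\cdot \frac{\theta^p\,(m^p-1)}{p}\, m^{kp},
\]
valid for every $p>0$; this changes only the constant, and the rest of your bookkeeping (reindexing $j=k+1$, absorbing $\mu(\theta)\le|\Omega|$ and the $[0,\theta]$ piece into the additive $|\Omega|$ term) goes through unchanged to yield both inequalities and the stated equivalence.
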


One of our main tools will be the Hardy-Littlewood maximal function. The maximal function controls the local behavior of a function in an analytical way.

\begin{definition}
For a locally integrable function $f$ on $\bbr^n$. Let
$$(\M f)(x) =  \sup_{r > 0} \frac{1}{|B_r(x)|} \displaystyle \int_{B_r(x)} |f(y)| dy$$
be the Hardy-Littlewood maximal function of $f$. We also define
$$\M_{\Omega} f = \M ( \chi_{\Omega} f)$$
if $f$ is not defined outside $\Omega.$
\end{definition}

\noindent
The basic theorem for the Hardy-Littlewood maximal function is the following:

\begin{thm}\label{pp11}\cite{S93} We have
\begin{itemize}
\item[(a)] If $f \in L^{p}(\bbr^n) $ with $p >1$, then $ \M f \in L^{p}(\bbr^n)$. Moreover, $$\| \M f \|_{L^{p}(\bbr^n)} \leq C\|f\|_{L^p(\bbr^n)}. $$
\item[(b)] If $f \in L^{1}(\bbr^n) $, then $$ |\{ x \in \bbr^n : (\M f) (x) > \ld \}|\leq \frac{C}{\ld}\|f\|_{L^1 (\bbr ^{n})}.$$
\end{itemize}
\end{thm}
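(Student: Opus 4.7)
The plan is to prove part (b) first by a Vitali-type covering argument, then derive part (a) by decomposing $f$ at level $\lambda$ and integrating the weak-type estimate via the layer-cake formula. This is the standard Calder\'on--Zygmund/Marcinkiewicz route, which avoids any appeal to machinery not already in the paper.

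For part (b), I would fix $\lambda>0$ and set $E_\lambda=\{x\in\bbr^n:(\M f)(x)>\lambda\}$. For each $x\in E_\lambda$ the supremum defining $\M f(x)$ exceeds $\lambda$, so there is a radius $r_x>0$ with
\[
\int_{B_{r_x}(x)}|f(y)|\,dy>\lambda\,|B_{r_x}(x)|.
\]
In particular $|B_{r_x}(x)|<\lambda^{-1}\|f\|_{L^1(\bbr^n)}$, so the radii $r_x$ are uniformly bounded. Given any compact $K\subset E_\lambda$, extract a finite subcover and then apply the Vitali/Wiener $5r$-covering lemma to obtain pairwise disjoint balls $B_{r_{x_1}}(x_1),\dots,B_{r_{x_N}}(x_N)$ whose $5$-fold dilates cover $K$. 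Then
\[
|K|\le\sum_{j=1}^N|B_{5r_{x_j}}(x_j)|=5^n\sum_{j=1}^N|B_{r_{x_j}}(x_j)|\le\frac{5^n}{\lambda}\sum_{j=1}^N\int_{B_{r_{x_j}}(x_j)}|f|\,dy\le\frac{5^n}{\lambda}\|f\|_{L^1(\bbr^n)},
\]
since the chosen balls are disjoint. Taking the supremum over compact $K\subset E_\lambda$ and using inner regularity of Lebesgue measure yields (b) with $C=5^n$.

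For part (a), the observation $|\M f(x)|\le\|f\|_{L^\infty}$ gives the trivial $L^\infty$ bound. To handle $1<p<\infty$, I would run the Marcinkiewicz interpolation argument by hand: for fixed $\lambda>0$ decompose
\[
f=f_1+f_2,\qquad f_1\defeq f\,\chi_{\{|f|>\lambda/2\}},\qquad f_2\defeq f\,\chi_{\{|f|\le\lambda/2\}}.
\]
Because $\M f_2\le\lambda/2$ pointwise and $\M$ is sub-additive, $\{\M f>\lambda\}\subset\{\M f_1>\lambda/2\}$, so by part (b) applied to $f_1\in L^1(\bbr^n)$ one gets
\[
|\{\M f>\lambda\}|\le\frac{2\cdot 5^n}{\lambda}\int_{\{|f|>\lambda/2\}}|f(y)|\,dy.
\]
Then I would integrate via the layer-cake formula and Fubini:
\begin{align*}
\|\M f\|_{L^p(\bbr^n)}^p
&=p\int_0^\infty\lambda^{p-1}|\{\M f>\lambda\}|\,d\lambda
\le 2\cdot 5^n\,p\int_{\bbr^n}|f(y)|\int_0^{2|f(y)|}\lambda^{p-2}\,d\lambda\,dy\\
&=\frac{2^p\,5^n\,p}{p-1}\,\|f\|_{L^p(\bbr^n)}^p,
\end{align*}
which is (a) with $C=C(n,p)$.

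The only genuinely subtle step is the covering argument used in (b); the rest is bookkeeping. The main obstacle is therefore justifying the Vitali selection carefully, in particular checking that the radii are bounded so the selection process terminates, and approximating $E_\lambda$ from inside by compact sets so that the finite Vitali lemma actually delivers the claimed measure estimate on the full level set. Everything else—open measurability of $E_\lambda$, sub-additivity of $\M$, the Fubini exchange in the layer-cake step—is routine.
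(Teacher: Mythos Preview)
Your argument is correct and is exactly the classical route: Vitali $5r$-covering for the weak $(1,1)$ bound, then the explicit Marcinkiewicz splitting at level $\lambda/2$ combined with the layer-cake formula for the strong $(p,p)$ bound. The computations check out, including the constant $2^p 5^n p/(p-1)$.

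There is nothing to compare against in the paper itself: Theorem~\ref{pp11} is stated with a citation to Stein~\cite{S93} and no proof is given. Your write-up is essentially the argument one finds in that reference (or any standard harmonic-analysis text), so it is entirely appropriate here. The only minor remark is that you could alternatively invoke Lemma~\ref{vitali} directly on the full cover $\{B_{r_x}(x)\}_{x\in E_\lambda}$ (the radii being bounded, as you noted) rather than passing through compact exhaustion, since that lemma is already stated in the paper for arbitrary measurable $E$; but your compact-subset version is equally valid and perhaps cleaner.
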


\noindent 
Here $C$ depends only on $p$ and the dimension $n$. $(a)$ is called a strong p-p estimate and $(b)$ is called a weak 1-1 estimate. This theorem says that the measure of $ \{ x :  |\M f(x) | > \delta \} $ decays roughly as  the measure of  $ \{ x : |f(x)| > \delta \} $  does. Since the value of $L^p$ function at a particular point does not make good sense in a qualitative way even though the point is a Lebesgue point, we will employ the Hardy-Littlewood maximal function, which makes sense at a certain point. Let us also remark that the maximal funciton is invariant with respect to scaling. Hence $ |\{ x : |\M f(x) | > \delta \} | $ is more stable and geometric object.

\subsection{Vitali Covering Lemma}
Another main tool is the Vitali covering lemma:

\begin{lem}\label{vitali}\cite{S93} Let $ E $ be a measurable set. Suppose that a class of balls $ B_\alpha $ covers $E$:
$$E \subset \bigcup_{\alpha} B_{\alpha}.$$
Suppose the radius of $ B_{\alpha} $ is bounded from above. Then there exist disjoint $\{ B_{\alpha_i} \} ^{\infty}_{i = 1} \subset \{ B_{\alpha} \} _{\alpha} $ such that
$$E \subset \bigcup_{i} 5B_{\alpha_i},$$
where $5B_{\alpha_i}$ is the ball with five times the radius of $B_{\alpha_i}$ and the same center. Consequently, we have
$$|E| \leq 5^n \sum_{i} |B_{\alpha_i}|.$$
\end{lem}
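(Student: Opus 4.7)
The plan is to run a greedy, largest-radius-first selection on the covering family and then convert the maximality of the selection into a $5\times$ containment via the triangle inequality. To organize sizes, I would first set $R \defeq \sup_\alpha r(B_\alpha)$, which is finite by hypothesis, and partition the balls into the dyadic radius classes
\[
F_k \defeq \{ B_\alpha : R\cdot 2^{-k} < r(B_\alpha) \le R\cdot 2^{-k+1} \}, \qquad k = 1, 2, \dots.
\]

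Within $F_1$ I would choose (by Zorn's lemma) a maximal pairwise disjoint subfamily $G_1$; separability of $\bbr^n$ forces $G_1$ to be at most countable. Having constructed $G_1, \dots, G_{k-1}$, I would take $G_k \subset F_k$ to be maximal among subfamilies whose members are pairwise disjoint and also disjoint from every ball already chosen in $G_1 \cup \cdots \cup G_{k-1}$. Enumerating $\bigcup_k G_k$ as $\{B_{\alpha_i}\}_{i=1}^{\infty}$ produces a countable pairwise disjoint family. The key geometric claim is that every original ball $B_\alpha \in F_k$ meets some selected $B_{\alpha_i} \in \bigcup_{j\le k} G_j$ with $r(B_{\alpha_i}) > r(B_\alpha)/2$: if $B_\alpha$ itself was selected, this is immediate, and otherwise maximality of $G_k$ supplies a previously chosen $B_{\alpha_i}$ intersecting $B_\alpha$, with $r(B_{\alpha_i}) > R\cdot 2^{-k} \ge r(B_\alpha)/2$ by construction. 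The triangle inequality then yields $B_\alpha \subset 5 B_{\alpha_i}$, since any $y \in B_\alpha$ lies within $r(B_\alpha) + (r(B_\alpha) + r(B_{\alpha_i})) \le 5\, r(B_{\alpha_i})$ of the center of $B_{\alpha_i}$.

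Chaining these containments gives $E \subset \bigcup_\alpha B_\alpha \subset \bigcup_i 5 B_{\alpha_i}$, and subadditivity of Lebesgue measure then delivers $|E| \le \sum_i |5 B_{\alpha_i}| = 5^n \sum_i |B_{\alpha_i}|$, as claimed. The main obstacle, more conceptual than technical, is arranging the selection so that every non-selected ball is forced to meet a selected ball of at least half its own radius; this is precisely what motivates the dyadic partition by radius and the level-by-level greedy choice. A single maximal disjoint subfamily of the whole cover would offer no such size control, and the blow-up factor would have to be replaced by an unbounded one.
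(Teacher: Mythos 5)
Your proof is correct and is the standard dyadic-greedy argument for the Vitali covering lemma with constant $5$. The paper itself does not prove this lemma but cites it directly from Stein's book \cite{S93}, and the argument you give is precisely the one found there: the key points — the dyadic partition by radius so that any unselected ball of generation $k$ must meet a previously selected ball of radius strictly greater than half its own, and the triangle inequality computation $2r(B_\alpha) + r(B_{\alpha_i}) < 5\,r(B_{\alpha_i})$ — are both handled correctly, and the reduction to a countable disjoint subfamily via separability (or $\sigma$-finiteness) is sound.
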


For the discussion of interior $W^{1,p}$ regularity, we will use the modified version of the Vitali covering lemma:

\begin{lem}\label{insidevitali}\cite{W03} Assume that $C$ and $D$ are measurable sets, $ C \subset D \subset B_1$, and that there exists an $ \vare > 0, $ such that $$|C| < \vare|B_1|,$$
and for all $ x \in B_1$ and for all $ r \in (0, 1] $ with  $ | C \cap B_r(x) | \geq  \vare |B_r(x) |, $
$$B_r(x) \cap B_1 \subset D.$$
Then $$|C| \leq 10^n \vare |D|.$$
\end{lem}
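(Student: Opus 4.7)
My plan is a Calderón--Zygmund stopping-time argument combined with the Vitali covering lemma (Lemma \ref{vitali}). The hypothesis $|C| < \varepsilon |B_1|$ pins the global density of $C$ below $\varepsilon$, while Lebesgue differentiation forces the infinitesimal density to equal $1$ at almost every point of $C$; the remaining hypothesis links the intermediate scales to $D$. I extract a Vitali family of stopping balls on which the density of $C$ equals $\varepsilon$, and exploit the three density bounds on the small, large, and borderline scales respectively.

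First, fix a Lebesgue point $x$ of $\chi_C$. The continuous function $r \mapsto |C \cap B_r(x)|/|B_r(x)|$ tends to $1$ as $r \to 0^+$, while at $r = 1$ it is at most $|C|/|B_1| < \varepsilon$ (using translation invariance of Lebesgue measure, $|B_1(x)| = |B_1|$). By continuity and the intermediate value theorem I define $r_x \in (0,1)$ as the supremum of $r \in (0,1]$ for which the density is at least $\varepsilon$; the density at $r_x$ then equals exactly $\varepsilon$, and $|C \cap B_r(x)| < \varepsilon |B_r(x)|$ for every $r \in (r_x, 1]$. The second hypothesis now yields $B_{r_x}(x) \cap B_1 \subset D$. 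Applying Lemma \ref{vitali} to the family $\{B_{r_x}(x)\}_{x}$ (radii bounded by $1$), I extract a countable disjoint subfamily $\{B_i := B_{r_i}(x_i)\}$ with $C \subset \bigcup_i 5 B_i$ modulo a null set.

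Next I show the key density bound $|C \cap 5 B_i| \leq \varepsilon |5 B_i| = 5^n \varepsilon |B_i|$ for every $i$. If $5 r_i \leq 1$, this is the definition of $r_i$ applied at the radius $5 r_i > r_i$. If $5 r_i > 1$, then $|5 B_i| \geq |B_1|$, so $|C \cap 5 B_i| \leq |C| < \varepsilon |B_1| \leq \varepsilon |5 B_i|$. Summing over $i$ gives $|C| \leq \sum_i |C \cap 5 B_i| \leq 5^n \varepsilon \sum_i |B_i|$.

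The remaining obstacle is that each $B_i$ need not lie inside $B_1$, whereas the hypothesis controls only $B_i \cap B_1 \subset D$. I will verify the elementary geometric fact that for $x \in B_1$ and $r \in (0,1]$ one has $|B_r(x) \cap B_1| \geq 2^{-n} |B_r(x)|$: when $|x| \geq r/2$ the ball $B_{r/2}(x - (r/2) x/|x|)$ sits inside $B_r(x) \cap B_1$, while when $|x| < r/2$ the concentric ball $B_{r/2}(0)$ does. Combining this with the disjointness of $\{B_i\}$ and the inclusion $B_i \cap B_1 \subset D$ produces $\sum_i |B_i| \leq 2^n \sum_i |B_i \cap B_1| \leq 2^n |D|$, whence $|C| \leq 5^n \varepsilon \cdot 2^n |D| = 10^n \varepsilon |D|$. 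The single delicate point is this last geometric reduction; everything else is a routine stopping-time argument.
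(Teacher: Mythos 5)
Your proof is correct and follows essentially the same stopping-time plus Vitali covering route that the paper uses for the analogous Reifenberg-flat version, Lemma \ref{modifiedvitali} (for Lemma \ref{insidevitali} itself the paper simply cites \cite{W03} without proof). The two places where you are more explicit than the paper's proof of Lemma \ref{modifiedvitali} -- handling the borderline scale $5r_i > 1$ separately, and giving the direct geometric bound $|B_r(x)\cap B_1|\geq 2^{-n}|B_r(x)|$ for $x\in B_1$, $r\in(0,1]$ -- are both correct and correspond, respectively, to a step the paper leaves implicit and to the specialization $\delta=0$ of the paper's claim \eqref{flateffect}.
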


We will use another version of the Vitali covering lemma for the global estimate on a $ (\delta, 1)-$ Reinfenberg flat domain.

\begin{lem}~\label{modifiedvitali}\cite{BW03} Assume that $C $ and $D$ are measurable sets. $C \subset D \subset \Omega$ with $\Omega$ ($\delta,$ 1)- Reifenberg flat, and that there exists an $\vare > 0$ such that
\begin{equation}~\label{clessthanb}| C | < \vare | B_1| \end{equation}
and for all $ x \in B_1$ and for all $r \in (0, 1]$ with $|C \cap B_r(x)| \geq \vare | B_r(x)|$,
\begin{equation}~\label{brind}B_r(x) \cap \Omega \subset D.\end{equation}
Then $$|C| \leq  ( \frac{10}{1-\delta} )^n \vare |D|.$$
\end{lem}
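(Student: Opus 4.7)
The plan is to mimic the Calder\'on--Zygmund stopping-time proof of Lemma \ref{insidevitali}, inserting a density estimate for Reifenberg flat domains to account for the extra factor $(1-\delta)^{-n}$. First, I would use the Lebesgue differentiation theorem to find, for a.e. $x \in C$, a well-defined stopping radius $r_x$. At small $r$ the ratio $|C \cap B_r(x)|/|B_r(x)|$ tends to $1$, while at $r = 1$ condition $(\ref{clessthanb})$ forces it to be $< \vare$ (since $|C \cap B_1(x)| \leq |C| < \vare |B_1(x)|$). By continuity of the ratio in $r$, one extracts a largest $r_x \in (0, 1)$ with $|C \cap B_{r_x}(x)| = \vare|B_{r_x}(x)|$, at which scale $(\ref{brind})$ applies and gives $B_{r_x}(x) \cap \Omega \subset D$. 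Moreover, for every $r \in (r_x, 1]$ one has $|C \cap B_r(x)| < \vare |B_r(x)|$ strictly.

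Next, I would apply the Vitali lemma (Lemma \ref{vitali}) to $\{B_{r_x}(x)\}_{x \in C}$ to extract a countable disjoint subfamily $\{B_{r_i}(x_i)\}_i$ whose $5$-fold enlargements cover $C$, and estimate
$$|C| \leq \sum_i |C \cap B_{5r_i}(x_i)| \leq 5^n \vare \sum_i |B_{r_i}(x_i)|.$$
The last inequality uses the strict stopping inequality at scale $5r_i$ when $5r_i \leq 1$, and the crude bound $|C \cap B_{5r_i}(x_i)| \leq |C| < \vare|B_1| \leq 5^n \vare|B_{r_i}(x_i)|$ when $5r_i > 1$, so no separate truncation is needed.

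Third, I would convert $\sum_i|B_{r_i}(x_i)|$ into a bound on $|D|$ using the Reifenberg geometry. The disjoint sets $B_{r_i}(x_i) \cap \Omega$ are all contained in $D$, so $\sum_i |B_{r_i}(x_i) \cap \Omega| \leq |D|$; it remains to prove the density estimate
$$|B_{r_i}(x_i) \cap \Omega| \geq \left(\tfrac{1-\delta}{2}\right)^n |B_{r_i}(x_i)|.$$
This is where Reifenberg $(\delta, 1)$-flatness enters: if $\mathrm{dist}(x_i,\partial\Omega) \geq r_i/2$, then $B_{r_i/2}(x_i) \subset \Omega$ already suffices; otherwise I would pick $x_0 \in \partial\Omega$ within distance $r_i/2$ of $x_i$, use the hyperplane supplied by the $(\delta, 1)$-Reifenberg condition at $x_0$ with radius $r_i$, and shift $x_i$ by $r_i\delta$ along the interior-pointing normal to obtain a center $y_i$ with $B_{r_i(1-\delta)/2}(y_i) \subset B_{r_i}(x_i) \cap \Omega$. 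Assembling the three estimates gives $|C| \leq 5^n \vare \cdot (2/(1-\delta))^n |D| = (10/(1-\delta))^n \vare |D|$.

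The hard part will be the geometric density step: one must verify uniformly over $x_i \in \Omega$ and $r_i \in (0, 1]$ that Reifenberg flatness really produces an interior ball of radius $\sim r_i(1-\delta)/2$ inside $B_{r_i}(x_i) \cap \Omega$, and that the scale and the reference boundary point $x_0$ can always be chosen so that the hypotheses of Definition \ref{flatdefinition} apply. Once this geometric lemma is in hand, the rest of the argument is a routine assembly of the stopping construction with the standard Vitali covering lemma.
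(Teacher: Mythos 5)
Your proof takes essentially the same route as the paper's: a stopping-time construction producing the exit radius $r_x$, an application of the Vitali covering lemma to extract a disjoint subfamily whose $5$-dilates cover $C$, and the Reifenberg density estimate $|B_r(x)\cap\Omega|\ge\big(\tfrac{1-\delta}{2}\big)^n|B_r(x)|$ to pass from $\sum_i|B_{r_i}(x_i)|$ to a bound on $|D|$. The only cosmetic difference is that you realize the density bound via an interior ball inside $B_{r_i}(x_i)\cap\Omega$ whereas the paper compares $B_r(x)\cap\Omega$ directly to a half-space cap; both invoke the same Reifenberg geometry and yield the same factor $(10/(1-\delta))^n$.
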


\begin{proof}From (\ref{clessthanb}), there exists a small $r_x > 0$ such that
\begin{equation}\label{rxexist} |C \cap B_{r_x}(x)| = \vare | B_{r_x}(x)|,\h\h\h\h\h |C \cap B_r(x)| \leq \vare | B_r(x)|, \h\h\h \forall r \in (r_x, 1]. \end{equation}

\noindent
Since  $\{C \cap B_{r_x}(x): x \in C\}$ is a covering of $C$ with $ r_x \leq 1$, by the Vitali covering lemma, there exists a disjoint $ \{C \cap B_{r_i}(x_i) : x_i \in C \}^{\infty}_{i = 1} $ such that

\begin{equation}~\label{riexist} C \subset  \bigcup_{i}B_{5r_i}(x_i), \h\h\h\h |C| \leq 5^n \sum_{i} |B_{r_i}(x_i)|. \end{equation}
Then, by (\ref{rxexist}), we see that
\begin{equation}~\label{riequality}| C \cap  B_{5r_i}(x_i)| < \vare |B_{5r_i}(x_i)| = 5^n \vare |B_{r_i}(x_i)| =  5^n | C\cap B_{r_i}(x_i)|. \end{equation}

Now we claim that
\begin{equation}~\label{flateffect} \sup_{0 < r \leq 1} \sup_{x \in \Omega} \frac{|B_r(x)|}{|B_r(x) \cap \Omega|} \leq (\frac{2}{1-\delta})^n. \end{equation}

\noindent
To do this, choose any $r \in (0, 1] $ and any $ x \in \Omega. $ The case dist$(x, \d \Omega) \geq r $ follows form the fact $B_r(x) \subset \Omega.$ So suppose that dist$(x, \d \Omega) < r $. Then there exists a $ y \in \d \Omega $ so that
\begin{equation}  {\rm{dist}} (x, \d \Omega) = {\rm{dist}}(x, y ) < r. \end{equation}
Since $\d \Omega$ is $(\delta, 1)$-Reifenberg flat, without loss of generality we may assume
$$B_r(x) \cap \{ x_n > \delta\} \subset B_r(x) \cap \Omega \subset B_r(x) \cap \{ x_n > -\delta\}$$
in some appropriate coordinate system in which $y = 0.$ then from the geometry and an easy computation, we see that
$$\frac{|B_r(x)|}{|B_r(x) \cap \Omega|} \leq \frac{|B_r(x)|}{|B_r(x) \cap \{ x_n > \delta \} |} \leq (\frac{2}{1-\delta})^n,$$
which shows (\ref{flateffect}). 

Finally, by (\ref{brind}), (\ref{riequality}), and (\ref{flateffect}), we get
\begin{eqnarray*}
|C| & = &| \bigcup_{i}(B_{5r_i}(x_i) \cap C ) | \\
&\leq &\sum_{i} |B_{5r_i}(x_i) \cap C  | \\
&< &\vare \sum_{i} |B_{5r_i}(x_i) | \\
& = &5^n \vare \sum_{i} |B_{r_i}(x_i) | \\
& \leq & 5^n \vare (\frac{2}{1-\delta})^n\sum_{i} |B_{r_i}(x_i)\cap \Omega | \\
& = &\vare (\frac{10}{1-\delta})^n |\bigcup_{i}(B_{r_i}(x_i)\cap \Omega )| \\
& \leq&  \vare (\frac{10}{1-\delta})^n |D|,
\end{eqnarray*}

\noindent which completes the proof.
\end{proof}

\section{Regularity for Elliptic Equations}
\subsection{Interior Estimates}
In this section we investigate the interior $W^{1,p}$ estimates for a solution of
\begin{equation}\label{intdiv}-\mbox{div}(A(x) \g u ) = \mbox{div} f \h\h\h\h \mbox{in} \h \Omega.\end{equation}
Our assumption is that $\Omega$ is bounded open set in $\bbr^n$ and  the coefficient matrix  $A=\sum_{i=0}^{i=K}A^i\chi_{\Omega^i}$ is defined on $\bbr^n$ where $ \Omega^1, \dots, \Omega^K$ are open subsets of $\Omega$ with flat boundary (see definition \ref{flatdefinition}), $  \Omega^0 \defeq {\Omega\setminus\cup_{i=1}^{i=K}\Omega^i} $ and $A^i$'s for $i=0,\dots,K$ are uniformly elliptic and also $(\delta, R)$-vanishing on $\Omega^i$ with small BMO seminorms for $i=0,\dots,K$.

$W^{1,p}$ estimate without discontinuity in $A$ was done by S. Byun and L. Wang in \cite{BW03}. Here we consider the case that $A$ has discontinuity along the boundary of subdomains $\Omega^i$'s in  $\Omega$ for $i=1,\dots,K$.

The main result of this section is the following:

\begin{thm}\label{110main}There is a constant $N_1$ so that for any $\vare > 0$,  there exists a small $\delta= \delta(\vare) > 0 $ such that for all $f \in L^2(B_{4} ; \bbr^n)$ and for all $A=\sum_{i=0}^{i=K}A^i\chi_{\Omega^i}$ where $A_i$'s are uniformly elliptic and $(\delta, 4)$-vanishing for $i = 0, \dots, K$and $\Omega^i$'s for $i = 1, \dots, K$ and $\Omega$ are $(\delta, 9)$-flat, if $u$ is a weak solution of $ -{\rm{div}}(A \g u) = {\rm{div}} f$ in $\Omega \supset B_{4}$ and if 
$$| \{x \in \Omega : \M(|\g u |^2 )(x) > N_1^2\} \cap B_r | \geq \vare |B_r| \hspace{2mm}\mbox{for all}\hspace{2mm} r \in (0, 1],$$
then
$$B_r \subset \{ x \in \Omega : \M (|\g u |^2 )(x) > 1\} \cup \{ x \in \Omega : \M (|f |^2 )(x) > \delta^2\}.$$
\end{thm}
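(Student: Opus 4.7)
The plan is to prove the contrapositive: after a scaling reduction to $r = 1$, I will show that if there exists $x_0 \in B_1$ with both $\M(|\g u|^2)(x_0) \leq 1$ and $\M(|f|^2)(x_0) \leq \delta^2$, then $|\{\M(|\g u|^2) > N_1^2\} \cap B_1| < \vare |B_1|$, for a universal constant $N_1$ and for $\delta$ chosen small depending only on $\vare$. The existence of such an $x_0$ is exactly the negation of the conclusion $B_1 \subset \{\M(|\g u|^2) > 1\} \cup \{\M(|f|^2) > \delta^2\}$. Since $x_0 \in B_1$ forces $B_3 \subset B_4(x_0)$, the two bounds at $x_0$ immediately yield the averaged estimates $\frac{1}{|B_3|}\int_{B_3}|\g u|^2\, dx \leq C$ and $\frac{1}{|B_3|}\int_{B_3}|f|^2\, dx \leq C\delta^2$.

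Next I introduce the frozen--coefficient comparison $v$ defined by $v - u \in H^1_0(B_3)$ and $-\mbox{div}(\tilde{A}_{B_3}\g v) = 0$ in $B_3$, where $\tilde{A}_{B_3} = \sum_{i=0}^{K} \bar{A^i}_{\Omega^i_3}\chi_{\Omega^i_3}$ is the piecewise--constant replacement from Section~2. Subtracting the two equations, testing with $u-v$, and using ellipticity of $\tilde{A}_{B_3}$ gives
$$\int_{B_3}|\g(u-v)|^2\, dx \leq C\int_{B_3}|f|^2\, dx + C\int_{B_3}|A - \tilde{A}_{B_3}|^2|\g u|^2\, dx.$$
Combining the boundedness of $A$ with the $(\delta, R)$-vanishing assumption gives $\|A - \tilde{A}_{B_3}\|_{L^s(B_3)} \leq C\delta$ for every $s < \infty$, while a small self--improving $L^{2+\eta}$ bound on $\g u$ from a Gehring--Meyers reverse H\"older inequality absorbs the remaining factor. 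This is expected to yield
$$\frac{1}{|B_3|}\int_{B_3}|\g(u-v)|^2\, dx \leq C\delta^{2\beta}$$
for some $\beta \in (0,1]$.

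The heart of the argument, and the main obstacle, is an interior Lipschitz estimate
$$\|\g v\|_{L^\infty(B_2)} \leq N_0$$
with $N_0$ depending only on $n$, $\Lambda$ and $K$. The function $v$ solves a transmission problem across the $(\delta, R)$-Reifenberg flat interfaces $\d\Omega^i \cap B_3$ with a constant coefficient matrix on each piece $\Omega^i_3$. I plan to obtain this bound by a compactness / flat--limit argument: since each interface is $\delta$-close to a hyperplane at every scale while the coefficients are already constant on each side, sending $\delta \to 0$ produces a limit problem with piecewise constant coefficients separated by finitely many flat hyperplanes. For such a limit the Li--Vogelius and Li--Nirenberg regularity theory~\cite{LV,LN} supplies uniform $C^{1,\alpha}$ (hence Lipschitz) bounds, and a Campanato--type perturbation transfers those bounds back to the original $v$ once $\delta$ is sufficiently small. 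Ensuring that the constant $N_0$ stays uniform when subdomains are allowed to touch each other, as emphasized in the statement of Theorem~\ref{main}, is the subtlest point.

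With $N_0$ in hand, choose $N_1^2$ larger than $4C_n(N_0^2 + 1)$. For each $x \in B_1$ I split the supremum in $\M(|\g u|^2)(x)$ according to the scale $\rho$. For $\rho \leq 1$ the ball $B_\rho(x)$ lies in $B_2$, so $|\g u|^2 \leq 2|\g v|^2 + 2|\g(u-v)|^2 \leq 2N_0^2 + 2|\g(u-v)|^2$ on the ball. For $\rho > 1$ the ball $B_\rho(x)$ is contained in $B_{2\rho}(x_0)$, so the average of $|\g u|^2$ there is bounded by $C_n\M(|\g u|^2)(x_0) \leq C_n$. Combining these two regimes yields the pointwise inequality
$$\M(|\g u|^2)(x) \leq C_n(N_0^2 + 1) + 2\M\bigl(|\g(u-v)|^2\chi_{B_2}\bigr)(x), \qquad x \in B_1,$$
so that $\{\M(|\g u|^2) > N_1^2\} \cap B_1 \subset \{\M(|\g(u-v)|^2\chi_{B_2}) > c\}$ for some absolute $c > 0$. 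Applying the weak $(1,1)$ estimate from Theorem~\ref{pp11}(b) to $g = |\g(u-v)|^2\chi_{B_2} \in L^1$ finally gives
$$\bigl|\{\M(|\g u|^2) > N_1^2\} \cap B_1\bigr| \leq \frac{C}{c}\int_{B_2}|\g(u-v)|^2\, dx \leq C\delta^{2\beta},$$
which is strictly less than $\vare|B_1|$ once $\delta = \delta(\vare)$ is chosen small enough, finishing the argument.
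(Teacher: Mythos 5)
There is a genuine gap at the step you yourself flag as ``the heart of the argument'': the interior Lipschitz estimate $\|\g v\|_{L^\infty(B_2)} \leq N_0$ for your chosen comparison function $v$. You define $v$ by freezing coefficients to $\tilde{A}_{B_3}=\sum_i \bar{A^i}_{\Omega^i_3}\chi_{\Omega^i_3}$, which keeps the Reifenberg-flat interfaces $\d\Omega^i$ intact. But a piecewise-constant coefficient matrix jumping across a merely $(\delta,R)$-Reifenberg-flat interface (which need not be $C^{1}$, nor even rectifiable) does not yield Lipschitz, or even bounded-gradient, solutions: Reifenberg flatness is exactly the regularity class for which one expects $W^{1,p}$ estimates but not pointwise gradient bounds. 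The Li--Vogelius and Li--Nirenberg results you invoke require $C^{1,\alpha}$ interfaces and do not transfer by a ``Campanato-type perturbation'' once $\delta>0$ introduces a wiggly interface; a piecewise constant coefficient is already in every reasonable oscillation class, so the only thing that could be perturbed is the geometry, and no such perturbation theory is available here. The paper side-steps this entirely: its comparison solution solves $-{\rm div}(\tilde{A^b}_{B_2}\g v)=0$ with $\tilde{A^b}_{B_2}$ piecewise constant across an exact hyperplane $\{x_n=a\}$ (Lemma~\ref{uclosev}, Corollary~\ref{duclosedv}), and the Lipschitz bound (Lemma~\ref{dvbound}) is proved for precisely that flat two-piece model via tangential difference quotients and a Sobolev/ODE argument in the normal direction. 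The deviation term $A-\tilde{A^b}_{B_2}$ then absorbs both the BMO oscillation of each $A^i$ and the geometric error $|\tilde{A}_{B_2}-\tilde{A^b}_{B_2}|$ coming from flattening the interface, and both are small when $\delta$ is small.

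A second, related omission is the case of three subdomains meeting near the ball. Your argument implicitly assumes the local interface structure near any $B_r(x)$, $r\le 1$, is a single nearly-flat surface, but since the subdomains may touch, a small ball can intersect $\Omega^l$, $\Omega^0$, and $\Omega^m$ simultaneously. The paper's Lemma~\ref{notmore3} shows $\delta$-flatness limits this to at most three, and Lemma~\ref{bigducontr} handles the three-piece configuration by a dichotomy: if ${\rm dist}(\Omega^l,\Omega^m)$ is small, a Harnack-type argument replaces the two close interfaces by a single flat plane so that Lemma~\ref{bigducontrol1} applies; if it is bounded below, a Vitali covering by balls of radius comparable to that separation reduces to the two-piece case at a smaller scale, at the cost of a harmless multiplicative constant in the threshold $N_1$. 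Your proposal does not see this geometric obstruction and would therefore not close at the final step even if the Lipschitz estimate for $v$ were available. The remaining ingredients (scaling, contrapositive, the energy comparison, the split of $\M(|\g u|^2)$ into scales $\rho\le 1$ and $\rho>1$, and the weak $(1,1)$ estimate) are in line with the paper's strategy; the two items above are what need to be repaired.
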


\begin{definition}\label{weakdef} We say that $ u \in H^1(B_R)  \h (R>0) $ is a weak solution of (\ref{intdiv}) if
$$\displaystyle \int_{B_R} A \g u \g \varphi dx =  - \displaystyle \int_{B_R} f  \g \varphi dx \h\h\h\h \mbox{ for } \h\h \forall \varphi \in H_0^1(B_R).$$
\end{definition}

\begin{lem}\label{dufu}\cite{BW03}Assume that $u$ is a weak solution of (\ref{intdiv}) in $B_2$. Then
\begin{equation}\label{dufueq}\displaystyle \int_{B_2} \varphi^2 | \g u |^2 dx \leq C (\displaystyle \int_{B_2} \varphi^2 |f|^2 dx + \displaystyle \int_{B_2} |\g \varphi|^2|u|^2 dx) \h\h\h \mbox{for any} \h\h \varphi \in C_0^{\infty}(B_2).\end{equation}
\end{lem}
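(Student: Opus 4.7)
The plan is to prove this standard Caccioppoli-type inequality by testing the weak formulation against $\varphi^2 u$ and using uniform ellipticity together with Young's inequality to absorb the gradient terms.

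First I would observe that $\varphi^2 u \in H^1_0(B_2)$ since $\varphi \in C_0^\infty(B_2)$ and $u \in H^1(B_2)$, so it is an admissible test function in Definition \ref{weakdef}. Computing $\nabla(\varphi^2 u) = \varphi^2 \nabla u + 2\varphi u \nabla \varphi$, the weak formulation yields
\begin{equation*}
\int_{B_2} \varphi^2\, A\nabla u \cdot \nabla u\, dx + 2\int_{B_2} \varphi u\, A\nabla u \cdot \nabla \varphi\, dx = -\int_{B_2} \varphi^2 f\cdot \nabla u\, dx - 2\int_{B_2} \varphi u\, f\cdot \nabla\varphi\, dx.
\end{equation*}

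Next I would apply uniform ellipticity to bound the first term from below by $\Lambda^{-1}\int_{B_2} \varphi^2 |\nabla u|^2\, dx$, and use $|A| \leq \Lambda$ (or the coordinate bound from the ellipticity constant) to estimate the remaining terms. Each of the three terms on the right-hand side is a product of $\varphi |\nabla u|$ with either $|u||\nabla\varphi|$ or $\varphi |f|$, so by Young's inequality $ab \leq \epsilon a^2 + \frac{1}{4\epsilon} b^2$ applied pointwise, I can bound, for any $\epsilon > 0$,
\begin{equation*}
\left|\int_{B_2} \varphi^2 A\nabla u\cdot \nabla u \, dx\right| + 2\left|\int_{B_2}\varphi u A\nabla u\cdot \nabla\varphi\, dx\right| \leq \epsilon \int_{B_2} \varphi^2 |\nabla u|^2\, dx + C_\epsilon \int_{B_2} \bigl(|\nabla\varphi|^2 u^2 + \varphi^2 |f|^2\bigr)\, dx,
\end{equation*}
with $C_\epsilon$ depending only on $\Lambda$ and $\epsilon$.

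Finally, I would choose $\epsilon$ sufficiently small (e.g.\ $\epsilon = \frac{1}{2\Lambda}$) so that the term $\epsilon \int_{B_2} \varphi^2 |\nabla u|^2\, dx$ can be absorbed into the left-hand side $\Lambda^{-1}\int_{B_2}\varphi^2|\nabla u|^2\, dx$ coming from ellipticity. Rearranging gives
\begin{equation*}
\int_{B_2} \varphi^2 |\nabla u|^2\, dx \leq C\left(\int_{B_2} \varphi^2 |f|^2\, dx + \int_{B_2} |\nabla\varphi|^2 u^2\, dx\right),
\end{equation*}
with $C = C(\Lambda, n)$. There is no real obstacle here; the argument is entirely routine and the only point requiring mild care is the bookkeeping in Young's inequality to ensure the coefficient of $\int \varphi^2 |\nabla u|^2$ on the right stays strictly below $\Lambda^{-1}$ so that absorption is valid.
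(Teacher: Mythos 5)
Your approach is correct and is the standard Caccioppoli-inequality argument; the paper itself does not reproduce a proof of this lemma but simply cites \cite{BW03}, where the same energy-estimate technique (test against $\varphi^2 u$, use ellipticity, absorb via Young) is used. One display in your write-up is mis-stated: in the inequality
\begin{equation*}
\left|\int_{B_2} \varphi^2 A\nabla u\cdot \nabla u \, dx\right| + 2\left|\int_{B_2}\varphi u\, A\nabla u\cdot \nabla\varphi\, dx\right| \leq \epsilon \int_{B_2} \varphi^2 |\nabla u|^2\, dx + C_\epsilon \int_{B_2} \bigl(|\nabla\varphi|^2 u^2 + \varphi^2 |f|^2\bigr)\, dx
\end{equation*}
the first term on the left should not be there at all: by ellipticity it is bounded \emph{below} by $\Lambda^{-1}\int \varphi^2 |\nabla u|^2$, so it cannot be dominated by $\epsilon\int \varphi^2|\nabla u|^2$ for small $\epsilon$, and moreover it is precisely the quantity you want to keep on the left-hand side of the identity. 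What you mean, and what the surrounding prose makes clear, is that the three remaining terms (the cross term $2\int \varphi u\, A\nabla u\cdot\nabla\varphi$ together with the two terms from $f$) are each of the form $(\varphi|\nabla u|)\cdot(\text{something in } L^2)$ and are bounded by $\epsilon\int\varphi^2|\nabla u|^2 + C_\epsilon\int(|\nabla\varphi|^2 u^2 + \varphi^2|f|^2)$; with that correction the absorption step goes through exactly as you describe. This is a transcription slip rather than a gap in the argument.
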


We want to control the gradient of the weak solution of (\ref{intdiv}) using the gradient of the weak solution of the related homogenous equation. The following lemma shows that one can bound the gradient of homogenous solution by $L^2$-norm.

\begin{lem}\label{dvbound}
If $v$ is a weak solution of ${\rm{div}}(\bar{A} \g v(x)) = 0$ in $B_1$ for a piecewise constant matrix $\bar{A} = \bar{A^1} \chi_{B_1 \cap \{x_n > a\}}+ \bar{A^0} \chi_{B_1 \cap \{x_n < a\}}$ for any $a \in (-1, 1)$,  then $$\| \g v \|_{L^\infty(B_{\frac{1}{2}})} \leq C \| v \|_{L^2(B_1)}.$$
\end{lem}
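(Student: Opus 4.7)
The plan is to exploit two structural features: the coefficient matrix $\bar A(x)$ depends only on $x_n$, and on each of the two open half-balls $B_1^+ := B_1 \cap \{x_n > a\}$ and $B_1^- := B_1 \cap \{x_n < a\}$ the equation reduces to a genuine constant-coefficient elliptic PDE. The flatness of the interface $\{x_n = a\}$ gives full translation symmetry in the tangential variables $x' = (x_1, \ldots, x_{n-1})$, while the classical theory of constant-coefficient elliptic equations handles everything on each open half.

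First I would show that every purely tangential derivative of $v$ is again a weak solution of the same equation and therefore satisfies the standard $L^\infty$ estimate via De Giorgi--Nash--Moser. Since $\bar A(x)$ is independent of $x'$, for each $k < n$ and small $h$ the tangential difference quotient $h^{-1}(v(\cdot + h e_k) - v(\cdot))$ is a weak solution of $\mathrm{div}(\bar A \nabla \cdot) = 0$ on any ball compactly contained in $B_1$; a standard Caccioppoli plus compactness argument yields that $\partial_{x_k} v \in H^1_{\mathrm{loc}}(B_1)$ is itself a weak solution. Iterating gives $\partial_{x'}^{\alpha} v$ as a weak solution for every tangential multi-index $\alpha$, and DGNM produces
\[
\|\partial_{x'}^{\alpha} v\|_{L^\infty(B_{7/8})} \leq C_\alpha \|v\|_{L^2(B_1)},
\]
so the traces of $v$ and of all its tangential derivatives on $\{x_n = a\} \cap B_{7/8}$ are smooth with uniform estimates.

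Next I would promote this to a full gradient estimate by studying each open half-ball $B_1^{\pm}$ separately. On $B_1^+$, $v$ solves a constant-coefficient divergence-form elliptic equation, so it is classically smooth in the interior of $B_1^+$; the content is regularity up to the flat part $\{x_n = a\} \cap B_1$ of $\partial B_1^+$. Since the tangential derivatives of $v$ are bounded and H\"older continuous on that flat piece by the previous step, classical Schauder boundary regularity for the constant-coefficient equation on the half-space gives $v$ of class $C^{1,\alpha}$ up to $\{x_n = a\}$ from the $+$ side with
\[
\|\nabla v\|_{L^\infty(B_{1/2} \cap \{x_n > a\})} \leq C \|v\|_{L^2(B_1)} .
\]
The symmetric argument handles $B_1^-$. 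Because $\{x_n = a\}$ has measure zero, the two half-ball bounds combine into the desired $L^\infty$ estimate on $\nabla v$ throughout $B_{1/2}$.

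The main obstacle is the boundary regularity at the interface in the second step, i.e., the fact that the transmission problem (constant coefficients on each side of a flat interface, with matching Dirichlet trace and with continuity of the normal flux $(\bar A \nabla v) \cdot e_n$ built into the weak formulation) admits $C^{1,\alpha}$ solutions up to the interface from each side. The flatness of the interface and the constancy of $\bar A$ on each side are exactly what make the Schauder estimates applicable; uniformity in $a \in (-1,1)$ follows because $B_{1/2}$ is strictly inside $B_1$, so the Schauder constants do not degenerate as $a$ varies and, when $|a|$ is large enough that the interface does not meet $B_{7/8}$ at all, the lemma reduces directly to standard constant-coefficient interior regularity.
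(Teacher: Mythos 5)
Your plan matches the paper's for the first half: tangential difference quotients plus the translation invariance of $\bar A$ in $x'$ show that all tangential derivatives of $v$ are again weak solutions, and De Giorgi--Nash--Moser (applied after the iterated Caccioppoli estimates) gives $L^\infty$ and H\"older control on $D^\alpha v$ for tangential $\alpha$, hence on the trace of $v$ at $\{x_n = a\}$. For the second half the two arguments diverge in technique though not in spirit. The paper subtracts the trace, setting $g(x) = v(x',a)$ and $\tilde v = v - g$, so that $\tilde v$ solves a zero-Dirichlet problem on the half-ball with a right-hand side lying in $H^{[n/2]+1}$; it then invokes $H^s$ elliptic boundary regularity plus the Sobolev embedding to land in $C^{1,\gamma}$. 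You instead keep the boundary data and invoke Schauder boundary estimates on each half-ball directly, using the $C^{1,\alpha}$ control of the trace obtained from DGNM. Both routes are valid and yield the same conclusion; the Schauder route is arguably leaner since it avoids introducing $g$ and $\tilde v$ and the Sobolev-exponent bookkeeping, while the paper's Sobolev route avoids having to cite a Schauder-up-to-the-boundary theorem for inhomogeneous Dirichlet data and instead leans on standard $H^s$ theory.

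One point is handled more carefully in the paper than in your sketch: uniformity over $a \in (-1,1)$. You assert that "the Schauder constants do not degenerate as $a$ varies," but the relevant one-sided Schauder estimate is applied on regions such as $B_{7/8} \cap \{x_n > a\}$, whose shape degenerates to a thin sliver as $a \uparrow 7/8$, so the constant in that estimate does in general blow up. The paper avoids this by a three-way case split ($a=0$ exactly, $|a| > 3/4$ so the interface misses $B_{3/4}$ entirely and one uses interior estimates, and $0 < |a| < 3/4$ where one covers $B_{1/2}$ by small balls $B_{1/8}(x)$ that are either centered on the interface, hence reducible to the $a=0$ case after translation and scaling, or at distance $\geq 1/8$ from the interface, hence reducible to interior estimates). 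You should supply an analogous covering or rescaling argument; your remark about the interface missing $B_{7/8}$ covers only the extreme regime, not the intermediate one. With that repair your proof is complete and correct.
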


\begin{proof}
First assume $a = 0$. Let $D_i^h v(x) = \frac{v(x+ he_i)-v(x)}{h}$, for $h > 0, i = 1, \dots, n-1$.  Since the jump of the coefficient matrix $\bar{A}$ occurs across $\{ x_n = 0 \} $,
$$ \mbox{div}(\bar{A} \g D_i^h v(x)) = 0$$
for sufficiently small $h > 0.$ Also
\begin{align}
\displaystyle \int_{B_{\frac{1}{2}+ \frac{1}{4}}}|\g D_i^h v(x)|^2dx  & \leq C\displaystyle \int_{B_{\frac{1}{2}+ \frac{1}{4}+\frac{1}{8}}} |D_i^h v(x)|^2 dx \\ 
& \leq C \displaystyle \int_{B_{\frac{1}{2}+ \frac{1}{4}+\frac{1}{8}+\frac{1}{16}}} |\g v(x)|^2 dx \\
&\leq C \displaystyle \int_{B_1} | v(x)|^2 dx
\end{align}
for $0 < h <\frac{1}{16} $. Here we used the Lemma \ref{dufu} for the first and the third inequality. So $v_{x_i} \in H^1(B_{\frac{3}{4}}) $ for $  i = 1, \dots, n-1$. Similarly, we can apply this method to $v_{x_i}$, i.e. using $D_j^h v_{x_i}(x)$ for $  i, j = 1, \dots, n-1$. So $v_{x_i x_j} \in H^1(B_{\frac{1}{2} + \frac{1}{8}}) $ for $  i = 1, \dots, n-1$. Let $S = [\frac{n}{2}]+3$. For any tangential vector $\alpha = (\alpha_1, \dots, \alpha_{n-1}, 0)$ such that $|\alpha| \leq S$, we can iterate $|\alpha|$ times and get 
$$D^\alpha v(x) \in H^1(B_{\frac{1}{2} + \frac{1}{2^{S+1}}}). $$ 
Since $\mbox{div}(\bar{A} \g D^{\alpha}v(x)) = 0$, we can use the De Giorgi-Nash theorem to say that $D^{\alpha} v$ is H\"{o}lder continuous. So there is a constant $C$ such that
\begin{align}\label{alphabound}\| D^{\alpha} v \| _{L^\infty(B_{\frac{1}{2} + \frac{1}{2^{S+2}}})} &\leq C\|D^{\alpha} v \| _{L^2(B_{\frac{1}{2} + \frac{1}{2^{S+1}}})} \\
& \leq C \|v \|_{L^2(B_1)}.\end{align}

Now consider the vertical direction. 
Define $$g(x_1, x_2, \dots, x_n) \defeq v(x_1, \dots, x_{n-1}, 0) \hspace{2mm}\mbox{in} \hspace{2mm}B_{\frac{1}{2} +\frac{1}{2^{S+1}}}^+.$$
We can see that $g_{x_n} = 0$ and also by (\ref{alphabound}),
$$\begin{cases}\hspace{0.5cm}D^{\alpha} g  = D^{\alpha} v \in H^1(B_{\frac{1}{2} + \frac{1}{2^{S+1}}}^+)\\
   \hspace{0.5cm} \| D^{\alpha} g \| _{L^\infty(B_{\frac{1}{2} + \frac{1}{2^{S+2}}})}= \| D^{\alpha} v \| _{L^\infty(B_{\frac{1}{2} + \frac{1}{2^{S+2}}})} \leq C \| v \|_{L^2(B_1)}
  \end{cases}
$$
for $\alpha = (\alpha_1, \dots, \alpha_{n-1}, 0)$ such that $ |\alpha| \leq S $. 
Let $$\tilde{v}(x_1, \dots, x_n) \defeq v(x_1, \dots, x_n) - g(x_1, \dots, x_n).$$ 
Note that  $ \tilde{v} \in H^1(B_{\frac{1}{2} + \frac{1}{2^{S+1}}}^+)$ and $\tilde{v}|_{x_n= 0} = 0$. Since  $\mbox{div}(\bar{A}\g (\tilde{v} + g)) = 0 $,
\begin{align*}\mbox{div}(\bar{A}\g \tilde{v})&=-\mbox{div}(\bar{A}\g g)\\
& =- \sum_{i = 1}^{n}(\sum_{j = 1}^{n} \bar{a}_{ij}g_{x_i} )_{x_j}\\
& =- \sum_{i = 1}^{n-1}(\sum_{j = 1}^{n-1} \bar{a}_{ij}g_{x_i} )_{x_j} \in H^{S-1}(B_{\frac{1}{2}}^+)=H^{[\frac{n}{2}]+1}(B_{\frac{1}{2}}^+)
\end{align*}
Furthermore, by Theorem 5 in Section 6.3 and the Trace Theorem, see Section 5.5 in \cite{EV98}, also by Lemma \ref{dufu}, 
\begin{equation}~\label{vhsov} \|\tilde{v} \|_{H^{S-1}(B_{\frac{1}{2}}^+)} \leq C(\|v \|_{L^2(B_1)}+\|\tilde{v} \|_{L^2(B_{\frac{1}{2}})}) \leq C\| v \|_{L^2(B_1)},\end{equation}
We can combine (\ref{vhsov}) and Sobolev inequality to get 
$$\|\tilde{v} \|_{C^{S-[\frac{n}{2}]-2, \gamma}(B_{\frac{1}{2}}^+)} \leq C \|\tilde{v} \|_{H^{S-1}(B_{\frac{1}{2}}^+)} \leq C\|  v \|_{L^2(B_1)}. $$
Thus $\tilde{v}$ is $C^{1,\gamma}$ H\"{o}lder continuous. Finally we can say that $| \g \tilde{v} |$ is bounded in $ \overline{B_{\frac{1}{2}}^+}$. Similarly $| \g \tilde{v} |$ is also bounded in $\overline{B_{\frac{1}{2}}^-}$. So $| \g \tilde{v}| = | \g v - \g \tilde{g} |$ is bounded in $B_{\frac{1}{2}}$. Thus
\begin{equation}~\label{C0}\| \g v \|_{L^{\infty}(B_{\frac{1}{2}})} \leq C \| v \| _{L^2(B_1)}.\end{equation}

Assume $|a| > \frac{3}{4}$. Then $\bar{A}$ has no discontinuity in $B_{\frac{3}{4}}$. So there is a constant $C$ such that
\begin{equation}~\label{C1}\| \g v \|_{L^\infty(B_{\frac{1}{2}})} \leq C \|v \|_{L^2(B_{\frac{3}{4}})} \leq C \| v \|_{L^2(B_1)}.\end{equation}

Assume $0< |a| < \frac{3}{4} $. Say $L \defeq \{ x \in \bbr^n : x_n = a \}$.


\noindent
For any $ x \in B_{\frac{3}{4}} \cap L$, $B_{\frac{1}{4}}(x) \subset B_1$. By above case for $a = 0$, there exists a constant $C$ such that
\begin{align}~\label{C2} \| \g v \|_{L^{\infty}( \{ x \in B_{\frac{1}{2}}: {\rm{dist}}(x,L) < \frac{1}{8}\} )}  & \leq \sup_{x \in B_{\frac{3}{4}} \cap L} \| \g v \|_{L^\infty(B_{\frac{1}{8}}(x))}\\
&\leq C\|v \|_{L^2(B_{\frac{1}{4}}(x))} \leq C \| v \|_{L^2(B_1)}.\end{align}

\noindent
For any $ x \in \{ x \in B_{\frac{1}{2}}: {\rm{dist}}(x,L) \geq \frac{1}{8} \}$, $B_{\frac{1}{8}}(x) \subset B_1$ and $\bar{A}$ has no discontinuity in $B_{\frac{1}{8}}(x)$. So there exists a constant $C$ such that
\begin{equation}~\label{C3}\sup_{\{ x \in B_{\frac{1}{2}}: {\rm{dist}}(x,L) \geq \frac{1}{8} \} } \| \g v \|_{L^\infty(B_{\frac{1}{16}}(x))} \leq C \|v \|_{L^2(B_{\frac{1}{8}}(x))} \leq C \| v \|_{L^2(B_1)} .\end{equation}

\noindent
By taking the maximum $C$ in (\ref{C0}), (\ref{C1}), (\ref{C2}) and (\ref{C3}), we are done.
\end{proof}

\begin{lem}~\label{uclosev}
For any $\vare > 0,$ there is a small $ \delta= \delta(\vare) > 0$ such that for any weak solution $u$ of (\ref{intdiv}) in $B_2$  where for any $l,m = 0 \dots K$ and any $|a| < 2$ ,
\begin{equation}B_2 \cap \{ x_n > a + \delta \} \subset \Omega^l_2 \subset  B_2 \cap \{ x_n > a -\delta \} \end{equation}
\begin{equation}B_2 \cap \{ x_n < a - \delta \} \subset \Omega^m_2 \subset  B_2 \cap \{ x_n < a +\delta \} \end{equation}
and
\begin{equation}~\label{dubound}\frac{1}{|B_2|} \displaystyle \int_{B_2} | \g u |^2 dx \leq 1,\end{equation}
\begin{equation}~\label{fabound} \frac{1}{|B_2|} \displaystyle \int_{B_2} ( |f|^2 + |A - \tilde{A}_{B_2}|^2) dx \leq \delta^2,\end{equation}
where $\tilde{A}_{B_2} =  \sum_{i}\overline{A^i}_{\Omega^i_2} \chi_{\Omega^i_2},$ there exists a piecewise constant matrix $\tilde{A^b}_{B_2} $ as $ \tilde{A^b}_{B_2} = \overline{A^l}_{\Omega^l_2} \chi_{B_2 \cap \{ x_n > a\}}+ \overline{A^m}_{\Omega^m_2} \chi_{B_2 \cap \{ x_n < a\}}$ and for a corresponding weak solution $v$ of
\begin{equation}~\label{defv}-{\rm{div}}(\tilde{A^b}_{B_2} \g v) = 0 \h\h\h\h in \h\h B_2\end{equation}
such that $$\displaystyle \int_{B_2}|u-v|^2 dx \leq \vare^2.$$
\end{lem}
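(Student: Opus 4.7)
The plan is a compactness-by-contradiction argument, in the spirit of the perturbative scheme used by Byun--Wang. Suppose the conclusion fails: there exists $\vare_0 > 0$ and sequences $\delta_k \to 0$, $a_k \in (-2,2)$, indices $l_k, m_k \in \{0,\dots,K\}$, functions $f_k \in L^2(B_2)$, uniformly elliptic matrices $A_k = \sum_i A_k^i \chi_{\Omega^i_k}$ and subdomains satisfying the flatness and BMO hypotheses with constant $\delta_k$, together with weak solutions $u_k$ of $-\mbox{div}(A_k \g u_k) = \mbox{div} f_k$, such that
$$\int_{B_2} |u_k - v|^2\,dx > \vare_0^2$$
for every weak solution $v$ of $-\mbox{div}(\tilde{A^b}_{k,B_2} \g v) = 0$ in $B_2$. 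Since the indices live in a finite set, I pass to a subsequence so that $l_k = l$, $m_k = m$, and $a_k \to a_\infty \in [-2,2]$. Uniform ellipticity makes the averages $\overline{A_k^l}_{\Omega^l_{k,2}}$ and $\overline{A_k^m}_{\Omega^m_{k,2}}$ bounded, so (after a further subsequence) they converge to constant uniformly elliptic matrices $A^l_\infty, A^m_\infty$; set $\bar A_\infty \defeq A^l_\infty \chi_{\{x_n > a_\infty\}} + A^m_\infty \chi_{\{x_n < a_\infty\}}$.

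Normalizing $u_k$ to have zero mean on $B_2$ (legitimate, since adding a constant to the prospective $v$ does not alter $\int|u_k - v|^2$), the bound (\ref{dubound}) combined with Poincar\'e--Wirtinger yields a uniform $H^1(B_2)$ bound on $u_k$. Extracting once more, $u_k \rightharpoonup u_\infty$ weakly in $H^1(B_2)$ and strongly in $L^2(B_2)$ by Rellich. I would then show $u_\infty$ weakly solves $-\mbox{div}(\bar A_\infty \g u_\infty) = 0$. Fix $\varphi \in C_c^\infty(B_2)$ and decompose $A_k = (A_k - \tilde A_{k,B_2}) + (\tilde A_{k,B_2} - \tilde{A^b}_{k,B_2}) + \tilde{A^b}_{k,B_2}$. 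The first piece has $L^2$-norm $\leq \delta_k\sqrt{|B_2|}$ by (\ref{fabound}); the second is $L^\infty$-bounded and supported in a slab of measure $O(\delta_k)$ by the sandwich hypothesis, hence also $L^2$-small; and $\tilde{A^b}_{k,B_2} \to \bar A_\infty$ boundedly and pointwise a.e., so $\tilde{A^b}_{k,B_2}\g\varphi \to \bar A_\infty \g\varphi$ strongly in $L^2(B_2)$. Pairing each term with the weakly convergent $\g u_k$ and using $\|f_k\|_{L^2} \to 0$ in the weak formulation gives $\int \bar A_\infty \g u_\infty \cdot \g\varphi\,dx = 0$.

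For each $k$ let $v_k \in H^1(B_2)$ be the unique weak solution of $-\mbox{div}(\tilde{A^b}_{k,B_2} \g v_k) = 0$ with $v_k - u_\infty \in H^1_0(B_2)$; such $v_k$ exists by Lax--Milgram, and the sequence is $H^1(B_2)$-bounded since the coefficients share the ellipticity constants of $A^l_\infty, A^m_\infty$. The same coefficient-convergence argument, applied to the equation for $v_k$, shows $v_k \rightharpoonup v_\infty$ where $v_\infty$ solves $-\mbox{div}(\bar A_\infty \g v_\infty) = 0$ with $v_\infty - u_\infty \in H^1_0(B_2)$. Uniqueness for the constant-coefficient piecewise elliptic problem (again Lax--Milgram) forces $v_\infty = u_\infty$. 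Rellich then gives $v_k \to u_\infty$ strongly in $L^2(B_2)$, so
$$\|u_k - v_k\|_{L^2(B_2)} \leq \|u_k - u_\infty\|_{L^2(B_2)} + \|u_\infty - v_k\|_{L^2(B_2)} \to 0,$$
contradicting the standing assumption $\|u_k - v_k\|_{L^2(B_2)} \geq \vare_0$.

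The main technical obstacle is justifying the limit equation for $u_\infty$: the matrix $A_k$ itself need not converge pointwise, and one must exploit in parallel the BMO-smallness of $A_k - \tilde A_{k,B_2}$, the geometric-smallness of $\tilde A_{k,B_2} - \tilde{A^b}_{k,B_2}$ (which costs a Hausdorff-slab of thickness $\delta_k$), and the bounded a.e. convergence of $\tilde{A^b}_{k,B_2}$ to a two-phase limit of precisely the form treated by Lemma \ref{dvbound}. A secondary bookkeeping issue is the boundary-value prescription for $v_k$; I use the trace of $u_\infty$ to both get existence/uniqueness and to force $v_\infty = u_\infty$ at the end.
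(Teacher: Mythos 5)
Your proposal is correct and follows the same overall compactness-by-contradiction strategy as the paper: extract weak $H^1$/strong $L^2$ limits, identify the limit equation with the two-phase piecewise-constant matrix, and then exhibit a genuine $\tilde{A^b}_k$-homogeneous competitor close to $u_k$. The step where you diverge from the paper is the construction of the competitor. The paper defines a corrector $h_k \in H_0^1(B_2)$ solving $-\mbox{div}(\tilde{A_k^b}\g h_k) = \mbox{div}((\tilde{A_k^b}-A_0)\g u_0)$ and takes the competitor to be (up to the sign typo in the paper, which should read $u_0 + \bar{u}_k + h_k$) the sum of the weak limit, the subtracted mean, and this corrector; the energy estimate gives $\|h_k\|_{L^2}\le C\|\tilde{A_k^b}-A_0\|_{L^\infty}$, and the argument relies on $L^\infty$-convergence of $\tilde{A_k^b}$ to $A_0$, which holds because the paper quietly fixes a single hyperplane $\{x_n=a\}$ for the whole sequence. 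You instead solve a Dirichlet problem: $v_k$ solves $-\mbox{div}(\tilde{A^b}_{k,B_2}\g v_k)=0$ with $v_k-u_\infty\in H_0^1(B_2)$, then show $v_k\rightharpoonup u_\infty$ by a second compactness pass and invoke Rellich. Each approach buys something: the corrector argument is one step shorter and avoids any appeal to traces, but your Dirichlet-problem construction is more robust in that it needs only bounded a.e.\ convergence $\tilde{A^b}_k\to\bar A_\infty$, not $L^\infty$-convergence, and therefore correctly handles the case of a drifting hyperplane $a_k\to a_\infty$ (which is what the literal negation of the uniform-in-$a$ statement produces). One cosmetic note: the closing reference to Lemma \ref{dvbound} is not actually used inside this proof; Lemma \ref{dvbound} enters only later, in Corollary \ref{duclosedv}, and could be dropped from your discussion here.
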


\begin{proof}
If not, there exists $\vare_0 > 0, \{A_k\} =\{ \sum_{i=0}^{K} A_k^i \chi_{\Omega^{i,k}}\}, \{u_k\}, \{f_k\}, \{ {\Omega^{l,k}_2}\}$ and $\{ (\Omega^{m,k})_2 \}$  for some $l,m = 0 \dots K$ and some $|a| < 2$ such that $u_k$ is a weak solution of
\begin{equation}~\label{akeq}-\mbox{div}(A_k \g u_k) = \mbox{div} f_k \h\h\h\h in \h\h\h\h B_2 \end{equation}
with
$$B_2 \cap \{ x_n >a + \frac{1}{k} \}\subset (\Omega^{l,k})_2 \subset B_2 \cap \{x_n > a -\frac{1}{k}\}$$
$$B_2 \cap \{ x_n <a - \frac{1}{k} \}\subset (\Omega^{m,k})_2 \subset B_2 \cap \{x_n < a +\frac{1}{k}\}$$ but
\begin{equation}~\label{contra1}\displaystyle \int_{B_2} |u_k - v_k |^2 dx > \vare_0^2 \end{equation}
for any weak solution $v_k$  of
\begin{equation}~\label{vkeq1} -\mbox{div}(\tilde{A_k^b}_{B_2} \g v_k) = 0 \hspace{1cm} \mbox{in} \h\h  B_2 \end{equation}
where $ \tilde{A_k^b}_{B_2} = \overline{A_k^l}_{(\Omega^{l,k})_2} \chi_{B_2 \cap \{ x_n > a\}}+ \overline{A_k^m}_{(\Omega^{m,k})_2} \chi_{B_2 \cap \{ x_n < a\}}.$

By (\ref{dubound}), $\{ u_k - \overline{u_k}_{B_2}\}_{k=1}^{\infty}$ is bounded in $H^1(B_2)$, and so  $\{ u_k - \overline{u_k}_{B_2}\}$ has a subsequence, which we denote as  $\{ u_k - \overline{u_k}\}$, such that
\begin{equation}~\label{subseq} u_k - \overline{u_k} \rightharpoonup   u_0 \h\h\h \mbox{in} \h\h\h H^1(B_2), \h\h\h
 u_k - \overline{u_k}  \rightarrow  u_0 \h\h\h \mbox{in} \h\h\h L ^2(B_2).
\end{equation}
Since $\tilde{A_k^b}_{B_2} $ is bounded in $ L^{\infty}$, there is a subsequence $\{ \tilde{A_k^b} \}$ such that
\begin{equation}~\label{aka01}\| \tilde{A_k^b}-A_0 \| _{\infty} \rightarrow 0 \h\h\h \mbox{as} \h\h\h  k \rightarrow \infty,
\end{equation}
for some piecewise constant matrix $A_0$. Since $\tilde{A_k^b}- \tilde{A_k}_{B_2} \rightarrow 0$ in $L^2(B_2)$ and $\tilde{A_k}_{B_2} - A_k \rightarrow 0$ in $L^2(B_2)$.  Thus $ A_k \rightarrow A_0$ in $L^2(B_2)$.

Next we will show that $u_0$ is a weak solution of
\begin{equation}~\label{u0eq} -\mbox{div}(A_0 \g u_0) = 0 \h\h\h \mbox{in} \h\h\h B_2\end{equation}
To do this, fix any $\varphi \in H_0^1(B_2)$. Then by (\ref{akeq}),
\begin{equation}~\label{akeq2} \displaystyle \int_{B_2} A_k \g u_k \g \varphi dx = - \displaystyle \int_{B_2} f_k \g \varphi dx. \end{equation}
Since $\g u_k \rightharpoonup  \g u_0$ and $ A_k \rightarrow A_0$ in $ L^2(B_2)$, $ A_k \g u_k \rightharpoonup A_0 \g u_0 \h\h\h \mbox{ in } \h\h\h L^2(B_2).$ Then by letting $k \rightarrow \infty$,
\begin{equation}~\label{a0u0} \displaystyle \int_{B_2} A_0 \g u_0 \g \varphi dx = 0.\end{equation}
This shows (\ref{u0eq}). Note that
\begin{align*} -\mbox{div}(\tilde{A_k^b} \g u_0 ) & = -\mbox{div}((\tilde{A_k^b}-A_0) \g u_0) -\mbox{div}(A_0 \g u_0)\\
& = -\mbox{div}((\tilde{A_k^b}-A_0) \g u_0)
\end{align*}
in $ B_2$. Let $ h_k$ be the weak solution of

\begin{equation}~\label{hkeq1}
\begin{cases}-\mbox{div}( \tilde{A_k^b} \g h_k) =  \mbox{div} ((\tilde{A_k^b}-A_0) \g u_0)  \h\h \mbox{in}\h B_2  \\ \hspace{1.8cm} h_k = 0 \h\h\h\h\h\h\h\h\mbox{on} \h \d B_2
\end{cases}\end{equation}

\noindent Then $ u_0 -h_k$ is a weak solution of
\begin{equation} \mbox{div} ((\tilde{A_k^b} \g (u_0 - h_k)) = 0  \h\h\h\h \mbox{in } B_2. \end{equation}
Furthermore, by (\ref{hkeq1}),

\begin{align*}
\|h_k\|_{L^2(B_2)} & \leq C \| \g h_k\|_{L^2(B_2)}   \leq  C \| (\tilde{A_k^b}-A_0) \g u_0\|_{L^2(B_2)} \\
& \leq C \| (\tilde{A_k^b}-A_0) \| _{L^\infty}\|\g u_0\|_{L^2(B_2)} \\
& \leq   C \| (\tilde{A_k^b}-A_0) \| _{L^\infty(B_2)}.
\end{align*}

\noindent
So now
\begin{align*}
\|u_k-(u_0+ \bar{u_k}-h_k)\|_{L^2(B_2)} & \leq  \| u_k- \bar{u_k} -u_0\|_{L^2(B_2)} + \|h_k\|_{L^2(B_2)}  \\
& \leq \| u_k- \bar{u_k}- u_0\|_{L^2(B_2)} +  C \| (\tilde{A_k^b}-A_0) \| _{L^\infty(B_2)}.
\end{align*}

\noindent
This estimate, (\ref{subseq}) and (\ref{aka01}) imply that
$$ \|u_k-(u_0+ \bar{u_k}-h_k)\|_{L^2(B_2)} \rightarrow 0 \h\h\h \mbox{as} \h\h\h k \rightarrow \infty. $$
But this is a contradiction to (\ref{contra1}) by (\ref{hkeq1}).

\end{proof}

\begin{cor}~\label{duclosedv}For any $\vare > 0,$ there is a small $ \delta= \delta(\vare) > 0$ such that for any weak solution $u$ of (\ref{intdiv}) in $B_2$  where for any $l,m = 0 \dots K$ and any $|a| < 2$ ,
\begin{equation}B_2 \cap \{ x_n > a + \delta \} \subset \Omega^l_2 \subset  B_2 \cap \{ x_n > a -\delta \} \end{equation}
\begin{equation}B_2 \cap \{ x_n < a - \delta \} \subset \Omega^m_2 \subset  B_2 \cap \{ x_n < a +\delta \} \end{equation}
and
\begin{equation}~\label{duboundc}\frac{1}{|B_2|} \displaystyle \int_{B_2} | \g u |^2 dx \leq 1,\end{equation}
\begin{equation}~\label{faboundc} \frac{1}{|B_2|} \displaystyle \int_{B_2} ( |f|^2 + |A - \tilde{A}_{B_2}|^2) dx \leq \delta^2,\end{equation}
where $\tilde{A}_{B_2} =  \sum_{i}\overline{A^i}_{\Omega^i_2} \chi_{\Omega^i_2},$ there exists a piecewise constant matrix $\tilde{A^b}_{B_2} $ as $ \tilde{A^b}_{B_2} = \overline{A^l}_{\Omega^l_2} \chi_{B_2 \cap \{ x_n > a\}}+ \overline{A^m}_{\Omega^m_2} \chi_{B_2 \cap \{ x_n < a\}}$ and for a corresponding weak solution $v$ of
\begin{equation}~\label{defv1}-{\rm{div}}(\tilde{A^b}_{B_2} \g v) = 0 \h\h\h\h in \h\h B_2\end{equation}
such that $$\displaystyle \int_{B_{\frac{4}{3}}}|\g (u- v)|^2 dx \leq \vare^2.$$
\end{cor}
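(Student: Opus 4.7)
The plan is to adapt the compactness argument used in Lemma~\ref{uclosev} by supplementing it with an interior energy estimate (Lemma~\ref{dufu}) in order to upgrade local $L^2$ closeness of $u-v$ to $L^2$ closeness of its gradient on $B_{4/3}$. I would argue by contradiction: assuming the corollary fails, there exist $\varepsilon_0>0$ and sequences $\delta_k\to 0$ together with data $(A_k,u_k,f_k,\Omega^{l,k},\Omega^{m,k},a_k)$ verifying the hypotheses and corresponding solutions $v_k$ of the associated homogeneous equation---chosen as in the construction of Lemma~\ref{uclosev}---such that
$$\int_{B_{4/3}}|\nabla(u_k-v_k)|^2\,dx>\varepsilon_0^2.$$

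From the proof of Lemma~\ref{uclosev}, after extracting subsequences I would inherit: $\tilde u_k\defeq u_k-\overline{u_k}_{B_2}$ converges strongly in $L^2(B_2)$ and weakly in $H^1(B_2)$ to some $u_0$; the matrices $A_k$ converge to a piecewise-constant $A_0$ in $L^2(B_2)$, and (after a further extraction) a.e.\ in $B_2$; $u_0$ is a weak solution of $\mbox{div}(A_0\nabla u_0)=0$ in $B_2$; and $v_k=u_0+\overline{u_k}_{B_2}-h_k$ where the auxiliary function $h_k$ is shown in that proof to satisfy $\|\nabla h_k\|_{L^2(B_2)}\to 0$. Using the identity $\nabla(u_k-v_k)=\nabla(\tilde u_k-u_0)+\nabla h_k$, the task reduces to proving that $\nabla(\tilde u_k-u_0)\to 0$ strongly in $L^2(B_{4/3})$.

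Subtracting the equations satisfied by $\tilde u_k$ and $u_0$ shows that $w_k\defeq\tilde u_k-u_0$ is a weak solution in $B_2$ of
$$-\mbox{div}(A_k\nabla w_k)=\mbox{div}\bigl(f_k+(A_k-A_0)\nabla u_0\bigr).$$
I would then apply Lemma~\ref{dufu} with a cutoff $\varphi\in C_0^\infty(B_{5/3})$ satisfying $\varphi\equiv 1$ on $B_{4/3}$ and $|\nabla\varphi|\leq C$, which yields
$$\int_{B_{4/3}}|\nabla w_k|^2\,dx\leq C\int_{B_2}|f_k|^2\,dx+C\int_{B_2}|A_k-A_0|^2|\nabla u_0|^2\,dx+C\int_{B_2}|w_k|^2\,dx.$$
The first term tends to $0$ since $\|f_k\|_{L^2(B_2)}^2\leq\delta_k^2|B_2|$, and the third tends to $0$ by the strong $L^2(B_2)$ convergence $\tilde u_k\to u_0$.

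The middle term is the main obstacle: $A_k-A_0$ is only small in $L^2$, not in $L^\infty$, so a pure $L^\infty\!\cdot\!L^2$ estimate is too crude, while improving to an $L^q\cdot L^{q'}$ bound with $q>2$ would require higher integrability of $\nabla u_0$ that we have not established. I would resolve it cleanly by dominated convergence: on the chosen subsequence $A_k\to A_0$ a.e., hence $|A_k-A_0|^2|\nabla u_0|^2\to 0$ a.e., and this sequence is dominated by the integrable function $4\Lambda^2|\nabla u_0|^2$. Thus the middle term vanishes as $k\to\infty$, giving $\nabla w_k\to 0$ in $L^2(B_{4/3})$, which together with $\nabla h_k\to 0$ contradicts the assumption and completes the proof.
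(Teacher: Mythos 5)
Your proof is correct, but it takes a genuinely different route from the paper. The paper uses Lemma~\ref{uclosev} as a black box to produce $v$ with $\int_{B_2}|u-v|^2\leq\eta^2$, observes that $u-v$ solves $-\mbox{div}(A\nabla(u-v))=\mbox{div}\bigl(f+(A-\tilde{A^b}_{B_2})\nabla v\bigr)$, and applies the energy estimate (Lemma~\ref{dufu}) directly to $u-v$; the cross term $\int|(A-\tilde{A^b}_{B_2})\nabla v|^2$ is then handled by pulling out $\|\nabla v\|_{L^\infty(B_{3/2})}$ via the Lipschitz bound of Lemma~\ref{dvbound}, reducing matters to $\int_{B_2}|A-\tilde{A^b}_{B_2}|^2$, which is $O(\delta)$ by the BMO-smallness hypothesis together with the measure $O(\delta)$ of the strip where $\tilde{A}_{B_2}\neq\tilde{A^b}_{B_2}$; this gives the explicit quantitative relation $\int_{B_{4/3}}|\nabla(u-v)|^2\leq C(\delta+\eta^2)$. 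You instead reopen the compactness argument inside Lemma~\ref{uclosev}, split $\nabla(u_k-v_k)=\nabla w_k+\nabla h_k$, derive the equation $-\mbox{div}(A_k\nabla w_k)=\mbox{div}(f_k+(A_k-A_0)\nabla u_0)$, and dispatch the corresponding cross term by dominated convergence (a.e.\ convergence of $A_k\to A_0$ along a further subsequence, dominated by $C\Lambda^2|\nabla u_0|^2\in L^1$), avoiding any $L^\infty$ bound on the homogeneous solution's gradient at this step. Both routes are sound; the paper's is more modular, quantitative in $\delta$ and $\eta$, and avoids reopening the subsequence extraction, whereas yours dispenses with Lemma~\ref{dvbound} here (though that lemma is still needed elsewhere, e.g.\ in Lemma~\ref{bigducontrol1}), at the cost of a second, qualitative compactness pass.
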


\begin{proof}
By the Lemma \ref{uclosev},  for any $\eta > 0,$ there exists $ \delta= \delta(\eta) > 0,$ a piecewise constant matrix $  \tilde{A^b}_{B_2} = \overline{A^l}_{\Omega^l_2} \chi_{B_2 \cap \{ x_n > a\}}+ \overline{A^m}_{\Omega^m_2} \chi_{B_2 \cap \{ x_n < a\}}$ and a corresponding weak solution $v$ of $-\mbox{div}(\tilde{A^b}_{B_2} \g v) = 0 $ in $B_2$ such that  $$\displaystyle \int_{B_2}|u-v|^2 dx \leq \eta^2.$$

First we see that $u-v \in H^1(B_2)$ is a weak solution of
\begin{equation}~\label{u-veq} -\mbox{div}(A \g (u-v)) = \mbox{div}(f + (A- \tilde{A^b}_{B_2})\g v) \h\h\h\h\h\h\h \mbox{in} \h\h\h B_2\end{equation}
Now, by (\ref{dufueq}),
\begin{align} \displaystyle \int_{B_{\frac{4}{3}}}|\g ( u-v)|^2 & \leq C(\displaystyle \int_{B_{\frac{3}{2}}}| f + (A- \tilde{A^b}_{B_2})\g v |^2 + | u-v | ^2 dx) \\
& \leq C(\displaystyle \int_{B_{\frac{3}{2}}}|f|^2 dx + \displaystyle \int_{B_{\frac{3}{2}}}|(A-\tilde{A^b}_{B_2})\g v |^2 dx + \displaystyle \int_{B_{\frac{3}{2}}}|u-v|^2 dx) \\
& \leq C(\displaystyle \int_{B_2}|f|^2 + \displaystyle \int_{B_2}|A-\tilde{A^b}_{B_2}|^2 dx + \displaystyle \int_{B_2}|u-v|^2 dx)
\end{align}
Here we used the fact that $v$ is lipschitz, which we showed in Lemma \ref{dvbound}, and (\ref{duboundc}). Also,
\begin{align} \displaystyle \int_{B_2} |f|^2 +| A- \tilde{A^b}_{B_2}|^2 dx  & \leq 2\displaystyle \int_{B_2} (|f|^2 +| A- \tilde{A}_{B_2}|^2) +  | \tilde{A}_{B_2}- \tilde{A^b}_{B_2}|^2\\
&  \leq 2(|B_2|\delta^2 + C(\Lambda) \delta ) \\
& \leq C\delta\h\h\h  \mbox{for a small } \h \delta .
\end{align}
So  $ \| \g ( u-v) \|_{L^2(B_2)}^2 \leq C( \delta + \eta^2) = \vare^2$ by taking $ \eta$ and $\delta$ satisfying the last identity. This completes our proof.
\end{proof}

We can control the measure of the set where $| \g u |$ is quite big as the following lemma.

\begin{lem}\label{bigducontrol1} (cf.~\cite{BW03}) There is a constant $N_1 > 0$ so that for any $\vare > 0$, there exists a small $\delta = \delta(\vare) > 0$ such that for all $A$ with $ A = \sum_{i=0}^{K}A^i \chi_{\Omega^i}$, where $A^i$'s are uniformly elliptic and $(\delta, 4)$-vanishing on $\Omega^i$ for $i = 0 \dots K$ and for any $l,m = 0 \dots K$ and any $|a| < 4$ in appropriate coordinate system
\begin{equation}B_4 \cap \{ x_n > a + \delta \} \subset \Omega^l_4 \subset  B_4 \cap \{ x_n > a -\delta \} \end{equation}
\begin{equation}B_4 \cap \{ x_n < a - \delta \} \subset \Omega^m_4 \subset  B_4 \cap \{ x_n < a +\delta \}, \end{equation} and if $ u $ is a weak solution of $-{\rm{div}}(A \g u ) = {\rm{div}} f $ in $\Omega \supset B_4$ and if
\begin{equation}~\label{noempty1}\{ x \in B_1 : \M(|\g u |^2 ) \leq 1 \} \cap \{ x \in B_1 : \M (|f|^2)\leq \delta^2 \} \neq \emptyset, \end{equation}
then
\begin{equation}~\label{measuresmall1}| \{ x \in \Omega : \M (| \g u |^2)(x) > N_1^2\} \cap B_1| < \vare|B_1|.\end{equation}
\end{lem}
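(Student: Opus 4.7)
The plan is to combine the comparison estimate from Corollary~\ref{duclosedv} with the $L^\infty$ gradient bound from Lemma~\ref{dvbound} and the weak type $(1,1)$ estimate for the Hardy--Littlewood maximal function from Theorem~\ref{pp11}. By hypothesis~(\ref{noempty1}) there is a point $x_0 \in B_1$ with $\M(|\g u|^2)(x_0) \leq 1$ and $\M(|f|^2)(x_0) \leq \delta^2$. Since $x_0 \in B_1$ we have $B_2 \subset B_3(x_0)$, so the definition of the maximal function gives $\frac{1}{|B_2|}\int_{B_2}|\g u|^2\,dx \leq (3/2)^n$ and $\frac{1}{|B_2|}\int_{B_2}|f|^2\,dx \leq (3/2)^n\delta^2$. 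The $(\delta,4)$-vanishing hypothesis applied block by block yields $\frac{1}{|B_2|}\int_{B_2}|A - \tilde{A}_{B_2}|^2\,dx \leq C\delta^2$. Dividing $u$ and $f$ by $(3/2)^{n/2}$, which preserves the form of the equation, places us in the setup of Corollary~\ref{duclosedv}.

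Given $\eta > 0$, Corollary~\ref{duclosedv} produces $\delta = \delta(\eta)$, a piecewise constant matrix $\tilde{A}^b_{B_2}$, and a weak solution $v$ of $-\mbox{div}(\tilde{A}^b_{B_2}\,\g v) = 0$ in $B_2$ with $\int_{B_{4/3}}|\g(u-v)|^2\,dx \leq C\eta^2$. For each $y \in B_{4/3}$ the ball $B_{1/3}(y)$ is contained in $B_2$, so applying Lemma~\ref{dvbound} to $v - \bar{v}_{B_{1/3}(y)}$ (after translating and rescaling $B_{1/3}(y)$ to $B_1$) and using Poincar\'e's inequality gives the pointwise bound
\[ |\g v(y)| \leq C\|\g v\|_{L^2(B_{1/3}(y))} \leq C\|\g v\|_{L^2(B_{4/3})} \leq C\bigl(\|\g u\|_{L^2(B_{4/3})} + \eta\bigr) =: N_0, \]
where $N_0$ is independent of $\eta$ for $\eta \leq 1$. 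Hence $\|\g v\|_{L^\infty(B_{4/3})} \leq N_0$.

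Set $N_1^2 := \max\{9^n,\,4N_0^2\} + 1$ and take any $x \in B_1$ with $\M(|\g u|^2)(x) > N_1^2$. For every radius $r \geq 1/3$, since $|x - x_0| < 2$, the ball $B_r(x)$ lies in a ball of radius $\max(3,3r)$ centered at $x_0$, so
\[ \frac{1}{|B_r(x)|}\int_{B_r(x)}|\g u|^2\,dy \leq 9^n\,\M(|\g u|^2)(x_0) \leq 9^n < N_1^2. \]
Thus the supremum defining $\M(|\g u|^2)(x)$ is realized at some $r < 1/3$, for which $B_r(x) \subset B_{4/3}$. On such a ball the inequality $|\g u|^2 \leq 2|\g u - \g v|^2 + 2N_0^2$ gives
\[ N_1^2 < \frac{1}{|B_r(x)|}\int_{B_r(x)}|\g u|^2\,dy \leq 2\,\M\bigl(\chi_{B_{4/3}}|\g u - \g v|^2\bigr)(x) + 2N_0^2, \]
so $\M(\chi_{B_{4/3}}|\g u - \g v|^2)(x) > N_0^2$, i.e.\ every such $x$ lies in the set $\{\M(\chi_{B_{4/3}}|\g u - \g v|^2) > N_0^2\}$.

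The weak type $(1,1)$ bound from Theorem~\ref{pp11} then yields
\[ \bigl|\{x \in B_1 : \M(|\g u|^2)(x) > N_1^2\}\bigr| \leq \frac{C}{N_0^2}\int_{B_{4/3}}|\g u - \g v|^2\,dx \leq \frac{CC'\eta^2}{N_0^2}, \]
which is less than $\vare|B_1|$ once $\eta$ (and hence $\delta$) is taken small enough. The principal technical obstacle is the second paragraph: $\|\g v\|_{L^\infty}$ has to be controlled on $B_{4/3}$ (the ball on which Corollary~\ref{duclosedv} gives the comparison), while Lemma~\ref{dvbound} directly yields an $L^\infty$ bound only on a half-sized ball. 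The centered rescaling of Lemma~\ref{dvbound} together with the Poincar\'e step that trades $\|v\|_{L^2}$ for $\|\g v\|_{L^2}$ is what bridges this gap and produces a constant $N_0$ that is independent of $\eta$.
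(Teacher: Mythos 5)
Your proof follows the same strategy as the paper's: normalize so that the hypotheses of Corollary~\ref{duclosedv} hold on $B_2$, produce the comparison solution $v$, obtain a uniform $L^\infty$ bound $N_0$ on $\g v$ from Lemma~\ref{dvbound}, split radii into small and large, and finish with the weak $(1,1)$ estimate. The radius dichotomy ($r<1/3$ vs.\ $r\ge 1/3$, giving $9^n$) differs only cosmetically from the paper's ($r\le 1/2$ vs.\ $r>1/2$, giving $5^n$), and your use of $\M(\chi_{B_{4/3}}|\g u-\g v|^2)$ throughout is actually more consistent than the paper's (which writes $\M_{B_2}$ but integrates over $B_{4/3}$ in the weak $(1,1)$ step).

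One step in your write-up, however, does not hold as stated. For $y\in B_{4/3}$ you have $B_{1/3}(y)\subset B_{5/3}\subset B_2$, not $B_{1/3}(y)\subset B_{4/3}$; consequently the chain $\|\g v\|_{L^2(B_{1/3}(y))}\le\|\g v\|_{L^2(B_{4/3})}$ fails, and $\|\g v\|_{L^2(B_{4/3})}$ is the only quantity that Corollary~\ref{duclosedv} controls by $\|\g u\|_{L^2}+\eta$. To repair this you must control $\|\g v\|_{L^2}$ on the larger ball: since $v$ solves the homogeneous frozen equation in $B_2$, Caccioppoli (Lemma~\ref{dufu} with $f=0$) gives $\|\g v\|_{L^2(B_{5/3})}\le C\|v\|_{L^2(B_2)}$; then normalize $u$ so $\bar u_{B_2}=0$ (harmless, since the equation and $\g u$ are unchanged), use Poincar\'e to get $\|u\|_{L^2(B_2)}\le C\|\g u\|_{L^2(B_2)}\le C$, and combine with $\|u-v\|_{L^2(B_2)}\le\eta$ to bound $\|v\|_{L^2(B_2)}$. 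With that $N_0$ is a universal constant and your remaining steps go through. (The paper's proof asserts the existence of $N_0$ at~(\ref{dv'slinftysmall}) without spelling out this normalization, so the gap is shared; your version, by making the Poincar\'e step explicit, exposes it but also points to the fix.)
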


\begin{proof}
By (\ref{noempty1}), there is a point $x_0 \in B_1$ such that for all $r > 0$,
\begin{equation}~\label{du1fsmall}\frac{1}{|B_r|} \displaystyle \int_{B_r(x_0) \cap \Omega} | \g u|^2 dx \leq 1, \h\h\h  \frac{1}{|B_r|} \displaystyle \int_{B_r(x_0) \cap \Omega} |f|^2 dx \leq \delta^2.\end{equation}
Since $B_2(0) \subset B_3(x_0) $, we have by (\ref{du1fsmall}),
\begin{equation}~\label{fsmall}\frac{1}{|B_2|} \displaystyle \int_{B_2} |f|^2 dx \leq \frac{|B_3|}{|B_2|} \frac{1}{|B_3|} \displaystyle \int_{B_3(x_0)} |f|^2 dx \leq (\frac{3}{2})^n \delta^2.\end{equation}
Similarly, we see that
\begin{equation}~\label{dusmall} \frac{1}{|B_2|} \displaystyle \int_{B_2} |\g u |^2 dx \leq (\frac{3}{2})^n. \end{equation}
In view of (\ref{fsmall}) and (\ref{dusmall}), and from the assumption on $A$, we can apply Corollary \ref{duclosedv} with $u$ replaced by $ (\frac{2}{3})^n u $ and $f$ replaced by $(\frac{2}{3})^nf$, respectively, to find that for any $ \eta > 0$, there exists a small $ \delta(\eta) $ and a corresponding weak solution $v$ of
\begin{equation}-\mbox{div}(\tilde{A^b}_{B_2} \g v) = 0\end{equation}
in $ B_2$ such that
\begin{equation}~\label{du-vsmall}\displaystyle \int_{B_{\frac{4}{3}}} | \g ( u-v)|^2 dx \leq \eta^2,\end{equation}
provided that
\begin{equation}~\label{fa-asmall}\frac{1}{|B_2|} \displaystyle \int_{B_2} ( |f|^2 + |A - \tilde{A}_{B_2}|^2) dx \leq \delta^2.\end{equation}
By the interior $W^{1, \infty}$ regularity that we proved in Lemma \ref{dvbound}, we can find a constant $ N_0$ such that
\begin{equation}~\label{dv'slinftysmall}\| \g v \|_{L^{\infty}(B_{\frac{3}{2}})} \leq N_0.\end{equation}

Now we will show that
\begin{equation}~\label{claim1}\{ x \in B_1 : \M|\g u |^2 > N_1^2\} \subset \{ x \in B_1 : \M_{B_2}|\g ( u-v)|^2 > N_0^2\}\end{equation}
for $ N_1^2 \defeq \mbox{max}\{ 5^n, 4N_0^2\}.$ To do this, suppose that
\begin{equation}~\label{x1N0} x_1 \in \{ x \in B_1 : \M_{B_2}(|\g(u-v)|)^2(x) \leq N_0^2\}. \end{equation}
For $ r \leq \frac{1}{2} , $ $ B_r(x_1) \subset B_{\frac{3}{2}}, $ and by (\ref{dv'slinftysmall}) and (\ref{x1N0}), we have
\begin{equation}~\label{rlessthan2}\frac{1}{|B_r|}\displaystyle \int_{B_r(x_1)} |\g u |^2 dx \leq \frac{2}{|B_r|}\displaystyle \int_{ B_{\frac{3}{2}}} (|\g(u-v)|^2 + |\g v |^2) \leq 4N_0^2.\end{equation}
For $ r > \frac{1}{2}$, $ B_r(x_1) \subset B_{5r}(x_0),$  and by (\ref{du1fsmall}), we have
\begin{equation}~\label{rbiggerthan2} \frac{1}{|B_r|}\displaystyle \int_{B_r(x_1)} |\g u |^2 dx \leq \frac{5^n}{|B_{5r}|}\displaystyle \int_{B_{5r}(x_0)\cap \Omega} |\g u |^2 dx \leq 5^n. \end{equation}
Then (\ref{rlessthan2}) and (\ref{rbiggerthan2}) show
\begin{equation}~\label{x1N1} x_1 \in \{ x \in B_1 : \M(|\g u |)^2 \leq N_1^2\}.\end{equation}
Thus assertion (\ref{claim1}) follows from (\ref{x1N0}) and (\ref{x1N1}). 

By (\ref{claim1}), weak 1-1 estimates and (\ref{du-vsmall}), we obtain
\begin{align*}
|\{ x \in B_1 : \M(|\g u |)^2 > N_1^2\}| & \leq |\{ x \in B_1 : \M_{B_2}(|\g(u-v)|)^2 > N_0^2\}| \\
&\leq \frac{C}{N_0^2} \displaystyle \int_{B_{\frac{4}{3}}} |\g(u-v)|^2 dx \\
&\leq \frac{C}{N_0^2}\eta^2 = \vare |B_1|,
\end{align*}
by taking small $ \eta$ satisfying the last identity above. Now Corollary \ref{duclosedv} gives the desired $\delta$.
\end{proof}

\begin{cor}~\label{anyscaleball} There is a constant $ N_1 >0 $ so that for any $\vare, r \in (0,1]$, there exists a small $ \delta= \delta(\vare) > 0$ such that for all $A$ with $ A = \sum_{i=0}^{K}A^i \chi_{\Omega^i}$, where $A^i$'s are uniformly elliptic and $(\delta, 4)$-vanishing on $\Omega^i$ for $i = 0 \dots K$ and for any $l,m = 0 \dots K$ and any $|a| < 4r$ in appropriate coordinate system
\begin{equation}B_{4r} \cap \{ x_n > a + \delta r \} \subset \Omega^l_{4r}\subset  B_{4r} \cap \{ x_n > a -\delta r\} \end{equation}
\begin{equation}B_{4r} \cap \{ x_n < a - \delta r\} \subset \Omega^m_{4r}\subset  B_{4r} \cap \{ x_n < a +\delta r\} \end{equation} and if $ u $ is a weak solution of $-{\rm{div}}(A \g u ) = {\rm{div}} f $ in $\Omega \supset B_{4r}$ and if
\begin{equation}~\label{noemptyr} \{ x \in B_r : \M(|\g u |^2 ) \leq 1 \} \cap \{x \in B_r: \M (|f|^2)\leq \delta^2 \} \neq \emptyset, \end{equation}
then
\begin{equation}~\label{measuresmallr}| \{ x \in \Omega : \M (| \g u |^2)(x) > N_1^2\} \cap B_r| < \vare|B_r|.\end{equation}
\end{cor}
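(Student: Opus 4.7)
The plan is a standard rescaling argument that reduces Corollary \ref{anyscaleball} to Lemma \ref{bigducontrol1}. Fix $r\in(0,1]$ and set
\[
\tilde u(y)\defeq \tfrac{1}{r}u(ry),\qquad \tilde f(y)\defeq f(ry),\qquad \tilde A(y)\defeq A(ry),\qquad \tilde\Omega^i\defeq r^{-1}\Omega^i.
\]
A direct change of variables shows that $\tilde u$ is a weak solution of $-{\rm div}(\tilde A\,\g \tilde u)={\rm div}\,\tilde f$ in $r^{-1}\Omega\supset B_4$, and that $\nabla\tilde u(y)=\nabla u(ry)$, so $|\nabla\tilde u|^2(y)=|\nabla u|^2(ry)$ and similarly $|\tilde f|^2(y)=|f|^2(ry)$.

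Next I would verify that the structural hypotheses of Lemma \ref{bigducontrol1} hold for $\tilde A$ and $\{\tilde\Omega^i\}$. Uniform ellipticity is obviously preserved. For the BMO condition, the change of variable $z\mapsto rz$ in the definition of $(\delta,R)$-vanishing turns $(\delta,4)$-vanishing of $A^i$ on $\Omega^i$ into $(\delta,4/r)$-vanishing of $\tilde A^i$ on $\tilde\Omega^i$; since $r\leq 1$ this is stronger than $(\delta,4)$-vanishing. For the flatness condition, with $\tilde a\defeq a/r$ one has $|\tilde a|<4$ and
\[
B_{4r}\cap\{x_n>a+\delta r\}\subset \Omega^l_{4r}\subset B_{4r}\cap\{x_n>a-\delta r\}
\]
rescales exactly to $B_4\cap\{y_n>\tilde a+\delta\}\subset \tilde\Omega^l_4\subset B_4\cap\{y_n>\tilde a-\delta\}$, and similarly for $\tilde\Omega^m_4$.

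The key observation is the scaling invariance of the Hardy--Littlewood maximal function: for any locally integrable $g$ on $\bbr^n$, $\M(g(r\cdot))(y)=(\M g)(ry)$. Applying this to $|\nabla u|^2$ and to $|f|^2$, one sees that the non-emptiness hypothesis (\ref{noemptyr}) for $u$ on $B_r$ is equivalent to (\ref{noempty1}) for $\tilde u$ on $B_1$: indeed, if $x_0\in B_r$ satisfies $\M(|\nabla u|^2)(x_0)\leq 1$ and $\M(|f|^2)(x_0)\leq\delta^2$, then $y_0\defeq x_0/r\in B_1$ satisfies the corresponding bounds for $\tilde u$ and $\tilde f$.

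Given $\vare$, choose $\delta=\delta(\vare)$ as provided by Lemma \ref{bigducontrol1}. Applying that lemma to $\tilde u$ on $B_4$ yields $|\{y\in B_1:\M(|\nabla\tilde u|^2)(y)>N_1^2\}|<\vare|B_1|$. Undoing the dilation via $x=ry$ multiplies both sides by $r^n$, giving exactly (\ref{measuresmallr}). There is no real obstacle here; the only points that require any care are checking that each hypothesis rescales correctly (in particular that $(\delta,4)$-vanishing is preserved because $r\leq 1$, and that the level $|a|<4r$ scales to $|\tilde a|<4$), which is what makes the constants $N_1$ and $\delta(\vare)$ independent of $r$.
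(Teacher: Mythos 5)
Your proposal is correct and is exactly the scaling argument that the paper invokes (its entire proof is ``The proof is given by Lemma \ref{bigducontrol1} and a scaling argument''). You have simply supplied the routine verifications---that the dilation preserves the weak formulation, that $(\delta,4)$-vanishing improves to $(\delta,4/r)$-vanishing for $r\le 1$, that the flatness bands and the Hardy--Littlewood maximal function rescale correctly---which is precisely what makes $N_1$ and $\delta(\vare)$ independent of $r$.
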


\begin{proof}
The proof is given by Lemma \ref{bigducontrol1} and a scaling argument.
\end{proof}



To use the modified vitali covering lemma, we need to show Theorem \ref{110main} holds for any ball $B_r(x)$ for $r \in (0, 1]$ and $ x \in \Omega$. If $B_r(x)$ intersects with only one subdomain $\Omega^l$ then the proof of Theorem \ref{110main} comes directly from Lemma \ref{anyscaleball} for $l=m$.
If $B_r(x)$ intersects with two subdomains $\Omega^l$ and $\Omega^0$, then the proof of Theorem \ref{110main} also comes directly from Lemma \ref{anyscaleball} for $m=0$.

Then next natural question would be how many subdomains can intersect with $B_r(x)$ for $r \in (0, 1]$ and $ x \in \Omega$ when $\d \Omega^i$'s are flat enough. Next lemma will be used to show that a ball can intersect with at most three subdomains.

\begin{lem}\label{notmore3} $H_i$'s for $i = 1, \dots, K$ are half spaces. If $\{ H_i \cap B_2\}_i$ are disjoint. Then at most two half spaces can intersect with $B_1$.
\end{lem}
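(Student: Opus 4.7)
The plan is to argue by contradiction. Suppose three of the half-spaces, say $H_1$, $H_2$, $H_3$, all intersect $B_1$. Write each as $H_i=\{x\in\bbr^n:\langle x,n_i\rangle\geq a_i\}$ with $|n_i|=1$; the condition $H_i\cap B_1\neq\emptyset$ is then equivalent to $a_i<1$. The goal will be to derive an impossibly large negative angular separation between the three inward normals $n_1,n_2,n_3$.

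First, I note that at most one of the $a_i$'s can be nonpositive: otherwise the origin would lie in two of the $H_i$'s, contradicting the disjointness of $\{H_i\cap B_2\}_i$. The key pairwise estimate I will establish is that $c_{ij}:=\langle n_i,n_j\rangle<-\tfrac12$ for every pair $(i,j)$. Since $H_i\cap H_j\cap B_2=\emptyset$, the distance from $0$ to $H_i\cap H_j$ is at least $2$. Now if the projection of $0$ onto one of the two bounding hyperplanes (namely $a_i n_i$ or $a_j n_j$) happens to lie in the opposite half-space, this distance would be at most $\max(|a_i|,|a_j|)<1<2$, which is an immediate contradiction. These cases therefore do not occur, and the minimum is attained on $\partial H_i\cap\partial H_j$; a short Lagrange-multiplier computation gives
\[
\min_{x\in H_i\cap H_j}|x|^2=\frac{a_i^2+a_j^2-2a_ia_jc_{ij}}{1-c_{ij}^2}.
\]
Requiring this to be at least $4$ reduces to the quadratic inequality $4c_{ij}^2-2a_ia_jc_{ij}+(a_i^2+a_j^2-4)\geq 0$, whose value at $c_{ij}=-\tfrac12$ equals $a_i^2+a_j^2+a_ia_j-3$; since $a_i,a_j<1$ and at most one of them is nonpositive, this quantity is strictly negative, so $-\tfrac12$ lies strictly between the two roots. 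The feasibility conditions derived above rule out the larger root, forcing $c_{ij}$ strictly below the smaller one, hence $c_{ij}<-\tfrac12$.

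Once the three inequalities $c_{12},c_{13},c_{23}<-\tfrac12$ are in hand, the contradiction is immediate:
\[
0\leq |n_1+n_2+n_3|^2=3+2(c_{12}+c_{13}+c_{23})<3-3=0.
\]
The main obstacle I anticipate is the bookkeeping in the pairwise step: the parallel case $c_{ij}=1$ (where $H_i\cap H_j$ is itself a half-space meeting $B_1$, immediately violating disjointness), the anti-parallel case $c_{ij}=-1$ (where the desired bound holds automatically and disjointness in $B_2$ holds iff $a_i+a_j>0$), and the mixed-sign case where exactly one of $a_i,a_j$ is nonpositive. In each subcase disjointness either fails outright or reduces to the same quadratic inequality, so the uniform bound $c_{ij}<-\tfrac12$ persists and the contradiction goes through.
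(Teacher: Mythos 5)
Your argument is correct in outline and takes a genuinely different, more algebraic route than the paper. The paper's proof passes to the two-dimensional plane $\mathcal{T}$ through three non-collinear points $p_i\in H_i\cap B_1$, producing half-planes $h_i$ inside concentric disks $D_1\subset D_2$ whose radii are in ratio at least $2$; it pushes the bounding lines until tangent to $D_1$ and observes that each cap cut from $\partial D_2$ must occupy at least one third of the circle, so three disjoint caps force a degenerate configuration. You instead parametrize $H_i=\{x:\langle x,n_i\rangle\geq a_i\}$, prove the pairwise angular bound $c_{ij}=\langle n_i,n_j\rangle<-\tfrac12$ by minimizing $|x|^2$ over $H_i\cap H_j$ via a Lagrange computation, and close with $0\leq|n_1+n_2+n_3|^2=3+2(c_{12}+c_{13}+c_{23})<0$. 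This stays in arbitrary dimension, avoids the somewhat informal tangency-and-arc-length step in the paper, and makes the quantitative angular separation explicit.

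Two details need tightening. First, in the early-exit case the bound should read $\max(|a_i|,|a_j|)<2$ rather than $<1$: a priori you only know $a_i<1$ (from $H_i\cap B_1\neq\emptyset$) and $a_i>-2$ (else $B_2\subset H_i$, contradicting disjointness), so for the one possibly nonpositive offset you can only assert $|a_i|<2$; this still yields the contradiction with ${\rm dist}(0,H_i\cap H_j)\geq 2$. Second, the claim that $q(-\tfrac12)=a_i^2+a_j^2+a_ia_j-3<0$ \emph{because} $a_i,a_j<1$ with at most one nonpositive is not sufficient on its own: with $a_i=-1.8$ and $a_j=0.01$ the expression is positive. You must also invoke the feasibility inequalities that placed you in the Lagrange case in the first place: from $a_jc_{ij}<a_i$ and $c_{ij}>-1$ one gets $a_i>-a_j$, hence $|a_i|<a_j<1$ whenever $a_i\leq 0$, and only then does $a_i^2+a_j^2+a_ia_j<2<3$ follow. (In the numerical example, $a_jn_j$ lies in $H_i$ automatically, so the early-exit contradiction fires and the quadratic is never reached.) With those two repairs the argument is complete.
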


\begin{proof}
Assume there are three half spaces, say $H_1, H_2$ and $H_3$ such that $B_2 \cap H_i$'s are disjoint and $H_i \cap B_1 \neq \emptyset$ for $i = 1,2,3.$
Let $ p_i \in H_i \cap B_1 $ for $i = 1,2,3.$. Note that since half spaces are disjoint in $B_2$ these points are not collinear. Let $\mathcal{T}$ be the two dimensional plane containing $p_1, p_2, p_3$. For $j=1,2$ let $\mathcal{D}_j=\mathcal{T}\cap B_j$ which are indeed two dimensional balls. Let $r_j=\mbox{radius of} \h \mathcal{D}_j$ for $j=1,2.$ Note that $r_2 \geq 2r_1.$

Let $h_i \defeq \mathcal{T} \cap  H_i $ and $l_i \defeq \mathcal{T} \cap \d H_i=\d h_i .$ We have
\begin{itemize}
\item[(1)]$ p_i\in l_i \cap \mathcal{D}_1 $ for $i=1,2,3$
\item[(2)]$h_i \cap \mathcal{D}_2$'s are disjoint for $ i = 1,2,3.$
\end{itemize}


\noindent
Pushing $l_i$'s into $h_i$ by $\delta_i>0$, we may assume that $l_i$'s are tangent to the $\mathcal{D}_1$ and $p_i \in \d \mathcal{D}_1 $ for $i = 1,2,3.$ Let also $A_i$ and $B_i$ be the points where $l_i$ intersects $\d\mathcal{D}_2$ for $i = 1,2,3.$ Let $h_i \cap \d D_2 = \stackrel \frown{A_iB_i}$.


Note that $\stackrel \frown{A_iB_i}$ for $i = 1,2,3.$ are disjoint on $\d D_2$. Since $r_2 \geq 2r_1$ and $l_i$'s are tangent to $D_1$,
\begin{equation}~\label{1/3}\frac{\mbox{length of }\stackrel \frown{A_iB_i}}{\mbox{length of }\d D_2} \geq \frac{1}{3}, \hspace{1.5cm} \mbox{for} \h\h i = 1,2,3.\end{equation}
The above is a strict inequality if $ r_2 > 2r_1$, which is a contradiction to the fact that $\stackrel \frown{A_iB_i}$'s are disjoint on $\d D_2$. If $r_2 = 2r_1$, (\ref{1/3}) is an equality. In this case $l_i$'s end points meet each other. So we cannot push $l_i$ outward from $h_i$ which means $\delta_i = 0$ for $i = 1,2,3.$
\end{proof}

So now we consider the case that a ball intersect with three subdomains $\Omega^l$, $\Omega^0$ and $\Omega^m$ for any $l,m = 1 \dots K.$ To prove Theorem \ref{110main} for this case, our goal is to show Lemma \ref{bigducontrol1} holds for this case as well. Roughly there can be two different cases; The first case is when  $\Omega^l$ and  $\Omega^m$ are quite close and the second case is when $\Omega^l$ and  $\Omega^m$ are not so close.

\begin{lem}~\label{bigducontr} There exists a constant $N_1 > 0$ so that for any $\vare > 0$, there exists a small $\delta = \delta(\vare) > 0$ and for all $\Omega \supset  B_{4}$ and subdomain $\Omega^i$ for all $i = 1, \dots, K$ and $\Omega$ are $(\delta, 9)$-flat and for all $A$ where $A^i$'s are uniformly elliptic and $(\delta, 9)$ vanishing on $\Omega^i$, and if $ u $ is a weak solution of $-{\rm{div}}(A \g u ) = {\rm{div}} f $ in $\Omega \supset  B_{4}$ and if

\begin{equation}~\label{noempty3}\{ x \in B_1 : \M(|\g u |^2 ) \leq 1 \} \cap \{x \in B_1 : \M (|f|^2)\leq \delta^2 \} \neq \emptyset,\end{equation}
then
\begin{equation}~\label{measuresmall3}| \{ x \in \Omega  : \M (| \g u |^2)(x) > N_1^2\} \cap B_1| < \vare|B_1|.\end{equation}
\end{lem}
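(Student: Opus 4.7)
The plan is to follow the blueprint of Lemma \ref{bigducontrol1} verbatim, and localize the novelty to the approximation step (Lemma \ref{uclosev} and Corollary \ref{duclosedv}) together with the Lipschitz bound (Lemma \ref{dvbound}), both of which must be upgraded to accommodate the possibility that $B_1$ meets three regions rather than two. By Lemma \ref{notmore3}, under $(\delta,9)$-Reifenberg flatness of each $\Omega^i$ inside $B_9$, the ball $B_1$ meets at most two of the subdomains $\Omega^1,\dots,\Omega^K$ together with $\Omega^0$. If it meets only one or two of the $\Omega^i$'s, then Corollary \ref{anyscaleball} applies directly (taking $l=m$ or $m=0$). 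So the new content concerns the case where $B_1$ meets exactly three regions $\Omega^l,\Omega^0,\Omega^m$ with $l\neq m$ in $\{1,\dots,K\}$; flatness plus disjointness of $\Omega^l$ and $\Omega^m$ in $B_4$ forces their half-space approximations to be almost parallel, so one may pick a coordinate system in which $B_2\cap\{x_n>a_1+\delta\}\subset \Omega^l_2\subset B_2\cap\{x_n>a_1-\delta\}$ and $B_2\cap\{x_n<a_2-\delta\}\subset\Omega^m_2\subset B_2\cap\{x_n<a_2+\delta\}$ for some $-2<a_2<a_1<2$.

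First I would extend Lemma \ref{dvbound} to a three-slab piecewise constant matrix
$$\bar A=\bar A^l\chi_{\{x_n>a_1\}}+\bar A^0\chi_{\{a_2<x_n<a_1\}}+\bar A^m\chi_{\{x_n<a_2\}}.$$
Since all jumps happen in the $x_n$-direction, the tangential difference quotients $D_i^h v$ with $i<n$ still solve $\mathrm{div}(\bar A\,\nabla D_i^h v)=0$; iterating this together with Lemma \ref{dufu} and the De Giorgi--Nash estimate exactly as in the proof of Lemma \ref{dvbound} gives smoothness in tangential directions. Writing $\tilde v=v-g$ with $g(x',x_n)=v(x',0)$ (or $v(x',a_j)$ near each interface) reduces the equation to a divergence equation with smooth right-hand side in each of the three slabs, which yields the desired $C^{1,\gamma}$ regularity inside each slab and hence $\|\nabla v\|_{L^\infty(B_{1/2})}\leq C\|v\|_{L^2(B_1)}$. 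The same proof handles the degenerate subcases $|a_1-a_2|\to 0$ or $|a_j|\to 1$, which simply reduce to the two-slab case already covered by Lemma \ref{dvbound}.

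Next I would extend Lemma \ref{uclosev} and Corollary \ref{duclosedv} to the three-region setting: under the flatness and $(\delta,9)$-vanishing hypotheses, there exists a three-slab piecewise constant matrix $\tilde A^b_{B_2}$ with values $\overline{A^l}_{\Omega^l_2},\overline{A^0}_{\Omega^0_2},\overline{A^m}_{\Omega^m_2}$ on the three slabs and a weak solution $v$ of $-\mathrm{div}(\tilde A^b_{B_2}\nabla v)=0$ in $B_2$ such that $\int_{B_{4/3}}|\nabla(u-v)|^2\,dx\leq \varepsilon^2$. The compactness argument is identical to that in Lemma \ref{uclosev}: if not, take sequences $A_k,u_k,f_k$ along which the conclusion fails; pass to weak $H^1(B_2)$ and strong $L^2(B_2)$ limits for $u_k-\overline{u_k}_{B_2}$, extract an $L^\infty$ limit $A_0$ of the piecewise constant matrices $\tilde A_k^b$, obtain a limit equation $-\mathrm{div}(A_0\nabla u_0)=0$, and correct via the auxiliary problem $-\mathrm{div}(\tilde A_k^b\nabla h_k)=\mathrm{div}((\tilde A_k^b-A_0)\nabla u_0)$ to contradict the failure assumption. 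With the extended Corollary \ref{duclosedv} available and the extended Lemma \ref{dvbound} providing $\|\nabla v\|_{L^\infty(B_{3/2})}\leq N_0$, the proof of Lemma \ref{bigducontrol1} carries through verbatim: split radii $r\leq 1/2$ and $r>1/2$ to prove the inclusion
$$\{x\in B_1:\mathcal{M}(|\nabla u|^2)>N_1^2\}\subset\{x\in B_1:\mathcal{M}_{B_2}(|\nabla(u-v)|^2)>N_0^2\}$$
with $N_1^2=\max\{5^n,4N_0^2\}$, then apply the weak $(1,1)$ estimate and choose $\eta$ (and the corresponding $\delta$) so that $C\eta^2/N_0^2=\varepsilon|B_1|$.

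The main obstacle is making the three-slab approximation argument uniform in the geometry: one must allow the middle slab to degenerate ($|a_1-a_2|\to 0$) or an interface to exit $B_2$ without losing the Lipschitz bound on the comparison solution $v$ and without losing control of the matrix convergence $\tilde A_k^b\to A_0$ in the compactness step. Fortunately both degenerations reduce back to the original two-slab Lemma \ref{dvbound}, so a single constant $N_0$ (and hence a single $N_1$) suffices, matching the author's dichotomy between $\Omega^l$ and $\Omega^m$ being close versus not close.
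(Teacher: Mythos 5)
Your strategy is a genuinely different route from the paper's. The paper never proves a three-slab Lipschitz or compactness lemma; instead it splits the proof of Lemma \ref{bigducontr} into two geometric regimes. When $\mathrm{dist}(\Omega^l,\Omega^m)<\gamma$ in $B_1$, the paper uses a Harnack inequality for the (nonnegative harmonic) distance function between the two approximating hyperplanes $\p_l^{9\delta}$ and $\p_m^{9\delta}$ to show that the hyperplanes are globally within $C(\delta+\gamma)$ of each other in $B_4$; choosing $\delta_0,\gamma_0$ so small that $C(\delta_0+\gamma_0)$ fits the tolerance of Lemma \ref{bigducontrol1}, the middle strip $\Omega^0$ is absorbed into the error band and the two-slab lemma applies as is. When $\mathrm{dist}(\d\Omega^l,\d\Omega^m)\geq\gamma_0$, the paper covers $B_1$ by balls of radius comparable to $\gamma_0$, each of which meets at most two of the three subdomains, locates an anchor point in each such ball by a weak-$(1,1)$ pigeon-hole estimate (giving an enlarged level $\lambda_1 N_1^2$ or $\lambda_2 N_1^2$), applies Corollary \ref{anyscaleball} on each ball, and sums via the Vitali covering lemma. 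The net effect is that all analysis happens with exactly two slabs, at various small scales.

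The gap in your proposal sits exactly where you flag it as ``fortunate'': the degeneration $|a_1-a_2|\to 0$ of the three-slab Lipschitz estimate. You claim it ``simply reduces to the two-slab case already covered by Lemma \ref{dvbound},'' but a three-slab matrix with $0<a_1-a_2\ll 1$ is not a two-slab matrix, and the argument of Lemma \ref{dvbound} does not degenerate gracefully. The vertical regularity in that proof is obtained near each interface separately, using $g(x',x_n)=v(x',c)$ with $c$ on the interface to produce $\tilde v$ with homogeneous Dirichlet data on $\{x_n=c\}$ and a constant-coefficient equation on one side. For three slabs with a thin middle layer, the constant-coefficient half-spaces on which you would run this argument collide: there is no neighborhood of the top interface lying entirely in $\{x_n>a_2\}$ away from the bottom interface. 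A limiting/compactness argument does not help either, because $L^\infty$ bounds on $\nabla v$ are not preserved under weak $H^1$ limits, so the blow-up of the Lipschitz constant as $a_1-a_2\to 0$ is not directly contradicted by the two-slab estimate in the limit. To close this gap you would essentially have to re-do what the paper's Case~1 does --- compare the three-slab solution with a two-slab solution and bound the discrepancy --- which is the content of the Harnack reduction in the paper. So a complete version of your route does not really avoid the paper's dichotomy; it merely relocates the dichotomy from the scale of Lemma \ref{bigducontr} into a new internal dichotomy inside your extended Lemma \ref{dvbound}. The paper's covering argument is the cleaner route precisely because it never needs uniformity of the approximation lemma across a shrinking middle slab.
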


\begin{proof}
If $B_{4}$ intersects with two subdomains, then we are done by Lemma \ref{bigducontrol1}.

Suppose $B_{4}$ intersects with three subdomains, say $\Omega^l, \Omega^0$ and $\Omega^m$. First assume that dist$(\Omega^l, \Omega^m) < \gamma $ in $B_1$ for some small $ \gamma > 0 $. Since dist$(\Omega^l, \Omega^m) < \gamma $ in $ B_1$, there exist $p_l \in \d \Omega^l \cap B_1$ and $p_m \in \d \Omega^m \cap B_1$ such that dist$(p_l, p_m) < \gamma$.  Also assume that $ \Omega^l, \Omega^m $ are $(\delta, 9) $-Reifenberg flat for a $\delta$ with $ \gamma < \delta << 1 $. So for each $p_i$, $i = l,m$, there exist $(n-1)$ dimensional hyper plane $\p_i$ such that
\begin{equation}~\label{4delta}D[\d \Omega^i \cap B_9(p_i), \p_i \cap B_9(p_i)] \leq 9\delta, \h\h\h \mbox{ for } i = l,m
\end{equation}
where $D$ denotes the Hausdorff distance. In other words, the boundary of $\Omega^i$ is squeezed between $\p_i$ and $\p_i^{9\delta}$ which is the translation of $\p_i$ by $9\delta$ in the normal direction of $\p_i$ inward $\Omega^i$ for $i = l, m$. We can choose a coordinate system such that the normal direction of $\p_l^{9\delta}$ is the $x_n$ axis. Let us say $y_i$ is the intersection point between $ \p_i^{9\delta}$ and vertical line of $\p_i^{9\delta}$ passing through $p_i$ for $i = l, m$. Then the distance between $y_m$ and $ \p_l^{9\delta}$ is less than $\gamma + 18\delta < 19\delta$ by (\ref{4delta}). Since $\p_l^{9\delta} \cap \p_m^{9\delta} \cap B_4 = \emptyset$, on $ \p_m^{9\delta}$
$$|\frac{\d x_n}{\d x_i}| < \frac{\gamma+ 18\delta}{3-\gamma-18\delta} < \frac{19\delta}{3-19\delta} <7\delta \h\h\h\h \mbox{for any } \gamma < \delta << 1, \mbox{ and } i = 1, \dots n-1. $$
So $ \max_{ y \in \p_m^{9\delta} \cap B_4} {\rm{dist}}(y, \p_l^{9\delta} \cap B_4) < C \delta + \gamma $ where $C$ depends on the dimension $n$.

\noindent
The above is nothing but harnack inequality. Since distance function between $\p_l^{9\delta}$ and $\p_m^{9\delta}$ in $B_4$  is nonnegative harmonic, we can apply Harnack Inequality.
\begin{equation}~\label{harnack} \max_{ y \in \p_m^{9\delta} \cap B_1} {\rm{dist}}(\p_l^{9\delta}, y)  < C_1 \min_{ y \in \p_m^{9\delta} \cap B_1} {\rm{dist}}(\p_l^{9\delta}, y) < C{\rm{dist}}(y_l, y_m)= C( 19\delta + \gamma)
\end{equation}
where $C$ depends on the dimension $n$.

Since the Hausdorff distance between $\p_l^{9\delta}, \p_m^{9\delta} $ is less than $C(\delta + \gamma)$, we can choose small $\delta_0$ and $\gamma_0$ such that $C(\delta_0 + \gamma_0)$ is less than $\delta$ in Lemma \ref{bigducontrol1}. By Lemma \ref{bigducontrol1}, we can conclude.

Now suppose $ {\rm{dist}} (\d \Omega^l, \d \Omega^m) > \gamma_0$ in $ B_1$ for above $\gamma_0$.
If $ y \in S_1 = \{ x \in B_1 |\h x \in \d \Omega^l \cap \d \Omega^m \h \} $, then $B_{\gamma_0}(y)$ has only two subdomains. From (\ref{noempty3}), there exists $x_0 \in B_1$ such that
$$\M(|\g u |^2 )(x_0) \leq 1 \h\h\h \mbox{ and}\h\h\h \M (|f|^2)(x_0)\leq \delta^2.$$

\noindent
For any $ y \in S_1$, by weak 1-1 estimate in Theorem \ref{pp11},

\begin{align*} |\{ x \in B_{\frac{\gamma_0}{4}}(y)  :   \M (| \g u |^2)(x) > \lambda_1\}  | & \leq \frac{C}{\lambda_1} \displaystyle \int_{B_2(x_0)} | \g u |^2 dx\\
& \leq \frac{C}{\lambda_1} |B_2(x_0)| < \frac{1}{2} |B_{\frac{\gamma_0}{4}}(y)|
\end{align*}

\noindent
when $\lambda_1 > \frac{C2^{3n+1}}{\gamma_0^n}$. Similarly for this $\lambda_1$,

\begin{align*} |\{ x \in B_{\frac{\gamma_0}{4}}(y)  :   \M (| f |^2)(x) > \delta^2 \lambda_1\}  | & \leq \frac{C}{\delta^2 \lambda_1} \displaystyle \int_{B_2(x_0)} | f|^2 dx\\
& \leq \frac{C}{\delta^2 \lambda_1} |B_2(x_0)| < \frac{1}{2} |B_{\frac{\gamma_0}{4}}(y)|.
\end{align*}

\noindent
From above two inequalities, one can find a $x_y \in B_{\frac{\gamma_0}{4}}(y)$ such that
$$\M(|\g u |^2 )(x_y) \leq \lambda_1 \h\h\h \mbox{ and}\h\h\h \M (|f|^2)(x_y)\leq \delta^2 \lambda_1.$$

\noindent
By Lemma \ref{anyscaleball}, there is a constant $N_1$ so that for any $\vare > 0$

\begin{equation}~\label{bad1}|\{ x \in \Omega  : \M (| \g u |^2)(x) > \lambda_1 N_1^2\} \cap B_{ \frac{\gamma_0}{4}}(y) | < \vare| B_{\frac{\gamma_0}{4}}(y)|.\end{equation}

\noindent
If $ y \in S_2 = \{ x \in B_1 |\h \min_{i = l, m} \h {\rm{dist}}(x, \d \Omega^i) > \frac{\gamma_0}{4 \times 5} \h \}$, $ B_{\frac{\gamma_0}{20}}(y) \subset \Omega^i$ for $ i = 0, l , m $. Similarly as above, there is a $x_y \in B_{\frac{\gamma_0}{80}}(y)$ such that
$$\M(|\g u |^2 )(x_y) \leq \lambda_2 \h\h\h \mbox{ and}\h\h\h \M (|f|^2)(x_y)\leq \delta^2 \lambda_2.$$

\noindent
when $\lambda_2 >  \frac{C2^{5n+1}5^n}{\gamma_0^n}$. By Lemma \ref{anyscaleball}, there is a constant $N_1$ so that for any $\vare > 0$

\begin{equation}~\label{bad2}|\{ x \in \Omega  : \M (| \g u |^2)(x) > \lambda_2 N_1^2\} \cap B_{ \frac{\gamma_0}{80}}(y) | < \vare| B_{\frac{\gamma_0}{80}}(y)|.\end{equation}

\noindent
So $ U = \{ B_r(y) | \h r = \frac{\gamma_0}{4 \times 5}, y \in S_1 \h \} \cup  \{ B_r(y) | \h r = \frac{\gamma_0}{80 \times 5}, y \in S_2 \h \} $ covers $ B_1 $. Then by Vitali Covering Lemma, there exist disjoint balls $\{ B_{r_i}(y_i) \}_{i = 1}^{\infty} \subset U \subset B_2$ such that $B_1 \subset  \cup_{i}B_{5r_i}(y_i)$. Let $N_1$ to be $\max (\sqrt{\lambda_1} N_1,\sqrt{\lambda_2} N_1)$. Then by (\ref{bad1}) and (\ref{bad2}),

\begin{align*} |\{ x \in \Omega  : \h & \M (| \g u |^2)(x) > N_1^2\} \cap B_1 | \\
& < \sum_{i}|\{ x \in \Omega  : \M (| \g u |^2)(x) > N_1^2\} \cap B_{5r_i}(y_i) |\\
& < \vare \sum_{i}  | B_{5r_i}(y_i)| < \vare 5^n\sum_{i}  | B_{r_i}(y_i)| \\
& < \vare 5^n |B_2| < \vare (10)^n |B_1|.
\end{align*}

Since $\Omega^i$'s for $i = 0, \dots, n$ are $(\delta, 9)$-flat, $B_{4}$ does not intersect more than three subdomains. To see that, assume that $B_{4}$ intersects with $\Omega^0, \Omega^1, \Omega^2, \Omega^3$. For any $p_i \in \d \Omega^i \cap B_{4}$, for $i = 1, 2, 3$, there exists a hyperplane $\p_i$ such that $\d \Omega^i \cap B_{9}$ is between $\p_i$ and $\p_i^{9\delta}$ where $\p_i^{9\delta}$ is translation of $\p_i$ into $\Omega^i$ in the normal direction by $9\delta$ since $\Omega^i$'s for $i = 0, \dots, n$ are $(\delta, 9)$-flat. Then for any $\delta < \frac{1}{18}$, on the plane $\mathcal{T}$ containing $p_1, p_2, p_3$, $H_i$ for $i = 1,2,3$ intersect with $B_{\frac{9}{2}}$ but they are disjoint in $B_{9}$, which is a contradiction to Lemma \ref{notmore3}.

\end{proof}

{\bf The proof of Theorem \ref{110main}} The proof follows from Lemma \ref{bigducontr} and scaling argument.

The following is an interior regularity theorem.

\begin{thm}~\label{maintheo}
 Let $p$ be a real number with $ 1 < p < \infty$. There is a small $ \delta = \delta(\lambda, p, n, R)$ so that for all $\Omega = \cup_{i=0}^{K} \Omega^i$ where $ \Omega^0 \defeq {\Omega\setminus\cup_{i=1}^K\Omega^i} $ and  $\Omega^i$'s for $i = 1, \dots, K$ and $\Omega$ are $(\delta, 9)$-flat and $A=\sum_{i=0}^{i=K}A^i\chi_{\Omega^i}$ where $A_i$'s are uniformly elliptic and $(\delta, 9)$-vanishing on $\Omega^i$ and for all $ f \in L^p(B_{4}; \bbr^n)$ , if $u$ is a weak solution of the elliptic PDE (\ref{div}) in $B_{4}$, then $u$ belong to $W^{1, p}(B_1)$ with the estimate
$$ \| \g u \|_{L^p(B_1)} \leq C(\|u \|_{L^p(B_{4})} + \|f \|_{L^p(B_{4})}),$$
where the constant $C$ is independent of $u$ and $f$.
\end{thm}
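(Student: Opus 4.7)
The plan is a Calder\'on--Zygmund / Caffarelli--Peral good-$\lambda$ iteration driven by the stopping--density statement in Theorem~\ref{110main}. First I would normalize: via the Caccioppoli--type estimate in Lemma~\ref{dufu} (plus standard interpolation on $B_2$), one bounds $\|\g u\|_{L^2(B_2)}$ in terms of $\|u\|_{L^p(B_4)}+\|f\|_{L^p(B_4)}$. Rescaling $(u,f)\mapsto (u/K,f/K)$ for a constant $K\simeq \|u\|_{L^p(B_4)}+\|f\|_{L^p(B_4)}$ and using the weak $(1,1)$ inequality from Theorem~\ref{pp11}(b), one arranges
\[
|\{x\in B_1:\M(|\g u|^2)(x)>N_1^2\}|<\vare |B_1|
\]
for a small $\vare>0$ to be chosen later and for the universal $N_1$ of Theorem~\ref{110main}.

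Next, define the dyadic level sets
\[
C_k=\{x\in B_1:\M(|\g u|^2)(x)>N_1^{2k}\},\qquad D_k=C_{k-1}\cup\{x\in B_1:\M(|f|^2)(x)>\delta^2 N_1^{2(k-1)}\}.
\]
Rescaling $u$ and $f$ by $N_1^{-(k-1)}$, Theorem~\ref{110main} (read as the contrapositive statement on each ball $B_r(x)\subset B_1$) together with the modified Vitali covering lemma (Lemma~\ref{insidevitali}) yield $|C_k|\leq 10^n\vare |D_k|$ for every $k\geq 1$. A straightforward induction then gives
\[
|C_k|\leq (10^n\vare)^k|B_1|+\sum_{i=1}^{k}(10^n\vare)^i\,|\{x\in B_1:\M(|f|^2)(x)>\delta^2 N_1^{2(k-i)}\}|.
\]

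To convert this into an $L^p$ estimate, I multiply by $N_1^{pk}$ and sum over $k\geq 1$. Choosing $\vare$ so small that $10^n\vare N_1^p<\tfrac{1}{2}$ makes the first geometric series summable. Interchanging the order of summation in the second piece and invoking Lemma~\ref{lplevel} (the level-set characterization of $L^p$) produces a bound by $C\|\M(|f|^2)^{1/2}\|_{L^p(B_1)}^p$. A second application of Lemma~\ref{lplevel} converts the summed left-hand side into $\|\g u\|_{L^p(B_1)}^p$. For $p\geq 2$ the $L^{p/2}$--boundedness of $\M$ (Theorem~\ref{pp11}(a)) bounds $\|\M(|f|^2)^{1/2}\|_{L^p}$ by $\|f\|_{L^p}$, and undoing the rescaling yields the claimed inequality.

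The main obstacle is the range $1<p<2$, where $\M$ is not bounded on $L^{p/2}$ and the final step above fails as stated. This is circumvented by running the same iteration scheme with $|f|^2$ replaced by $|f|^q$ throughout Theorem~\ref{110main} and Corollary~\ref{duclosedv} for a suitable $q\in(1,p)$ (so that the $L^{p/q}$--boundedness of $\M$ is available), or alternatively by a duality argument against the adjoint equation $-\mathrm{div}(A^T\g v)=\mathrm{div}\,g$, whose transposed coefficient matrix still satisfies the small BMO and Reifenberg--flatness assumptions.
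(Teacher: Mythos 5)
Your proposal takes a genuinely different route from the paper's. The paper disposes of Theorem~\ref{maintheo} in one line: apply the global $W^{1,p}$ theory of the next section to $\phi u$ for a suitable cutoff $\phi$, so that the covering and iteration machinery is run once, in the global setting where $\phi u\in H^1_0$. You instead run the Caffarelli--Peral good-$\lambda$ iteration locally: Theorem~\ref{110main} supplies the density dichotomy, Lemma~\ref{insidevitali} yields $|C_k|\leq 10^n\vare|D_k|$, and Lemma~\ref{lplevel} converts the level-set sums into $L^p$ norms. For $p\geq 2$ this is correct and essentially parallels the paper's derivation of Corollary~\ref{induction} and the final global theorem; what the direct route buys is logical independence from Section~4.2 (no forward reference), and what the paper's route buys is brevity and a single iteration engine for both the interior and global statements.

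The gap is the range $1<p<2$, and neither of your proposed repairs works as written. Replacing $|f|^2$ by $|f|^q$ with $q<2$ throughout Corollary~\ref{duclosedv} does not produce the needed bound on $\int|\nabla(u-v)|^2$, because the Caccioppoli estimate of Lemma~\ref{dufu} generates $\int|f|^2$ on the right and this is not controlled by $\int|f|^q$ when $q<2$; making that scheme work requires a reverse-H\"older or self-improving step which the paper has not set up. The duality alternative also fails as stated in the interior setting: since $u\notin H^1_0(B_4)$, $u$ is not an admissible test function against the adjoint solution $v$, so the identity $\int_{B_1}\nabla u\cdot g=\int f\cdot\nabla v$ does not hold. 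Forcing it with a cutoff reintroduces a term of order $\|\nabla u\|_{L^p}$ on an intermediate annulus that must then be absorbed by a further iteration over shrinking balls. The paper's reduction avoids both obstacles because $\phi u\in H^1_0$ makes the global duality argument apply directly. Finally, note that your opening normalization, bounding $\|\nabla u\|_{L^2(B_2)}$ by $\|u\|_{L^p(B_4)}+\|f\|_{L^p(B_4)}$ via Lemma~\ref{dufu} and ``interpolation,'' is immediate only when $p\geq 2$; for $p<2$ it requires its own bootstrap that you do not supply.
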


\begin{proof}
The proof follows from the global regularity theory in the next section with u replaced by $\phi u$ for an appropriately chosen cutoff function $\phi$.
\end{proof}

\begin{remark}We can change the ball $B_4$ in Theorem \ref{maintheo} to any ball $B_R$ for $R > 1.$
\end{remark}


\subsection{Global Estimates}

\begin{definition} We say that $ u \in H^1_0(\Omega) $ is a weak solution of (\ref{div}) if
\begin{equation} -\displaystyle \int_{\Omega} A \g u \g \varphi dx = \displaystyle \int_{\Omega} f \g \varphi dx \h\h\h\h \forall \varphi \in H^1_0(\Omega).
\end{equation}
\end{definition}

In this section our interest is the following case.
$$\Omega_R \supset T_R \hspace{1cm} \mbox{ with } D(\Omega_R, T_R) \h\h \mbox{small,}$$
where $D$ denotes the Hausdorff distance. We consider weak solution of

\begin{equation}~\label{divglou}
\begin{cases} -\mbox{div}( A (x)\g u(x)) =  \mbox{div} f  \h\h \mbox{in}\h \Omega_R  \\ 
\h\h\h\h\h\h u = 0 \h\h\h\h\h\h\h\h\mbox{on} \h \d_w \Omega_R
\end{cases}\end{equation}

Here the $n \times n$ coefficient matrix $A$ is $A=\sum_{i=0}^{i=K}A^i\chi_{\Omega^i}$ where $\Omega^0 \defeq {\Omega\setminus\cup_{i=1}^{i=k}\Omega^i} $ and $A^i$'s for $i=0,\cdots,K$ are in the John-Nirenberg space BMO \cite{JN61} of the functions of bounded mean oscillation with small BMO seminorms and $\Omega$ and $\Omega^i$'s are Reifenberg flat domains for $i = 1 \dots K$.




\begin{definition}  $u \in H^1(\Omega_R)$ is a weak solution of (\ref{divglou}) in $\Omega_R$ if
$$\displaystyle \int_{\Omega_R} A \g u \g \varphi dx =  - \displaystyle \int_{\Omega_R} f  \g \varphi dx \h\h\h\h \mbox{for any } \h\h \varphi \in H_0^1(\Omega_R)$$
and $u$'s $0$-extension is in $H^1(B_R).$


\end{definition}
In (\cite{BW03}), the following Lemmas were proven for $A$ without discontinuity.

\begin{lem}\label{glolem}\cite{BW03} There is a constant $ N_1 >0 $ so that for any $\vare> 0$, there exists a small $ \delta= \delta(\vare) > 0$ with $A$ uniformly elliptic and $(\delta, 4)$-vanishing, and if $ u  \in H^1_0(\Omega)$ is a weak solution of (\ref{divglou}) with $B_{4}^+ \subset  \Omega_4 \subset B_{4} \cap \{ x_n > -\delta \} $ and
\begin{equation}~\label{glnoemptyr}  \{ x \in \Omega_1 : \M(|\g u |^2 ) \leq 1 \} \cap \{x \in \Omega_1: \M (|f|^2)\leq \delta^2 \} \neq \emptyset, \end{equation}
then
\begin{equation}~\label{glmeasuresmallr}| \{ x \in \Omega : \M (| \g u |^2)(x) > N_1^2\} \cap B_1| < \vare|B_1|.\end{equation}
\end{lem}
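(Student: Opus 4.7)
The plan is to mimic the interior argument (Lemma \ref{bigducontrol1}) but adapted to the boundary situation, using the Reifenberg flatness of $\Omega$ near the flat part and the zero Dirichlet data on $\d_w \Omega_4$. First, from the non-emptiness assumption (\ref{glnoemptyr}) I would pick $x_0 \in \Omega_1$ with $\M(|\g u|^2)(x_0) \le 1$ and $\M(|f|^2)(x_0) \le \delta^2$. Since $B_2 \subset B_3(x_0)$, this yields the normalized averaged bounds
\begin{equation*}
\frac{1}{|B_2|}\int_{\Omega_2}|\g u|^2\,dx \le (3/2)^n,\qquad \frac{1}{|B_2|}\int_{\Omega_2}|f|^2\,dx \le (3/2)^n\delta^2,
\end{equation*}
which, together with the $(\delta,4)$-vanishing hypothesis on $A$, give us the smallness input needed for an approximation step.

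Second, I would prove a boundary analog of Lemma \ref{uclosev} and Corollary \ref{duclosedv}: there exists a weak solution $v \in H^1(B_2^+)$ of $-\operatorname{div}(\bar A_{B_2^+}\g v)=0$ in $B_2^+$ with $v=0$ on $T_2$, extended by zero to $B_2 \cap \{x_n<0\}$, such that $\int_{\Omega_{4/3}}|\g(u-v)|^2\,dx \le \eta^2$ for any prescribed $\eta$. The argument is by contradiction/compactness in $H^1$, using that $u\equiv 0$ on $\d_w\Omega$, that $\d\Omega \cap B_4 \subset \{-\delta<x_n<0\}$, and that $A \to \bar A_{B_2^+}$ in $L^2$ when the BMO seminorm is small. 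The small-strip region $B_2\cap\{-\delta<x_n<0\}\cap\Omega$ has measure $O(\delta)$, so the $L^2$-gap between $u$ on $\Omega$ and $v$ on $B_2^+$ vanishes as $\delta\to 0$.

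Third, for such a $v$ on the flat half-ball with zero Dirichlet data on $T_2$, an odd reflection across $T_2$ (or the standard boundary $C^{1,\alpha}$ theory for constant-coefficient equations on half-balls with flat boundary) yields a uniform bound
\begin{equation*}
\|\g v\|_{L^\infty(B_{3/2}^+)} \le N_0\,\|v\|_{L^2(B_2^+)} \le N_0'.
\end{equation*}
This is the boundary counterpart of Lemma \ref{dvbound}; here the matrix is constant (not piecewise constant) because we are only approximating one subdomain near its flat boundary, so it is actually simpler.

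Finally, with $N_1^2 \defeq \max\{5^n, 4N_0^2\}$ I would show the inclusion
\begin{equation*}
\{x \in \Omega_1 : \M(|\g u|^2)(x) > N_1^2\} \subset \{x \in \Omega_1 : \M_{\Omega_2}(|\g(u-v)|^2)(x) > N_0^2\},
\end{equation*}
split into $r \le 1/2$ (use the $L^\infty$ bound on $\g v$ plus the triangle inequality) and $r>1/2$ (use the $L^2$ bound at $x_0$, since $B_r(x_1)\subset B_{5r}(x_0)$). The weak $1$-$1$ estimate then gives
\begin{equation*}
|\{x \in \Omega_1 : \M(|\g u|^2) > N_1^2\}| \le \tfrac{C}{N_0^2}\int_{\Omega_{4/3}}|\g(u-v)|^2\,dx \le \tfrac{C}{N_0^2}\eta^2,
\end{equation*}
and choosing $\eta$ small gives the $\varepsilon|B_1|$ bound; this dictates the choice of $\delta=\delta(\varepsilon)$ via the approximation step.

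The main obstacle will be the boundary approximation in the second step. Unlike the interior case, where Lemma \ref{uclosev} compares $u$ with a solution on the same ball, here the domains of $u$ and $v$ differ: $u$ lives on $\Omega_4$, which is only Hausdorff-close to $B_4^+$. Handling this requires careful use of the zero boundary condition on $\d_w \Omega_4$ (so that the zero extension of $u$ is in $H^1(B_4)$) together with Poincaré-type estimates on the thin strip $\Omega \cap \{|x_n|<\delta\}$ to pass to the limit in the compactness argument. Once this approximation is in hand, the rest proceeds by the same maximal-function/weak-type-$(1,1)$ scheme used in the interior theorem.
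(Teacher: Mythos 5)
The paper does not reprove this lemma; it is cited directly from \cite{BW03}, so there is no in-paper proof to compare against. Your sketch, however, correctly reproduces the Byun--Wang boundary argument — averaged bounds from the non-empty set, a compactness/approximation step comparing the zero-extended $u$ on the wiggled domain to a constant-coefficient solution $v$ on $B_2^+$ vanishing on $T_2$, the boundary $W^{1,\infty}$ estimate for $v$, and the maximal-function inclusion plus weak $(1,1)$ bound — and it is also exactly the boundary counterpart of the paper's own interior Lemma \ref{bigducontrol1}, so the approach is sound and matches the cited source.
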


\begin{cor}\label{glocor}\cite{BW03} There is a constant $ N_1 >0 $ so that for any $\vare, r > 0$, there exists a small $ \delta= \delta(\vare) > 0$ with $A$ uniformly elliptic and $(\delta, 4r)$-vanishing, and if $ u  \in H^1_0(\Omega)$ is a weak solution of (\ref{divglou}) with $B_{4r}^+ \subset  \Omega_{4r} \subset B_{4r} \cap \{ x_n > -\delta r \} $ and
\begin{equation} \{ x \in \Omega_r : \M(|\g u |^2 ) \leq 1 \} \cap \{x \in \Omega_r: \M (|f|^2)\leq \delta^2 \} \neq \emptyset, \end{equation}
then
\begin{equation}| \{ x \in \Omega : \M (| \g u |^2)(x) > N_1^2\} \cap B_r| < \vare|B_r|.\end{equation}
\end{cor}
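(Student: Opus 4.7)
The plan is to reduce Corollary \ref{glocor} to Lemma \ref{glolem} by a dilation argument that rescales the unit-ball picture up to scale $r$. First I would introduce rescaled quantities
$$\tilde u(y) \defeq \frac{u(ry)}{r},\qquad \tilde f(y) \defeq f(ry),\qquad \tilde A(y) \defeq A(ry),\qquad \tilde \Omega \defeq \tfrac{1}{r}\Omega,$$
and check that, because $\g \tilde u(y) = \g u(ry)$ and divergence picks up one factor of $r$ on each side, $\tilde u \in H^1_0(\tilde\Omega)$ is a weak solution of $-\mathrm{div}(\tilde A \g \tilde u) = \mathrm{div}\,\tilde f$ in $\tilde\Omega_4$.

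Next I would verify each hypothesis of Lemma \ref{glolem} for the tilde-data. The geometric assumption translates directly: $B_{4r}^+ \subset \Omega_{4r} \subset B_{4r}\cap\{x_n > -\delta r\}$ becomes $B_4^+ \subset \tilde\Omega_4 \subset B_4\cap\{y_n > -\delta\}$ after dividing by $r$. The $(\delta,4r)$-vanishing hypothesis for $A$ gives, by a change of variables $x = ry$ in Definition \ref{deltaRvanishing}, the $(\delta, 4)$-vanishing of $\tilde A$, since the BMO averages are invariant under dilation. Uniform ellipticity is pointwise and hence preserved.

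For the nonempty-intersection hypothesis I would use the scale invariance of the Hardy--Littlewood maximal function:
$$\M(|\g \tilde u|^2)(y) = \M(|\g u|^2)(ry),\qquad \M(|\tilde f|^2)(y) = \M(|f|^2)(ry).$$
Thus $\{x \in \Omega_r : \M(|\g u|^2)(x) \leq 1\}\cap\{x \in \Omega_r : \M(|f|^2)(x) \leq \delta^2\}\ne\emptyset$ is equivalent to the corresponding statement for $\tilde u, \tilde f$ over $\tilde\Omega_1$. Lemma \ref{glolem} then produces the universal $N_1$ and, for the prescribed $\vare$, a $\delta = \delta(\vare)$ (independent of $r$) such that
$$\bigl|\{y \in \tilde\Omega : \M(|\g \tilde u|^2)(y) > N_1^2\}\cap B_1\bigr| < \vare|B_1|.$$
Undoing the dilation via $y \mapsto x/r$ multiplies both sides by $r^n$ and converts the set on the left into $\{x\in\Omega:\M(|\g u|^2)(x)>N_1^2\}\cap B_r$, yielding the desired bound.

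The argument is essentially bookkeeping, and the only step one needs to be careful with is the simultaneous scaling of $u$ (by $1/r$) and of $f$ and $A$ (left alone): this particular choice is forced by demanding that the equation remain in divergence form and that $|\g \tilde u|^2$ and $|\tilde f|^2$ appear on the same footing as in the unit-scale hypothesis, so that the bound $1$ and $\delta^2$ do not pick up a factor of $r$. Once this normalization is fixed, everything else is transported cleanly by the scale invariance of $\M$ and of the BMO seminorm.
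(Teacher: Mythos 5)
Your proof is correct and is exactly the scaling argument the paper has in mind: the paper cites this corollary from \cite{BW03} without spelling out details, but for the two analogous corollaries (Corollary \ref{anyscaleball} and Corollary \ref{myglocor}) it explicitly states that the proof follows from the corresponding unit-scale lemma plus a scaling argument. Your choice of normalization ($\tilde u = u(r\cdot)/r$, $\tilde f = f(r\cdot)$, $\tilde A = A(r\cdot)$) and your verification that the equation, the Reifenberg flatness, the $(\delta,4r)$-vanishing condition, the nonempty-intersection hypothesis, and the maximal function all transport cleanly under dilation are precisely the bookkeeping steps being invoked.
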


Now we consider how to control the measure of the set where $|\g u|$ is big for the case that $A$ has big discontinuity along the subdomains.

\begin{lem}~\label{myglolem}There is a constant $ N_1 >0 $ so that for any $\vare> 0$, there exists a small $ \delta= \delta(\vare) > 0$ with $A_i$'s are uniformly elliptic and $(\delta, 9)$-vanishing on $\Omega^i$ for $i = 0 \dots K$ and $\Omega$ and $\Omega^i$'s are $(\delta,9)$-flat for $i = 1\dots K$, and if $ u  \in H^1_0(\Omega)$ is a weak solution of (\ref{divglou}) with $B_{4}^+ \subset  \Omega_{4} \subset B_{4} \cap \{ x_n > -4\delta \} $ and
\begin{equation}~\label{mynoemptyr}  \{ x \in \Omega_1 : \M(|\g u |^2 ) \leq 1 \} \cap \{x \in \Omega_1: \M (|f|^2)\leq \delta^2 \} \neq \emptyset, \end{equation}
then
\begin{equation}~\label{mymeasuresmallr}| \{ x \in \Omega : \M (| \g u |^2)(x) > N_1^2\} \cap B_1| < \vare|B_1|.\end{equation}
\end{lem}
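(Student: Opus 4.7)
My plan is to mirror the proof of Lemma \ref{bigducontr}, treating the outer Reifenberg boundary $\d\Omega$ as an additional flat interface. Because both $\Omega$ and the $\Omega^i$ are $(\delta,9)$-flat, Lemma \ref{notmore3} implies that $B_4$ meets at most two subdomains in addition to $\d\Omega$. This splits the analysis into three cases: $(i)$ $\Omega_4$ sits inside a single subdomain $\Omega^l$, in which case (\ref{mymeasuresmallr}) is exactly Corollary \ref{glocor} with $r=1$; $(ii)$ $\Omega_4$ meets two subdomains $\Omega^l,\Omega^0$ together with $\d\Omega$; and $(iii)$ $\Omega_4$ meets three subdomains $\Omega^l,\Omega^0,\Omega^m$ together with $\d\Omega$.

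For case $(ii)$, I would establish a boundary analogue of Lemma \ref{uclosev} and Corollary \ref{duclosedv}. After straightening $\d\Omega$ using its $(\delta,9)$-flatness, approximate $u$ in $L^2$ by a weak solution $v$ of $-\mbox{div}(\tilde A^b \g v) = 0$ on $B_2 \cap \{x_n > -c\delta\}$ with zero Dirichlet data on $\{x_n = -c\delta\}$, where $\tilde A^b = \overline{A^l}_{\Omega^l_2}\chi_{\{x_n > a\}} + \overline{A^0}_{\Omega^0_2}\chi_{\{x_n < a\}}$. A Lipschitz estimate $\|\g v\|_{L^\infty(B_{1/2}^+)} \leq C\|v\|_{L^2}$ is the main new ingredient: tangential differencing across $\{x_n = a\}$ works exactly as in Lemma \ref{dvbound}, and odd reflection across the flat portion of $\d\Omega$ handles the Dirichlet condition while preserving the flatness of the internal interface (which is parallel to the reflection plane). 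Once this estimate is in hand, the comparison argument of Corollary \ref{duclosedv} followed by the threshold/weak-$(1,1)$ argument of Lemma \ref{bigducontrol1} yields (\ref{mymeasuresmallr}) in case $(ii)$.

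For case $(iii)$, I would replicate the two-subcase split of Lemma \ref{bigducontr}. If some pair among $\d\Omega^l, \d\Omega^m, \d\Omega$ lies within Hausdorff distance $\gamma_0$ in $B_1$, a Harnack-type estimate applied to the distance function between the associated hyperplanes pushes them within $C(\delta+\gamma_0)$ throughout $B_4$, which lets one absorb one interface into its neighbor and fall back on case $(ii)$ or case $(i)$ after possibly shrinking $\delta$. Otherwise all three interfaces are pairwise $\gamma_0$-separated in $B_1$, so $B_1$ can be covered by Vitali balls of radius of order $\gamma_0$, each of which meets at most two of the interfaces; on each such ball, the hypothesis (\ref{mynoemptyr}) together with the weak-$(1,1)$ estimate of Theorem \ref{pp11} produces a control point at which $\M(|\g u|^2)$ and $\M(|f|^2)$ lie below a $\gamma_0$-dependent threshold, whence the appropriate rescaling of case $(ii)$ or of Corollary \ref{anyscaleball} applies on that ball. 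Summing over the disjoint Vitali balls and adjusting $N_1$ by the resulting $\gamma_0$-factor delivers (\ref{mymeasuresmallr}).

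The main obstacle is case $(ii)$: constructing the two-phase model solution $v$ carrying zero Dirichlet data on a flat portion of the boundary together with an internal interface, and proving $\g v$ is Lipschitz near the corner where the internal interface meets the boundary plane. The interior half of the argument is supplied by Lemma \ref{dvbound}, but one must additionally handle tangential differencing parallel to $\d\Omega$ and use the Dirichlet data (via odd reflection or a Hardy-type inequality) to control the normal derivative near $\{x_n = -c\delta\}$; compatibility of the flat interface with the reflection forces the interface to be horizontal, which is exactly what the initial choice of coordinates arranges.
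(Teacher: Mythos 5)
Your high-level plan — treat $\partial\Omega$ as an additional flat interface and mirror Lemma \ref{bigducontr} — is the right instinct, but the case decomposition is off in a way that creates a real gap. The paper observes that, since $\Omega^c$ is also $(\delta,9)$-flat, the Lemma \ref{notmore3} argument now applies to the family $\{\Omega^1,\dots,\Omega^K,\Omega^c\}$. When $B_4$ meets $\partial\Omega$, the half-space for $\Omega^c$ already uses up one of the two allowed slots, so at most \emph{one} of the $\Omega^l$ ($l\geq 1$) can meet $B_4$. Your case $(iii)$ (three subdomains $\Omega^l,\Omega^0,\Omega^m$ plus $\partial\Omega$) is therefore impossible — it even contradicts your own count of ``at most two subdomains in addition to $\partial\Omega$''. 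The real work is all in your case $(ii)$, and that is where the gap sits.

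In case $(ii)$ you propose to build a two-phase boundary comparison solution $v$ on $B_2\cap\{x_n>-c\delta\}$ with an internal interface $\{x_n=a\}$, using odd reflection across $\{x_n=-c\delta\}$ to reduce to Lemma \ref{dvbound}. This silently assumes $\partial\Omega^l$ is parallel to $\partial\Omega$: you set up coordinates so $\partial\Omega$ is horizontal, and then take the interface also to be the horizontal set $\{x_n=a\}$. Nothing in the hypotheses forces the two flat pieces to be parallel, and if they meet $B_4$ at an angle, then $|A-\tilde A^b|$ is \emph{not} small in $L^2$ (the jump $A^l-A^0$ can be large), so the compactness argument modeled on Lemma \ref{uclosev} fails. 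The dichotomy that fixes this — if $\partial\Omega^l$ and $\partial\Omega$ are within $\gamma_0$ use Harnack to force them into one $C(\delta+\gamma_0)$-band and apply Lemma \ref{glolem} directly (with no internal interface at all); if they are $\gamma_0$-separated, cover $B_1$ by Vitali balls each seeing only one of $\partial\Omega^l$, $\partial\Omega$, and apply Corollary \ref{anyscaleball} or Corollary \ref{glocor} ball by ball — is exactly what the paper does, and it is what you placed in the impossible case $(iii)$. Moved to case $(ii)$, that dichotomy makes your new two-phase Dirichlet comparison lemma unnecessary. (As a secondary point, even in the parallel configuration your odd reflection across $\{x_n=-c\delta\}$ would generate a second interface at $\{x_n=-2c\delta-a\}$, so Lemma \ref{dvbound} would need a genuine two-interface extension, not a direct citation.)
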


\begin{proof}
If $B_4$ intersects with only $\Omega^0$, then this lemma is nothing but what Lemma \ref{glolem} says. Note that $B_{4}$ cannot intersect with more than two subdomains by the same argument in the proof of Lemma \ref{bigducontr}. (considering $\Omega^c$ as $(\delta, 9)$-flat for any sufficiently small $\delta$). Assume that $B_4$ intersects with $\Omega^0$ and $\Omega^l$ for any $l = 1 \dots K$.

First suppose dist$(\d \Omega^l, \d \Omega) < \gamma$ in $B_4$ for some $\gamma>0$. Then there exist $p_l \in \d\Omega^l \cap B_4 $ and $p \in \d\Omega \cap B_4 $ such that ${\rm{dist}}(p, p_l) < \gamma$. Since $\Omega^l$ are $(\delta, 9)$-flat, $\p_l^{9\delta}(p_l) \cap B_4\subset \Omega^l$ where $\p_l^{\delta}(p_l)$ is the $(n-1)$ dimensional plane which is translated hyperplane at $p_l$ by $\delta$ along the normal direction toward $\Omega^l$. Let us say $y_l$ is the intersection point between $ \p_l^{9\delta}$ and vertical line of $\p_l^{9\delta}$ passing through $p_l$. Then the dist$(y_l, \{x \in B_4: x_n = -4\delta\})< 9\delta + \gamma + 4\delta = 13\delta + \gamma$. Note that $\p_l^{9\delta} \cap B_4 \subset \Omega^l$. Since distance function between $\p_l^{9\delta} \cap B_4$ and $\{x \in B_4: x_n = -4\delta\}$  is nonnegative harmonic, we can apply Harnack Inequality.
\begin{align*}~\label{harnack2} \max_{ y \in \p_l^{9\delta} \cap B_4} {\rm{dist}} &(y, \{x \in B_4: x_n = -4\delta\})  \\
& \leq C \min_{ y \in \p_l^{9\delta} \cap B_4}  {\rm{dist}}(y, \{x \in B_4 :x_n = -4\delta\})\\& \leq C {\rm{dist}}(y_l, \{x \in B_4: x_n = -4\delta\})\\&= C( 13\delta + \gamma)
\end{align*}
where $C$ depends on the dimension $n$. One can choose small $\gamma_0$ and $\delta_0$ so that $C( 13\delta_0 + \gamma_0) <\delta$ for $\delta$ in Lemma \ref{glolem}. We conclude by Lemma \ref{glolem}.

Now suppose dist$(\d \Omega^l, \d \Omega) \geq \gamma_0$ in $B_4$ for the $\gamma_0$ above. For any $ y \in S_1 = \{ x \in B_1 |\h x \in \d \Omega^l \h \} $, $B_{\gamma_0}(y)$ has two subdomains and $B_{\gamma_0}(y) \cap \d \Omega = \emptyset$. From (\ref{mynoemptyr}), there exists $x_0 \in \Omega_1$ such that
$$\M(|\g u |^2 )(x_0) \leq 1 \h\h\h \mbox{ and}\h\h\h \M (|f|^2)(x_0)\leq \delta^2.$$







\noindent
As we showed in the proof of Lemma \ref{bigducontr}, there is a constant $N_1$ so that for any $\vare > 0$, there exists $\delta > 0$ so that

\begin{equation}~\label{gl1} |\{ x \in \Omega  : \M (| \g u |^2)(x) > \lambda_1 N_1^2\} \cap B_{ \frac{\gamma_0}{4}}(y) | < \vare| B_{\frac{\gamma_0}{4}}(y)|.\end{equation}
where $\lambda_1 > \frac{C2^{3n+1}}{\gamma_0^n}$. Also for any $ y \in S_2 = \{ x \in B_1 |\h x \in \d \Omega \h \} $, $B_{\gamma_0}^+ \subset \Omega^0 \subset B_{\gamma_0} \cap \{ x_n > -\gamma_0\delta\}$ in appropriate coordinate system. By applying Corollary \ref{glocor}, there is a constant $N_1$ so that for any $\vare > 0$, there exists $\delta > 0$ so that

\begin{equation}~\label{gl2}|\{ x \in \Omega  : \M (| \g u |^2)(x) > \lambda_1 N_1^2\} \cap B_{ \frac{\gamma_0}{4}}(y) | < \vare| B_{\frac{\gamma_0}{4}}(y)|.\end{equation}

\noindent
For any $ y \in T = \{ x \in B_1 |\h \min ({\rm{dist}}(x, \d \Omega^l), {\rm{dist}}(x, \d \Omega))> \frac{\gamma_0}{4 \times 5} \h \}$, $ B_{\frac{\gamma_0}{20}}(y) \subset \Omega^i$ for $ i = 0, l $. Then by Lemma \ref{bigducontrol1} there is a constant $N_1$ so that for any $\vare > 0$, there exists $\delta > 0$ so that

\begin{equation}~\label{gl3}|\{ x \in \Omega  : \M (| \g u |^2)(x) > \lambda_2 N_1^2\} \cap B_{ \frac{\gamma_0}{20 \times 4}}(y) | < \vare| B_{\frac{\gamma_0}{80}}(y)|\end{equation}
where $\lambda_2 >  \frac{C2^{5n+1}5^n}{\gamma_0^n}$.

\noindent
Since $B \subset  U \defeq \{ B_r(y) | \h r < \frac{\gamma_0}{4 \times 5}, y \in S_1 \cup S_2 \h \} \cup  \{ B_r(y) | \h r < \frac{\gamma_0}{80 \times 5}, y \in T \h \} $, by Vitali Covering Lemma, there are disjoint set $\{ B_{r_i}(y_i) \}_{i = 1}^{\infty} \subset U \subset B_2$ s.t. $B_1 \subset  \cup_{i}B_{5r_i}(y_i)$
\begin{align*} |\{ x \in \Omega  :  & \M (| \g u |^2)(x) > N_1^2\} \cap B_1 | \\
& < \sum_{i}|\{ x \in \Omega  : \M (| \g u |^2)(x) > N_1^2\} \cap B_{5r_i}(y_i) |\\
& < \vare \sum_{i}  | B_{5r_i}(y_i)| < \vare 5^n\sum_{i}  | B_{r_i}(y_i)| \\
& < \vare 5^n |B_2| < \vare (10)^n |B_1|.
\end{align*}

\noindent
Here we used (\ref{gl1}), (\ref{gl2}) and (\ref{gl3}).
\end{proof}

\begin{cor}~\label{myglocor}There is a constant $ N_1 >0 $ so that for any $\vare> 0$, there exists a small $ \delta= \delta(\vare) > 0$ with $A_i$'s are uniformly elliptic and $(\delta, 9)$-vanishing on $\Omega^i$ for $i = 0 \dots K$ and $\Omega$ and $\Omega^i$'s are $(\delta,9)$-flat for $i = 1\dots K$, and if $ u  \in H^1_0(\Omega)$ is a weak solution of (\ref{divglou}) with $B_{4r}^+ \subset  \Omega_{4r} \subset B_{4r} \cap \{ x_n > -4\delta r \} $ and
\begin{equation}~\label{mynoemptyr1}  \{ x \in \Omega_r : \M(|\g u |^2 ) \leq 1 \} \cap \{x \in \Omega_r: \M (|f|^2)\leq \delta^2 \} \neq \emptyset, \end{equation}
then
\begin{equation}~\label{mymeasuresmallr1}| \{ x \in \Omega : \M (| \g u |^2)(x) > N_1^2\} \cap B_r| < \vare|B_r|.\end{equation}
\end{cor}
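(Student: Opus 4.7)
The plan is to reduce the statement to Lemma \ref{myglolem} by a standard dilation. For $r \in (0,1]$ fixed, define the rescaled objects
\[
\tilde u(x) \defeq \frac{u(rx)}{r}, \quad \tilde f(x) \defeq f(rx), \quad \tilde A(x) \defeq A(rx), \quad \tilde \Omega \defeq r^{-1}\Omega, \quad \tilde \Omega^i \defeq r^{-1}\Omega^i.
\]
A direct change of variables in the weak formulation shows that $\tilde u \in H^1_0(\tilde \Omega)$ is a weak solution of $-\mathrm{div}(\tilde A \nabla \tilde u) = \mathrm{div}\, \tilde f$ on $\tilde \Omega$, so the PDE hypothesis of Lemma \ref{myglolem} is met on $\tilde \Omega$.

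Next I would check the three geometric/analytic hypotheses scale correctly. The flat-boundary inclusion $B_{4r}^+ \subset \Omega_{4r} \subset B_{4r}\cap\{x_n > -4\delta r\}$ becomes, after dividing by $r$, exactly $B_4^+ \subset \tilde\Omega_4 \subset B_4 \cap \{x_n > -4\delta\}$. Since the BMO seminorm is scale invariant and the supremum in Definition \ref{deltaRvanishing} is taken over radii $\rho \leq R$, if $A^i$ is $(\delta, 9)$-vanishing on $\Omega^i$ then $\tilde A^i$ is $(\delta, 9/r)$-vanishing on $\tilde\Omega^i$, hence in particular $(\delta, 9)$-vanishing; similarly, Reifenberg flatness of $\Omega$ and $\Omega^i$ at scale $9$ transfers to Reifenberg flatness of $\tilde \Omega$ and $\tilde \Omega^i$ at scale $9/r \geq 9$. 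Thus $\tilde \Omega, \tilde\Omega^i, \tilde A^i$ all satisfy the hypotheses of Lemma \ref{myglolem}.

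Now I would use the fact that the Hardy--Littlewood maximal function commutes with dilation: computing
\[
\M(|\nabla \tilde u|^2)(x) = \sup_{\rho>0} \frac{1}{|B_\rho|}\int_{B_\rho(x)} |\nabla u(ry)|^2\, dy = \M(|\nabla u|^2)(rx),
\]
and analogously $\M(|\tilde f|^2)(x) = \M(|f|^2)(rx)$. Therefore the hypothesis (\ref{mynoemptyr1}), which asserts the existence of $y_0 \in \Omega_r$ with $\M(|\nabla u|^2)(y_0)\leq 1$ and $\M(|f|^2)(y_0)\leq \delta^2$, is equivalent to the existence of $y_0/r \in \tilde\Omega_1$ satisfying $\M(|\nabla \tilde u|^2)(y_0/r)\leq 1$ and $\M(|\tilde f|^2)(y_0/r)\leq \delta^2$, which is precisely hypothesis (\ref{mynoemptyr}) for $\tilde u, \tilde f$.

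Applying Lemma \ref{myglolem} to $(\tilde u, \tilde f, \tilde A, \tilde \Omega)$ then yields
\[
\bigl|\{x \in \tilde \Omega : \M(|\nabla \tilde u|^2)(x) > N_1^2\} \cap B_1\bigr| < \vare |B_1|.
\]
Pulling back via $x \mapsto rx$, the set $\{x \in \tilde\Omega : \M(|\nabla \tilde u|^2)(x) > N_1^2\}\cap B_1$ is the image under $y \mapsto y/r$ of $\{y \in \Omega : \M(|\nabla u|^2)(y) > N_1^2\}\cap B_r$, and both sides carry a factor $r^{-n}$ which cancels, producing the claimed estimate (\ref{mymeasuresmallr1}). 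The only real content is the invariance bookkeeping above; no step is subtle, since the scale $r \leq 1$ makes every vanishing/flatness threshold improve rather than degrade.
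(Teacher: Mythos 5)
Your scaling argument is correct and is exactly the route the paper takes: the paper's proof of Corollary \ref{myglocor} simply cites Lemma \ref{myglolem} plus a scaling argument, and you have supplied the precise dilation bookkeeping (scale invariance of the BMO seminorm and Reifenberg flatness, commutation of the maximal function with dilation, and cancellation of the $r^{-n}$ Jacobian factors). Nothing more is needed.
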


\begin{proof} Then proof is given by Lemma \ref{myglolem} and scaling argument.
\end{proof}

The following lemma shows that same result of Lemma \ref{myglolem} holds for any ball intersecting with $\Omega$. 

\begin{lem}~\label{tousevitali1} There is a constant $ N_1 >0 $ so that for any $ \vare > 0 $ and $0<r<1$, there exists a small $ \delta= \delta(\vare) > 0$ for all $\Omega = \cup_{i=0}^{K} \Omega^i$ where $ \Omega^0 \defeq {\Omega\setminus\cup_{i=1}^K\Omega^i} $ and $\Omega$ and disjoint subdomains $\Omega^i$'s for $i = 1, \dots, K$ are $(\delta, 45)$-flat and for any  $A=\sum_{i=0}^{i=K}A^i\chi_{\Omega^i}$ where $A_i$'s are uniformly elliptic and $(\delta, 45)$-vanishing on $\Omega^i$, and if $u \in H_0^1(\Omega)$ is the weak solution of $ -{\rm{div}}(A \g u )= {\rm{div}} f $ in $ \Omega \supset B_{4r}$ and if the following property holds:
\begin{equation}~\label{tousevitali2} \{ x \in \Omega_r : \M(|\g u |^2 ) \leq 1 \} \cap \{x \in \Omega_r: \M (|f|^2)\leq \delta^2 \} \neq \emptyset,\end{equation}
then
\begin{equation}\label{tousevitali3}| \{ x \in \Omega : \M (| \g u |^2)(x) > N_1^2\} \cap B_r| < \vare|B_r|.\end{equation}
\end{lem}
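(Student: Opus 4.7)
The plan is to combine the two previously proven cases --- the interior estimate (Lemma \ref{bigducontr}) and the near-boundary estimate (Corollary \ref{myglocor}) --- into a single estimate valid for balls arbitrarily positioned with respect to $\partial \Omega$, using a rescaling to reduce to radius $1$ and a translation to align with the nearest boundary point when necessary.

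First I would reduce to $r = 1$ by rescaling: setting $\tilde u(x) := u(rx)/r$ and $\tilde f(x) := f(rx)$, and pulling back $\Omega$ and the subdomains $\Omega^i$ accordingly. The $(\delta, 45)$-Reifenberg flatness and $(\delta, 45)$-vanishing hypotheses become $(\delta, 45/r)$-versions, and since $r \leq 1$ we have $45/r \geq 45$, giving more than enough room for every subsequent invocation. The non-emptiness condition (\ref{tousevitali2}) and the target (\ref{tousevitali3}) transform into the corresponding statements at unit scale.

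Next I would split into two cases according to the position of the rescaled ball $B_4$ with respect to $\partial \Omega$. In the \emph{interior case}, where $\text{dist}(0, \partial \Omega)$ is bounded below by a fixed geometric constant so that $B_4$ sits well inside $\Omega$, Lemma \ref{bigducontr} applies verbatim (noting that by Lemma \ref{notmore3} at most three of the subdomains $\Omega^i$ can intersect $B_4$), and directly yields the conclusion at unit scale. In the \emph{near-boundary case}, I would pick $z \in \partial \Omega$ of bounded distance from the origin; by the $(\delta, 45)$-Reifenberg flatness of $\partial \Omega$ at $z$, one may introduce an orthonormal coordinate system centered at $z$ in which $\partial \Omega \cap B_{9}(z)$ lies within a thin slab of $\{x_n = 0\}$. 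After a short translation to an appropriate point $z'$ nearby, the inclusions
\[
B_4^+(z') \subset \Omega_{4}(z') \subset B_4(z') \cap \{x_n > -4\delta\}
\]
required by Corollary \ref{myglocor} are realized, and that corollary, applied at a suitable radius, delivers the measure estimate on a ball containing $B_1$; an adjustment by a universal combinatorial factor then gives (\ref{tousevitali3}). Taking $N_1$ to be the maximum and $\delta$ to be the minimum of the constants furnished in the two cases yields a single universal pair $(N_1, \delta)$ working in both regimes.

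The main obstacle is the bookkeeping in the near-boundary case: one must choose the translated center $z'$ so that simultaneously (i) the translated ball contains the original test ball $B_r$, (ii) the Reifenberg inclusion is realized with the correct relative $\delta$, and (iii) the non-emptiness hypothesis (\ref{tousevitali2}) transfers to the translated configuration. The enlargement of the required flatness radius from $9$ (as in the precursor Lemmas \ref{bigducontr} and \ref{myglolem}) to $45$ is precisely what supplies the extra room needed to absorb a factor of roughly $5$ from centering near $\partial \Omega$ together with the scaling factor required to compare $B_r$ to the slightly enlarged ball appearing in Corollary \ref{myglocor}.
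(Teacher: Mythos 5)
Your proposal is correct and follows essentially the same route as the paper: split according to whether $B_{4r}$ meets $\partial\Omega$, handle the interior case with Theorem \ref{110main} (the scaled form of Lemma \ref{bigducontr}), and in the boundary case pick $y\in\partial\Omega$ with $B_r\subset B_{5r}(y)$, apply Corollary \ref{myglocor} at radius $5r$ (which is why $(\delta,45)$-flatness, i.e.\ $9\times 5$, is needed) with $\vare$ replaced by $\vare/5^n$, and then use $|B_{5r}|=5^n|B_r|$. Your rescaling to $r=1$ is a cosmetic variant, and your ball $B_4^+(z')$ should really be $B_5^+$ so as to contain $B_1$, but you already flag the ``suitable radius'' and the factor-of-$5$ bookkeeping, so the argument is sound.
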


\begin{proof}
If $B_{4r} \cap \d \Omega = \emptyset$, then by an interior estimate \ref{110main} we can conclude. Assume that $B_{4r} \cap \d \Omega \neq \emptyset$. Note that $B_r \subset B_{5r}(y)$ for some $y \in \d \Omega$. By (\ref{tousevitali2}), there exists $x_0 \in B_r \subset B_{5r}(y)$ such that $\M(|\g u |^2 )(x_0) \leq 1$ and $\M (|f|^2)(x_0)\leq \delta^2$. Since $\Omega$ is ($\delta, 45$)-Reifenberg flat, we have, in appropriate coordinate system,
$$B_{20r}^+ \subset  \Omega_{20r} \subset B_{20r} \cap \{ x_n > -20\delta r \}. $$
Here we use the corollary \ref{myglocor} to the ball $B_{5r}(y)$ with $\vare$ replaced by $\frac{\vare}{5^n}$. Then
\begin{align*}| \{ x \in \Omega : \M (| \g u |^2)(x) > N_1^2\} \cap B_r| & < | \{ x \in \Omega : \M (| \g u |^2)(x) > N_1^2\} \cap B_{5r}(y)| \\
& <\frac{\vare}{5^n} |B_{5r}| = \vare |B_r|.
\end{align*}
\end{proof}

\begin{cor}\label{induction} (cf.\cite{BW03}) Suppose that $ u \in H_0^1(\Omega) $ is the weak solution of $-{\rm{div}}(A\g u) = {\rm{div}} f $ in $ \Omega$.  Assume $\Omega = \cup_{i=0}^{K} \Omega^i$ where $ \Omega^0 \defeq {\Omega\setminus\cup_{i=1}^K\Omega^i} $ and  $\Omega^i$'s for $i = 1, \dots, K$ and $\Omega$ are $(\delta, 45)$-flat and $A=\sum_{i=0}^{i=K}A^i\chi_{\Omega^i}$ where $A_i$'s are uniformly elliptic and $(\delta, 45)$-vanishing in $\Omega^i$. Assume that
\begin{equation}~\label{induction1} | \{ x \in \Omega : \M (|\g u |^2) > N_1^2\} | < \vare |B_1|. \end{equation}
Let $k$ be a positive integer and set $\vare_1 = (\frac{10}{1-\delta})^n\vare.$ Then we have
\begin{align}~\label{inductiomn3}| \{ & x \in \Omega :  \M (|\g u |^2) > N_1^{2k}\} | \\ & \leq \sum_{i = 1}^{k} \vare_1^i|\{ x \in \Omega : \M(|f|^2) > \delta^2 N_1^{2(k-i)}| + \vare_i^k |\{ x\in \Omega : \M (|\g u | ) ^2 (x) > 1 \} |.\end{align}
\end{cor}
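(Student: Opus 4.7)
The plan is to prove this by induction on $k$, where the inductive step is a single application of the modified Vitali covering lemma (Lemma \ref{modifiedvitali}) combined with Lemma \ref{tousevitali1} applied to a rescaled solution. Define the two nested families
\[
C_k \defeq \{ x \in \Omega : \M(|\g u|^2)(x) > N_1^{2k}\}, \qquad D_k \defeq C_{k-1} \cup \{ x \in \Omega : \M(|f|^2)(x) > \delta^2 N_1^{2(k-1)}\}.
\]
The hypothesis of the corollary is exactly $|C_1| < \vare |B_1|$, and since $C_k \subset C_1$ for every $k \geq 1$, the first premise $|C_k| < \vare |B_1|$ of Lemma \ref{modifiedvitali} is automatic.

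The heart of the argument is verifying the second premise of Lemma \ref{modifiedvitali}, namely: for every $x \in B_1$ and every $r \in (0,1]$ with $|C_k \cap B_r(x)| \geq \vare |B_r(x)|$, one must have $B_r(x) \cap \Omega \subset D_k$. To see this, apply Lemma \ref{tousevitali1} not to $(u,f)$ but to the rescaled pair $(u/N_1^{k-1}, f/N_1^{k-1})$, which solves the same divergence-form equation with the same coefficient matrix $A$; the rescaling converts the thresholds in Lemma \ref{tousevitali1} into $N_1^{2k}$ for $|\g u|^2$ and $\delta^2 N_1^{2(k-1)}$ for $|f|^2$. Reading its conclusion contrapositively: if $|C_k \cap B_r(x)| \geq \vare|B_r(x)|$, then no $y \in \Omega_r(x)$ can simultaneously satisfy $\M(|\g u|^2)(y) \leq N_1^{2(k-1)}$ and $\M(|f|^2)(y) \leq \delta^2 N_1^{2(k-1)}$, which is precisely the inclusion $B_r(x) \cap \Omega \subset D_k$. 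With both hypotheses of Lemma \ref{modifiedvitali} in place, we obtain
\[
|C_k| \;\leq\; \Bigl(\tfrac{10}{1-\delta}\Bigr)^n \vare \, |D_k| \;=\; \vare_1 \, |C_{k-1}| + \vare_1 \, |\{ x \in \Omega : \M(|f|^2) > \delta^2 N_1^{2(k-1)}\}|.
\]

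Iterating this one-step bound from $k$ down to $1$ and using $C_0 = \{x \in \Omega : \M(|\g u|^2) > 1\}$ collapses the telescoping geometric series into exactly the claimed inequality
\[
|C_k| \;\leq\; \sum_{i=1}^{k} \vare_1^i \, |\{ x \in \Omega : \M(|f|^2) > \delta^2 N_1^{2(k-i)}\}| \;+\; \vare_1^k \, |C_0|.
\]
The main obstacle, as is standard in this Calder\'on--Zygmund style argument, is the scaling/contrapositive step: one must check that Lemma \ref{tousevitali1} is genuinely scale-invariant (the coefficient matrix $A$ and the Reifenberg/vanishing BMO hypotheses are unaffected by multiplying $u$ and $f$ by a constant, and the lemma already handles the geometric rescaling in $r$), and one must take some care that the rescaled ball $B_r(x)$ with $x \in B_1$, $r \in (0,1]$ still satisfies $B_{4r}(x) \subset \Omega$ needed by Lemma \ref{tousevitali1}. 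This last point is either absorbed into the statement (by enlarging $\Omega$) or handled exactly as in the interior covering already performed in Lemma \ref{tousevitali1}; no new PDE input is needed beyond that lemma.
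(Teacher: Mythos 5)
Your argument is correct and follows essentially the same strategy as the paper: both establish the one-step decay $|C_k|\le\vare_1|D_k|$ by combining the modified Vitali covering lemma (Lemma \ref{modifiedvitali}) with Lemma \ref{tousevitali1} applied to the rescaled pair $(u/N_1^{k-1}, f/N_1^{k-1})$, and both telescope (the paper phrases it as an induction that reduces level $k+1$ for $u$ to level $k$ for $u/N_1$, yours iterates the one-step bound directly, but these are the same book-keeping). The closing worry about needing $B_{4r}(x)\subset\Omega$ is unfounded --- Lemma \ref{tousevitali1} is precisely the version of the decay estimate designed to handle balls meeting $\partial\Omega$, so no further care is required beyond what you already invoke.
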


\begin{proof} We prove by induction on $k$.  For the case $ k = 1$, set
\begin{equation*}C =\{ x \in \Omega : \M  (|\g u |^2)(x) > N_1^{2} \}\end{equation*}
and
\begin{equation*}D = \{ x \in \Omega : \M  (|f |^2)(x) > \delta^{2} \} \cup \{ x \in \Omega : \M  (|\g u |^2)(x) > 1 \}. \end{equation*}

\noindent
Since $ \Omega$ is $ (\delta, 45) $-Reifenberg flat, $ \Omega $ is $(\delta, 1)$-Reifenberg flat. Then in view of (\ref{induction1}), Lemma \ref{tousevitali1} and Theorem \ref{modifiedvitali}, we see $ |C| \leq \vare_1|D|$, and so our conclusion is valid for $ k=1$.

Assume that the conclusion is valid for some positive integer $ k \geq 2. $ Set $u_1 = u / N_1$ and corresponding $f_1 = f / N_1.$ Then $u_1$ is the weak solution of
\begin{equation}~\label{div1}
\begin{cases} -\mbox{div}( A\g u_1) =  \mbox{div} f_1   \h\h \mbox{in}\h \Omega  \\ \hspace{1.5cm}u_1 = 0 \h\h\h \mbox{on} \h\d \Omega
\end{cases}\end{equation}

\noindent
and the following inequality holds:
\begin{equation*}|\{ x \in \Omega : \M  (|\g u_1 |^2)(x) > N_1^{2}\}| < \vare |B_1 |. \end{equation*}

By the induction assumption and from a simple calculation, we deduce the following estimates:
\begin{align*}|\{ x & \in \Omega : \M  (|\g u |^2)(x) > N_1^{2(k+1)}\}| \\
& = |\{ x \in \Omega : \M  (|\g u_1 |^2)(x) > N_1^{2k}\}| \\
& \leq \sum_{i = 1}^{k} \vare_1^i |\{ x \in \Omega : \M  (|f_1 |^2)(x) > \delta^2 N_1^{2(k-i)}\}| \\
& \h\h + \vare_1^k |\{ x \in \Omega : \M  (|\g u_1 |^2)(x) > 1\}| \\
& \leq \sum_{i = 1}^{k+1} \vare_1^i |\{ x \in \Omega : \M  (|f |^2)(x) > \delta^2 N_1^{2(k+1-i)}\}| \\
& \h\h + \vare_1^{k+1} |\{ x \in \Omega : \M  (|\g u |^2)(x) > 1\}|.
\end{align*}

\noindent
This estimate in turn completes the induction on $k$.
\end{proof}

Finally we are ready to prove the main theorem.
\begin{thm}Let $p$ be a real number $ 1 < p < \infty.$ Then there is a small $\delta= \delta(\Lambda, p , n, R)> 0 $ so that for all $\Omega = \cup_{i=0}^{i=K} \Omega^i$ where $\Omega^0 \defeq {\Omega\setminus\cup_{i=1}^{i=k}\Omega^i}$ and $\Omega$ and disjoint subdomains $\Omega^i$'s for $i=1,\dots,K$ are $(\delta, R)$-Reifenberg flat, for all  $A=\sum_{i=0}^{i=K}A^i\chi_{\Omega^i}$ where $A^i$'s are  $(\delta, R)$-vanishing in $ \Omega^i$ and  uniformly elliptic for $i=0,\dots,K$, and for all $f$ with $f \in L^p(\Omega, \bbr^n),$ the Dirichlet problem (\ref{div}) has a unique weak solution with the estimate
\begin{equation} \displaystyle \int_{\Omega}|\g u |^p dx \leq C \displaystyle \int_{\Omega}|f|^p dx,\end{equation}
where the constant $C$ is independent of $u$ and $f$.
\end{thm}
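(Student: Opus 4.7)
The strategy is to deduce the $W^{1,p}$ bound from the level-set estimate of Corollary~\ref{induction} by summing over dyadic levels, and then to recover existence and uniqueness by approximation together with a duality argument. By an initial rescaling of the domain I may assume $R=45$, so that Lemma~\ref{tousevitali1} and Corollary~\ref{induction} apply directly. By linearity I may further replace $(u,f)$ by $(u/\mu,f/\mu)$; choosing $\mu=\|f\|_{L^p(\Omega)}/\eta$ I normalize so that the rescaled data satisfy $\|f\|_{L^p(\Omega)}=\eta$ for a small $\eta$ to be fixed. Treat first the case $p\ge 2$. The standard $p=2$ energy estimate (available from Lax-Milgram applied to (\ref{div})) together with H\"older's inequality gives $\|\g u\|_{L^2(\Omega)}\le C\eta$, so the weak $(1,1)$ bound of Theorem~\ref{pp11} applied to $|\g u|^2$ forces $|\{\M(|\g u|^2)>N_1^2\}|<\vare|B_1|$ provided $\eta$ is small enough; this verifies the hypothesis (\ref{induction1}) of Corollary~\ref{induction}. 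Having fixed $\vare$ (and hence $\delta$) so that $\vare_1 N_1^p<\tfrac12$, I multiply that corollary's conclusion by $N_1^{kp}$, sum over $k\ge 1$, and interchange the two sums to obtain
$$\sum_{k\ge 1}N_1^{kp}\,|\{\M(|\g u|^2)>N_1^{2k}\}|\;\le\;C_1\sum_{j\ge 0}N_1^{jp}\,|\{\M(|f|^2)>\delta^2 N_1^{2j}\}|+C_2|\Omega|.$$
Lemma~\ref{lplevel} applied with exponent $p/2$ and multiplier $m=N_1^2$ identifies both sides (up to additive multiples of $|\Omega|$) with the $L^{p/2}$ norms of $\M(|\g u|^2)$ and $\M(|f|^2)$; the strong $(p/2,p/2)$ bound of Theorem~\ref{pp11} (valid since $p/2>1$) gives $\|\M(|f|^2)\|_{L^{p/2}}^{p/2}\le C\|f\|_{L^p}^p$. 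Since $|\g u|\le\M(|\g u|^2)^{1/2}$ almost everywhere, I conclude that the normalized solution obeys $\|\g u\|_{L^p}^p\le C(\|f\|_{L^p}^p+|\Omega|)$, and undoing the rescaling $(u,f)\mapsto(\mu u,\mu f)$ yields $\|\g u\|_{L^p}^p\le C(1+|\Omega|/\eta^p)\|f\|_{L^p}^p$, which is the desired estimate (\ref{result}).

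For $1<p<2$ I argue by duality. The transpose $A^T=\sum_i(A^i)^T\chi_{\Omega^i}$ is again uniformly elliptic with small BMO seminorm in the same Reifenberg geometry, so the $W^{1,q}$ estimate just proved with $q=p/(p-1)>2$ applies to the adjoint Dirichlet problem $-\mathrm{div}(A^T\g w)=\mathrm{div}\,g$ for any $g\in L^q(\Omega,\bbr^n)$. Using $w$ as test function in the equation for $u$ and $u$ as test function in the adjoint equation yields $\int_\Omega g\cdot\g u\,dx=\int_\Omega f\cdot\g w\,dx$; H\"older's inequality and the adjoint estimate furnish $\bigl|\int g\cdot\g u\bigr|\le C\|f\|_{L^p}\|g\|_{L^q}$, and taking $\sup_{\|g\|_{L^q}\le 1}$ delivers $\|\g u\|_{L^p}\le C\|f\|_{L^p}$. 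Existence for general $p$ follows by approximating $f\in L^p$ by $f_k\in L^2$, solving via Lax-Milgram to obtain $u_k\in W^{1,2}_0$, and passing to the limit using the a priori estimate just established; uniqueness in $W^{1,p}_0$ is an immediate consequence of the same duality pairing.

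The main technical obstacle has in fact already been absorbed into the preliminary lemmas: Lemma~\ref{tousevitali1} and Corollary~\ref{induction} require a delicate balancing of the small parameters $\vare$, $\delta$, and $N_1$ so that the Vitali-inflated factor $\vare_1=(10/(1-\delta))^n\vare$ satisfies $\vare_1 N_1^p<1$; once this is arranged, the passage from a distribution-function bound to an $L^p$ bound is essentially bookkeeping plus a standard duality pairing. The one subtlety worth flagging is that the initial normalization step truly does yield (\ref{induction1})---this uses the $p=2$ energy estimate and the boundedness of $\Omega$---which is precisely what allows the final constant $C$ to be chosen independently of the distance between the interior subdomains $\Omega^i$, the novel qualitative feature of the theorem.
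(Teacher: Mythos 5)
Your proposal is correct and takes essentially the same route as the paper: both reduce to Corollary~\ref{induction}, sum the resulting geometric level-set estimate after fixing $\vare$ so that $\vare_1 N_1^p<1$, invoke Lemma~\ref{lplevel} and the strong $(p/2,p/2)$ maximal-function bound to convert distribution functions into $L^{p/2}$ norms, and handle $1<p<2$ by the identical duality pairing with the transposed operator $A^T$. Your write-up is somewhat more explicit than the paper about two points that the paper leaves implicit: the precise normalization $(u,f)\mapsto(u/\mu,f/\mu)$ needed so that (\ref{induction1}) and the smallness of $\sum_k N_1^{pk}|\{\M(|f|^2)>\delta^2N_1^{2k}\}|$ both hold (verified via the energy estimate and weak $(1,1)$), and the approximation argument yielding existence of the solution in $W^{1,p}_0$ from the a priori bound; these are welcome clarifications but do not constitute a different method.
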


\begin{proof}First we will consider the case $ p> 2$. The case $ p =2 $ is classical and the case $ 1 < p < 2$ will be proved using duality. Without loss of generality, we assume that
\begin{equation}~\label{flpsmall} \|f\|_{L^p(\Omega)} \mbox{ is small enough}\end{equation}
and $$ | \{ x \in \Omega : \M (|\g u |^2) > N_1^2\} | < \vare |B_1|$$

\noindent
by multiplying the PDE (\ref{div}) by a small constant depending on $\|f\|_{L^2(\Omega)} $ and $\| \g u\|_{L^2(\Omega)}$. Since $ f \in L^p(\Omega), \M(|f|^2) \in L^{p/2}(\Omega) $ by strong p-p estimates. In view of Lemma \ref{lplevel}, there is a constant $C$ depending only on $\delta, p,$ and $N_1$ such that

\begin{equation} \sum_{k=0}^{\infty} N_1^{pk} | \{ x \in \Omega : \M (| f |^2)(x) > \delta^2 N_1^{2k}\} \leq C\|\M(|f|^2)\|_{L^{p/2}(\Omega)}^{p/2}. \end{equation}
Then this esitmate, strong p-p estimates, and (\ref{flpsmall}) imply
\begin{equation}~\label{flpless} \sum_{k=0}^{\infty} N_1^{pk} | \{ x \in \Omega : \M (| f |^2)(x) > \delta^2 N_1^{2k}\} \leq 1. \end{equation}

Now we will claim that $ \M(|\g u |^2) \in L^{p/2}$ by using Lemma \ref{lplevel} when $f = \M(|\g u |^2) $ and $ m = N_1^2.$ Let us compute
\begin{align*}
&\sum_{k=0}^{\infty}  N_1^{pk} | \{ x \in \Omega : \M (| \g u  |^2)(x) >  N_1^{2k}\}| \\
&\leq \sum_{k=1}^{\infty} N_1^{pk} \left( \sum_{i = 1}^{k} \vare_1^i| \{ x \in \Omega : \M (| f |^2)(x) > \delta^2 N_1^{2(k-i)}\}| + \vare_1^k  | \{ x \in \Omega : \M (| \g u |^2)(x) > 1\} | \right)\\
& =  \sum_{i=1}^{\infty} (N_1^{p}\vare_1)^i \left( \sum_{k = i}^{\infty} N_1^{p(k-i)}| \{ x \in \Omega : \M (| f |^2)(x) > \delta^2 N_1^{2(k-i)}\}| \right)\\
&\hspace{1.7cm} + \sum_{k = 1}^{\infty} (N_1^p \vare_1)^k  | \{ x \in \Omega : \M (| \g u |^2)(x) > 1\} |\\
& \leq  C \sum_{k = 1}^{\infty} (N_1^p \vare_1)^k < + \infty,
\end{align*}
where we used Corollary \ref{induction} and (\ref{flpless}). Also we can choose $ \vare_1$ so that $N_1^p\vare_1 < 1 $ since $N_1 $ is a universal constant depending on the dimension and ellipticity. So we can take $ \vare$, and find the corresponding $\delta > 0,$ also $\vare_1$. By this estimate and Lemma \ref{lplevel}, $\M(|\g u |^2) \in L^{p/2}(\Omega)$. Thus $\g u  \in L^{p}(\Omega).$

Now suppose that $1<p<2$. For any $g \in L^q(\Omega, \bbr^n)$ and $A^T$, a transpose matrix of $A$, consider the following equation.
\begin{equation}\label{divdual}
\begin{cases} -\mbox{div}( A^T (x)\g v(x)) =  \mbox{div}  g \hspace{1cm} \mbox{in}\h \Omega  \\ \hspace{1.7cm} v = 0 \h\h\h\h\h\h\h\h\mbox{on} \h \d \Omega
\end{cases}\end{equation}

\noindent
Then 
\begin{align*}
\displaystyle\int_{\Omega} f \g v dx  & = - \displaystyle\int_{\Omega}  \mbox{div} f v dx = \displaystyle\int_{\Omega}  \mbox{div} (A \g u ) v dx \\
&=-\displaystyle\int_{\Omega} (A\g u)(\g v) dx = -\displaystyle\int_{\Omega}  \g u (A^T \g v) dx \\ 
&= \displaystyle\int_{\Omega} u \mbox{div}(A^T \g v) dx =\displaystyle\int_{\Omega} u (- \mbox{div} g) dx = \displaystyle\int_{\Omega} \g u g dx. 
\end{align*}

\noindent
By above, note that $\| \g v \|_{L^q} \leq C \|g\|_{L^q},$
\begin{align*}
\| \g u \|_{L^p(\Omega)} & = \sup_{0\neq g \in L^q(\Omega)} \frac{|\displaystyle \int_{\Omega}\g u g |}{\|g\|_{L^q(\Omega)}} \leq \frac{|\displaystyle \int_{\Omega}\g v f |}{\|g\|_{L^q(\Omega)}}\\
&\leq \frac{\|\g v \|_{L^q}\|f\|_{L^p}}{\| g\|_{L^q}} \leq C\|f\|_{L^p},
\end{align*}
\noindent
which completes the proof.
\end{proof}


\end{document}